\documentclass[reqno, final,a4paper]{amsart}
\usepackage{color}
\usepackage[colorlinks=true,allcolors=blue
]{hyperref}
\usepackage{amsmath, amssymb, amsthm}
\usepackage{mathrsfs}
\usepackage{mathtools}
\usepackage[noabbrev,capitalize,nameinlink]{cleveref}
\crefname{equation}{}{}
\usepackage{fullpage}
\usepackage[noadjust,nocompress]{cite}
\usepackage{graphics}
\usepackage{pifont}
\usepackage{tikz}
\usepackage{bbm}
\usepackage[T1]{fontenc}
\usetikzlibrary{arrows.meta}

\usepackage{environ}
\usepackage{framed}
\usepackage{url}
\usepackage[linesnumbered,ruled,vlined]{algorithm2e}
\usepackage[noend]{algpseudocode}
\usepackage[labelfont=bf]{caption}
\usepackage{framed}
\usepackage[framemethod=tikz]{mdframed}
\usepackage{pgfplots}
\usepackage{appendix}
\usepackage{graphicx}
\usepackage[textsize=tiny]{todonotes}
\usepackage{tcolorbox}
\usepackage{xcolor}
\usepackage{flafter}
\usepackage[shortlabels]{enumitem}
\usepackage{subcaption}
\crefformat{enumi}{#2#1#3}
\crefrangeformat{enumi}{#3#1#4 to~#5#2#6}
\crefmultiformat{enumi}{#2#1#3}
{ and~#2#1#3}{, #2#1#3}{ and~#2#1#3}
\allowdisplaybreaks

\usepackage{nicematrix}   
\usetikzlibrary{calc} 
\usepackage{tikz}
\usetikzlibrary{fit}

\let\originalleft\left
\let\originalright\right
\renewcommand{\left}{\mathopen{}\mathclose\bgroup\originalleft}
\renewcommand{\right}{\aftergroup\egroup\originalright}

\newcommand*{\claimproofname}{Proof of claim}
\newenvironment{claimproof}[1][\claimproofname]{\begin{proof}[#1]}{\end{proof}}

\crefname{algocf}{Algorithm}{Algorithms}

\crefname{equation}{}{}
 
\AtBeginEnvironment{appendices}{\crefalias{section}{appendix}} 

\usepackage[color]{showkeys}

\colorlet{refkey}{orange!20}
\colorlet{labelkey}{blue!30}

\crefname{algocf}{Algorithm}{Algorithms}

\numberwithin{equation}{section}

\newtheorem{theorem}{Theorem}[section]

\newtheorem{lemma}[theorem]{Lemma}
\newtheorem{claim}[theorem]{Claim}

\newtheorem*{claim*}{Claim}

\theoremstyle{definition}
\newtheorem{fact}[theorem]{Fact}
\newtheorem{definition}[theorem]{Definition}

\newtheorem*{definition*}{Definition}

\theoremstyle{remark}
\newtheorem{remark}[theorem]{Remark}

\crefname{theorem}{Theorem}{Theorems}

\crefname{remark}{Remark}{Remarks}
\creflabelformat{remark}{#2~#1#3}

\crefname{fact}{Fact}{Facts}
\creflabelformat{fact}{#2~#1#3}

\crefname{claim}{Claim}{Claims}
\creflabelformat{claim}{#2~#1#3}

\crefname{lemma}{Lemma}{Lemmas}
\creflabelformat{lemma}{#2~#1#3}

\crefname{definition}{Definition}{Definitions}
\creflabelformat{definition}{#2~#1#3}

\crefname{conjecture}{Conjecture}{Conjectures}
\creflabelformat{conjecture}{#2~#1#3}

\crefname{corollary}{Corollary}{Corrolaries}
\creflabelformat{corollary}{#2~#1#3}

\newcommand{\mb}{\mathbb}
\newcommand{\mbf}{\mathbf}

\newcommand{\mc}{\mathcal}

\newcommand{\mr}{\mathrm}

\newcommand{\on}{\operatorname}

\newcommand{\pvec}[1]{\vec{#1}\mkern2mu\vphantom{#1}'}

\newcommand{\cA}{\mathcal{A}}

\newcommand{\cC}{\mathcal{C}}

\newcommand{\cF}{\mathcal{F}}

\newcommand{\cH}{\mathcal{H}}
\newcommand{\cI}{\mathcal{I}}
\newcommand{\cJ}{\mathcal{J}}

\newcommand{\cL}{\mathcal{L}}

\newcommand{\cP}{\mathcal{P}}
\newcommand{\cQ}{\mathcal{Q}}
\newcommand{\cR}{\mathcal{R}}
\newcommand{\cS}{\mathcal{S}}
\newcommand{\cT}{\mathcal{T}}
\newcommand{\cU}{\mathcal{U}}
\newcommand{\cV}{\mathcal{V}}

\newcommand{\cX}{\mathcal{X}}
\newcommand{\cY}{\mathcal{Y}}

\newcommand{\sC}{\mathscr{C}}

\newcommand{\sR}{\mathscr{R}}
\newcommand{\sS}{\mathscr{S}}

\newcommand{\bfB}{\mathbf{B}}

\newcommand{\bfG}{\mathbf{G}}
\newcommand{\bfH}{\mathbf{H}}

\newcommand{\bfL}{\mathbf{L}}

\newcommand{\bfP}{\mathbf{P}}

\newcommand{\bfR}{\mathbf{R}}
\newcommand{\bfS}{\mathbf{S}}
\newcommand{\bfT}{\mathbf{T}}

\newcommand{\bfX}{\mathbf{X}}

\newcommand{\bfZ}{\mathbf{Z}}

\newcommand{\bfz}{\mathbf{z}}
\newcommand{\bflambda}{\boldsymbol{\lambda}}
\newcommand{\bfmu}{\boldsymbol{\mu}}
\newcommand{\bfnu}{\boldsymbol{\nu}}

\DeclareMathOperator{\Li}{Li}
\DeclareMathOperator{\supp}{supp}

\newcommand{\unif}{\operatorname{Unif}}
\newcommand{\trp}{\operatorname{TRP}}
\newcommand{\dens}{\operatorname{dens}}
\newcommand{\Tbad}{\operatorname{\cT^{\textnormal{bad}}}}
\newcommand{\Ubad}{\cU^{\textnormal{bad}}}

\renewcommand{\Pr}{\mb P}
\newcommand{\E}{\mb E}

\allowdisplaybreaks
\title{On the probability that a random Latin square is all-even}

\author[Greenhill]{Catherine Greenhill}
\address{School of Mathematics and Statistics, UNSW Sydney, NSW 2052, Australia}
\email{c.greenhill@unsw.edu.au}

\author[Kopfova]{Lenka Kopfov\'a}
\address{Institute of Science and Technology Austria (ISTA). Am Campus 1, 3400 Klosterneuburg, Austria}
\email{lenka.kopfova@ist.ac.at}

\author[Kwan]{Matthew Kwan}
\address{Institute of Science and Technology Austria (ISTA). Am Campus 1, 3400 Klosterneuburg, Austria}
\email{matthew.kwan@ist.ac.at}

\author[Petrova]{Kalina Petrova}
\address{Institute of Science and Technology Austria (ISTA). Am Campus 1, 3400 Klosterneuburg, Austria}
\email{kalina.petrova@ist.ac.at}

\thanks{CG was supported by ARC Discovery Project DP250101611.
LK, MK and KP were supported in part by ERC Starting Grant ``RANDSTRUCT'' No.~101076777. KP was supported by the European Union’s Horizon 2020 research and innovation programme
under the Marie Skłodowska-Curie grant agreement No.~101034413 \includegraphics[width=4mm]{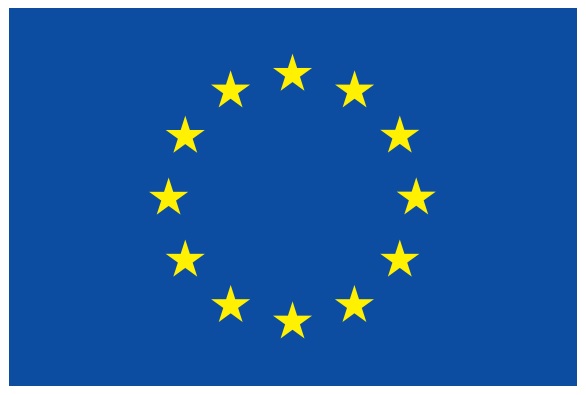}.
}
\begin{document}

\begin{abstract}
In a Latin square, every row can be interpreted as a permutation, and therefore has a parity (even or odd). We prove that in a uniformly random $n\times n$ Latin square, the probability that all rows are even is of the form $2^{-n+o(n)}$. This fills a gap left by recent work of Kwan, Petrova and Sawhney: they proved that the row parities behave like a sequence of independent coin flips in a variety of different senses, but they were not able to lower-bound the all-even probability, as their approach had a small number of ``exceptional rows'' whose parities they could not control. 

In fact, we prove a more general theorem on the probability of any possible prescription of row parities, column parities and symbol parities. Our approach extends the ideas of Kwan, Petrova and Sawhney, introducing a new way to ``bootstrap'' intercalate switchings to control more complex switching operations.
\end{abstract}

\maketitle

\section{Introduction}

A \emph{Latin square} of order $n$ is an $n\times n$ array filled with $n$ distinct symbols, such that each symbol appears exactly once in each row and each column. The study of Latin squares is one of the oldest areas of combinatorics; see \cite{KD15} for a broad introduction to the subject.

A natural source of examples of Latin squares is multiplication tables of groups. One may therefore imagine that Latin squares are necessarily very ``symmetrical'' objects with highly restricted structure, but an increasing body of evidence has shown that Latin squares can have almost arbitrary structure. In fact, it seems that one can quite effectively model a \emph{uniformly random} order-$n$ Latin square in one of several quite na\"ive ways: for example, we could imagine that the $n$ rows are each independent uniformly random order-$n$ permutations, or even more crudely we could imagine that each row/column/symbol triple is present with probability $1/n$ independently. However, it turns out to be exceedingly difficult to rigorously justify these types of heuristics.

Perhaps the first investigation along these lines concerned \emph{parities} in random Latin squares: every row of a Latin square is a permutation, which can be even or odd. These parities play quite an important role in the study of Latin squares, most notably in connection with the so-called \emph{Alon--Tarsi conjecture} (see e.g.\ \cite{FM19}). A random permutation is odd with probability $1/2$, so heuristically we might imagine that for a random Latin square, the parities of the rows behave like a sequence of independent coin flips. In particular, we might imagine that the probability all rows are even is about $2^{-n}$. A conjecture to this effect was made by Peter Cameron in 1993 (see \cite{Cam01}), and the first result in this direction was proved shortly after by H\"aggkvist and Janssen~\cite{HJ96}: namely, they showed that the all-even probability is at most $(\sqrt{3/4} + o(1))^n \approx 0.87^n$. Recently, Kwan, Petrova and Sawhney~\cite{kwan2025parities} showed that the parities of the rows do behave like a sequence of independent coin flips, in a variety of different senses; in particular, their work implies that the all-even probability is at most $(1/2+o(1))^n$. See the introduction of \cite{kwan2025parities} for much more discussion of the history of Cameron's conjecture and its surrounding context.

However, until now there has been no nontrivial \emph{lower bound} on the all-even probability (for good reasons, as we will discuss shortly). In this paper we fill this gap, proving the following theorem.

\begin{theorem}\label{thm:all-even}
Let $\bfL$ be a random Latin square, sampled uniformly from all order-$n$ Latin squares with symbols $1,\dots,n$. Then the probability that all rows of $\bfL$ are even permutations is $(1/2+o(1))^n$.
\end{theorem}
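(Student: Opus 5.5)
The upper bound $(1/2+o(1))^n$ is already provided by \cite{kwan2025parities}, so the task is the lower bound $\Pr[\text{all rows even}]\ge 2^{-n-o(n)}$. I would follow their switching approach and add an extra step that deals with the exceptional rows.

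The basic tool is the intercalate switching. If $L$ contains an intercalate (a $2\times 2$ Latin subsquare) in rows $r_1,r_2$, then swapping its two symbols flips the parities of exactly rows $r_1,r_2$, and similarly for columns and symbols. Kwan, Petrova and Sawhney show that, outside a set of $o(n)$ exceptional rows, the parity vector behaves like independent fair coins. Concretely, I would take their statement in the form: for a typical $L$ there is a set $X$ of $o(n)$ rows such that, conditioned on the parities of rows in $X$ (and on suitable structure), every parity pattern on the remaining rows has probability $2^{-(n-|X|)+o(n)}$. The reason rows in $X$ are uncontrolled is that they may lie in few intercalates, or the switching graph restricted to them is degenerate.

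The plan has three steps.
\begin{enumerate}
\item Show that with probability at least $2^{-o(n)}$ the rows in $X$ can be made even at cost $2^{-o(n)}$. Rather than conditioning, I would build a more complex switching that flips the parity of a prescribed exceptional row $r$ together with one non-exceptional row $r'$. Such a switching could be a cycle switching along a row-pair $(r,r')$: exchanging the entries of $r$ and $r'$ on a cycle of the permutation $r^{-1}r'$ changes both parities by the parity of the cycle length minus one. A cycle of even length gives the desired flip of exactly those two rows.
\item Compare the number of squares before and after each such operation. A single cycle switching is injective up to a factor of at most $n^{O(1)}$, so applying $|X|=o(n)$ of them changes probabilities by at most $n^{O(|X|)}=2^{o(n)\log n}$. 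This is too lossy unless $|X|=o(n/\log n)$. I would therefore either sharpen the exceptional set bound or bootstrap intercalate switchings: use the abundance of intercalates to show that row-cycle switchings are typically \emph{invertible} with near-uniform multiplicity. Then each costs only a factor of $O(1)$, and the total cost is $2^{O(|X|)}=2^{o(n)}$.
\item After fixing the exceptional rows to be even and pushing the compensating flips onto controlled rows, apply the near-uniformity on the controlled rows to obtain probability $2^{-(n-|X|)-o(n)}$ that they are all even. Multiplying the two bounds gives the lower bound $2^{-n-o(n)}$.
\end{enumerate}

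The main obstacle is step 2. We must show that in a random Latin square, every row pair (or enough pairs involving each exceptional row) has an even-length cycle switching whose reverse count is comparable to its forward count. Without such control, the exceptional rows could be correlated in a way that forces an odd parity. To get this, I would first try using intercalate switchings to randomize the cycle structure of $r^{-1}r'$, and then estimate the cycle counts by double counting.
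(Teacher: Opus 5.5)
Your high-level framework matches the paper's: take the upper bound from \cite{kwan2025parities}, use intercalate switchings for the controlled rows, and switch an even-length cycle of $\sigma(r,r')$ with a controlled partner $r'$ for each exceptional row $r$. However, you have put the difficulty in the wrong place, and the step that genuinely needs a new idea is missing.

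Step~2 is not an obstacle. The exceptional set coming from the intercalate-expander argument has size at most $\log^{11}n$ (\cref{lem:few_stubborn_rows_cols_symbols}), so your first option already holds. The crude reverse count (guess the exceptional rows, a partner and a cycle for each, and which cycles were switched) costs only $n^{O(\log^{11}n)}=2^{o(n)}$, so no near-uniform invertibility of cycle switchings is needed.

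The real gap is the premise of Step~1: that each exceptional $r$ has a usable partner $r'$ for which $\sigma(r,r')$ contains a \emph{short} even-length cycle. Shortness matters because the switched cycles must touch few entries, or they destroy the intercalates controlling the other rows. Nothing forces such a cycle to exist: for a given $r$, every $\sigma(r,r')$ could have only long cycles, or only odd ones. Moreover, $r$ is exceptional precisely because the available permanent-based technology says nothing about a specific row. So ``use intercalate switchings to randomize the cycle structure of $r^{-1}r'$ and double count'' has no mechanism behind it.

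The missing idea is to perform intercalate switchings only on the partner (``mate'') row, never on the exceptional one, and to treat the exceptional row as an arbitrary fixed $1\times n$ rectangle $D$. \Cref{lem:chains} shows that with probability $1-\exp(-\omega(n^2/\log^{2000}n))$, simultaneously for every $D$ (and all small forbidden sets of columns and symbols), there are $n/\log^{1000}n$ rows $r$ with the following property: at most two disjoint intercalates through $r$ create an even cycle of length at most $n/\log^{100}n$ in $\sigma(D,L[r])$. If there is a long cycle, one intercalate excises a short segment, and if that segment is odd, a second intercalate splits it. If all cycles are short and odd, one intercalate merges two of them. This failure probability is small enough to union-bound over all $n!$ choices of $D$, which is exactly what makes the uncontrollable row harmless. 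Finding intercalates in prescribed columns relative to already-revealed rows needs \cref{lem:many-intercalates}. That lemma rests on the new approximation lemma for uniformly random completions of a fixed Latin rectangle (\cref{lem:random_LS_to_triangle_removal}).

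Finally, your Step~3 (``apply the near-uniformity on the controlled rows'' to the modified square) ignores interference between the two kinds of switching. The paper handles this in three ways. The repairing intercalates are taken disjoint from the random template that hosts the stable intercalates. It uses a \emph{robust} parity generator that survives deleting all intercalates meeting the short cycles. And it orders the switches as repairing intercalates, then stable intercalates (with the anticipated cycle flips built in), then the cycles, in a layered switching graph whose middle layer is compared componentwise.
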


Actually, \cref{thm:all-even} is a special case of a more general theorem (\cref{thm:main}), which says that for any specification of the parities of the row permutations, the parities of the column permutations, and the parities of the ``symbol permutations'' (satisfying a certain necessary identity, see \cref{eq:subspace}), the probability that these parities arise is $(1/2+o(1))^{3n}$. Before this work, it was not clear that for all such parity specifications a Latin square actually exists!

We will outline our proof approach in some detail in \cref{subsec:outline}, but it is worth saying a few words here about why the lower bound is harder than the upper bound (and more generally, why it is so hard to prove almost anything about random Latin squares). The most obvious way to study random Latin squares is by direct enumeration (i.e., we might hope to estimate the number of all-even order-$n$ Latin squares and divide by the total number of order-$n$ Latin squares). Unfortunately, enumerating Latin squares is a notoriously difficult task, and state-of-the-art bounds (via so-called \emph{permanent estimates}) are nowhere near good enough for this. Instead, all work on parities in random Latin squares has proceeded via \emph{switching} methods, which study certain parity-changing operations to compare the numbers of Latin squares with each parity (without actually needing to know the total number of Latin squares). 

In \cite{kwan2025parities}, the approach was to first use crude enumeration estimates to establish that random Latin squares typically have a rich constellation of \emph{intercalates} ($2\times 2$ Latin subsquares), which permit very useful parity-changing operations. Unfortunately, this type of enumeration-based approach has fundamental quantitative limitations that prevent us from ruling out the possibility that there are a small number of rows which do not interact with any intercalates at all, and we do not have any control over the parities of these rows. This is fine for an upper bound, if we tolerate a $2^{o(n)}$ error, but it is unacceptable for a lower bound. To overcome this, we introduce a way to use intercalate switchings to ``bootstrap'' to more complex switching operations that can be used to control the parities of \emph{all} the rows.

\subsection{Main theorem}
For a Latin square $L$, let $\vec \xi_{\mr{row}}( L)\in \mb F_2^n$ be the sequence of parities of the row permutations 
(where $0$ stands for even and $1$ stands for odd)\footnote{Throughout this paper we identify $\mb F_2$ with $\mb Z/2\mb Z$ in the obvious way.}, and let $\vec \xi_{\mr{col}}(L)\in \mb F_2^n$
be the sequence of parities of the column permutations. We also define a third sequence $\vec \xi_{\mr{sym}}( L)\in \mb F_2^n$ describing the sequence of parities of the \emph{symbol permutations}. Perhaps
the most natural way to define a symbol permutation is to reinterpret
a Latin square in a way that emphasises the natural symmetry between
rows, columns, and symbols: indeed, observe that an $n\times n$ Latin
square (with symbols $1,\dots,n$) can be interpreted as an $n\times n\times n$
array in which every entry is 0 or 1, such that every
two-dimensional ``slice'' of this array is a permutation matrix\footnote{Specifically, if the Latin square has symbol $s$ in
the $(i,j)$-entry, then the $(i,j,s)$-entry
of the corresponding $n\times n\times n$ array is set to 1.}. So, the row and column permutations correspond to slices in two
of the three possible directions, and the symbol permutations correspond
to slices in the third direction.\footnote{Note that each such permutation matrix could be interpreted as two different permutations which are inverses of each other. Since a permutation and its inverse have the same parity, this choice does not affect our result and proofs. Still, for concreteness, we think of row permutations as mapping columns to symbols; column permutations as mapping rows to symbols; and symbol permutations as mapping rows to columns.}

The sequences $\vec \xi_{\mr{row}}( L),\vec \xi_{\mr{col}}( L),\vec \xi_{\mr{sym}}( L)$ cannot vary completely independently: it turns out that they must lie in a certain affine subspace. Indeed, writing $|\vec \xi|_1$ for the number of 1-entries in a sequence $\vec \xi\in \mb F_2^n$, it was proved by Janssen~\cite{Jan95} and Zappa~\cite{Zap96} that for any Latin square $L$ of order $n$ we have
\begin{equation}
    |\vec \xi_{\textnormal{row}}(L)|_1+|\vec \xi_{\textnormal{col}}(L)|_1+|\vec \xi_{\textnormal{sym}}(L)|_1 \equiv f(n) \pmod 2,\label{eq:subspace}
\end{equation}
where 
\[
f(n)=
\begin{cases}
0 & \text{if }n\equiv0 \text{ or }1\pmod 4\\
1 & \text{if }n\equiv2 \text{ or }3\pmod 4.
\end{cases}
\]
Our main result is the following theorem, which shows that all possibilities for $(\vec\xi_{\textnormal{row}}(L),\vec\xi_{\textnormal{col}}(L),\vec\xi_{\textnormal{sym}}(L))$, satisfying \cref{eq:subspace}, are roughly equally likely to occur.
\begin{theorem}
\label{thm:main}
Let $\bfL$ be a uniformly random $n\times n$ Latin square. Then for any $\vec r,\vec c,\vec s\in \mb F_2^n$ satisfying $|\vec r|_1+|\vec c|_1+|\vec s|_1\equiv f(n) \pmod 2$ we have
\[\mathbb{P}\big[(\vec\xi_{\textnormal{row}}(\mbf L),\vec\xi_{\textnormal{col}}(\mbf L),\vec\xi_{\textnormal{sym}}(\mbf L))=(\vec r,\vec c, \vec s)\big]= 2^{-3n+o(n)}.\]
\end{theorem}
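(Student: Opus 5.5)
The plan is to compare the probabilities of different parity vectors via switchings, rather than to enumerate Latin squares directly. Write $V\subseteq\mb F_2^{3n}$ for the affine subspace cut out by \cref{eq:subspace}. It has $2^{3n-1}$ elements, so by pigeonhole some $(\vec r_0,\vec c_0,\vec s_0)\in V$ has probability at least $2^{-3n+1}$.

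It then suffices to show that for any two targets $\vec x,\vec y\in V$,
\[
\Pr[\vec\xi(\bfL)=\vec y]\ge 2^{-o(n)}\,\Pr[\vec\xi(\bfL)=\vec x].
\]
This lower bound then implies the matching upper bound: every element of $V$ has probability at least $2^{-o(n)}\max_V$, and these probabilities sum to $1$, so all of them are $2^{-3n+o(n)}$. (The upper bound also essentially follows from \cite{kwan2025parities}.)

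To compare $\vec x$ and $\vec y$, change one coordinate pair at a time. The basic move is an intercalate switch: swapping the two symbols in a $2\times2$ Latin subsquare. This flips the parity of exactly two rows, two columns and two symbols, so it preserves \cref{eq:subspace}. Any element of $V$ differs from any other by a sum of vectors of the form $e_{\text{row }i}+e_{\text{row }i'}$, together with analogous column/symbol pairs and mixed pairs such as $e_{\text{row }i}+e_{\text{col }j}$. The aim is to realise each such elementary flip by a switching $\Phi$ that changes the parity vector by exactly that elementary vector.

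For such a switching we need a double-counting estimate. Among squares with parity vector $\vec x$, a typical one should admit at least $N$ applications of $\Phi$. Conversely, every square with parity vector $\vec y$ should have at most $N\,2^{o(n)/n}$ preimages, i.e.\ at most $N e^{o(1)}$. Chaining $O(n)$ such steps then costs only $2^{o(n)}$ overall, provided each step loses a factor of only $1+o(1)$ in the ratio. Alternatively, one can flip many coordinates simultaneously with a single composite switching and count $\binom{n}{k}$-type multiplicities, which avoids compounding the losses.

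To get the typical lower bound, I would use the crude results of \cite{kwan2025parities}. A random Latin square has, with probability $1-e^{-\omega(n)}$, about $n^2/4$ intercalates spread roughly evenly, except possibly for a small set of exceptional rows, columns and symbols. The atypical squares carry negligible mass relative to $2^{-3n}$ only if their probability is $e^{-\omega(n)}$, so these concentration estimates must be used at that strength.

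The main obstacle is the exceptional lines, which may meet no intercalate at all. For these I would bootstrap. Take an exceptional row $i$. Choose a generic cell $(i,j)$ and a longer cycle structure instead, for instance a row-cycle between row $i$ and some typical row $i'$ of length $3$ or more, whose switching flips both rows' parities. Then correct any unwanted column or symbol flips using intercalate switchings among typical lines, which are abundant and well distributed. The key counting claim is that, for every row, including exceptional ones, the number of such composite switchings lies in a window of width $n^{O(1)}$ uniformly over typical squares. Equivalently, we need an $n^{O(1)}$ bound on how the reverse count can exceed the forward count. Since there are only $o(n)$ exceptional lines, a polynomial loss for each of them totals $2^{o(n)}$, which is affordable.

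Establishing this uniform two-sided count for the composite moves is where the real work lies. I would attempt it by showing that the intercalate corrections can be chosen from a set whose size is insensitive to the initial cycle switch, since the switch alters only $O(n)$ cells.
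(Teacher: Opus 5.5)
Your outer framework is sound: pigeonhole over the $2^{3n-1}$ admissible vectors, a total budget of $2^{o(n)}$, and a polynomial loss per exceptional line, which is affordable. But both places where the actual work lies are missing, and the first is a genuine missing idea.

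\textbf{Existence of a usable cycle for an exceptional row.} For an exceptional row $i$ you simply posit a cycle of $\sigma_L(i,i')$, for some typical $i'$, whose switch flips both row parities, and then ask for an $n^{O(1)}$ window on the number of such moves. Nothing guarantees that even one exists. Switching a row cycle of length $m$ changes each of the two row parities by $m-1$, so a $3$-cycle, contrary to what you write, leaves both row parities unchanged. You therefore need an even cycle, and it should be short so that it does not interfere with the intercalates used for the corrections. For an exceptional row it is entirely possible that, for every $i'$, the permutation $\sigma_L(i,i')$ has only long cycles or only short odd cycles. No ``typical square'' statement can rule this out: the transference from random Latin squares to random hypergraphs loses a factor $\exp(O(n\log^2 n))$, so it only reaches aggregate events and never a fixed row.

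The paper's key idea is to \emph{create} the cycle rather than find it. Pair each stubborn line $z_1$ with a non-stubborn mate $z_2$. Then switch at most two intercalates through $z_2$ (avoiding $z_1$ and $\bfT$) that either split a long cycle of $\sigma(L[z_1],L[z_2])$ or merge two short odd ones (\cref{fact:intercalate-switching}). The existence of such repairs is \cref{lem:chains}, proved for all $n!$ possible patterns $D$ of the stubborn line at once by a union bound. This needs a per-pattern failure probability far below $1/n!$. The paper obtains $\exp(-\Omega(n^2/\log^{1660}n))$ from a new approximation lemma for random completions of a \emph{fixed} Latin rectangle (\cref{lem:random_LS_to_triangle_removal}, \cref{lem:many-intercalates}). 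One reveals the top rows, reads off their cycle structure relative to $D$, and then finds intercalates meeting prescribed column sets. Your closing remark, that corrections should come from a set insensitive to the cycle switch, concerns the correction count, not the existence of the cycle.

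\textbf{Two-sided counting for the typical lines.} The counting is not achievable in the form you state. A per-step ratio $1+o(1)$ for flipping specified coordinates would require counting intercalate configurations through specified rows, which are $O(1)$ in number and uncontrolled. Nor can you assert that every square with parity vector $\vec y$ has at most $N2^{o(1)}$ preimages, since you know nothing about that class yet.

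The paper avoids fine counting altogether:
\begin{itemize}
\item It sparsifies with a random template $\bfT$ and uses the stable intercalates of $\bfT\cap\bfL$. Their set is invariant under switching them (\cref{lem:canonicity_of_stable_intercalates}), and they form a robust parity generator.
\item On each connected component of the switching graph the incidence matrix $M$ is therefore constant. Forward degrees equal $|\ker M|$ and reverse degrees are at most $2^{3n}|\ker M|$, exactly the loss allowed.
\item Robustness lets the generator avoid the short cycles, and $\bfT$-disjointness of the repair intercalates keeps the steps from interfering.
\end{itemize}

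\textbf{Changing the global parity class.} Intercalate switches and even-cycle switches preserve $N_{\mathrm{row}}$, $N_{\mathrm{col}}$ and $N_{\mathrm{sym}}$ modulo $2$. So comparing arbitrary $\vec x,\vec y\in V$ also needs one of two things. Either you supply odd-cycle (``mixed'') switchings with controlled counts, which you do not. Or, as the paper does, you invoke the Cavenagh--Wanless equidistribution (\cref{thm:equidistribution-on-parities}).
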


\subsection{Further directions}The most obvious open question related to this work is whether the relative error  $2^{o(n)}$ can be improved to $1+o(1)$: is it true that in the setting of \cref{thm:main} we have $\mathbb{P}\big[(\vec\xi_{\textnormal{row}}(\mbf L),\vec\xi_{\textnormal{col}}(\mbf L),\vec\xi_{\textnormal{sym}}(\mbf L))=(\vec r,\vec c, \vec s)\big]= (1+o(1))2^{-3n}$?

We note that some kind of error term is (presumably) necessary: if there were some even $n$ for which $(\vec\xi_{\textnormal{row}}(\mbf L),\vec\xi_{\textnormal{col}}(\mbf L),\vec\xi_{\textnormal{sym}}(\mbf L))$ was exactly uniform on the set of all $(\vec r,\vec c,\vec s)$ satisfying \cref{eq:subspace}, this would falsify the so-called \emph{Alon--Tarsi} conjecture, which is widely believed to be true (see the survey in \cite{FM19}).

More generally, there are a lot of open problems about random Latin squares that until now have seemed quite intractable, as there have been very few general-purpose methods in the area. The paper \cite{kwan2025parities} (which first showed how to use intercalates to provide robust switching operations) was an important step towards a more complete theory of random Latin squares, and the ideas in this paper (showing how to use intercalate switchings to control more complex switching operations) constitute an additional step in this direction. We believe these ideas can be taken much further. For example, in upcoming work of the second, third and fourth authors with Richard Montgomery, we build on this approach to estimate the probability that a given set of entries is present in a random Latin square, which for example allows us to determine the asymptotic distribution of the number of $3\times 3$ subsquares. Interested readers may contact us if they wish to see a draft of our upcoming work.

\subsection{Proof ideas}\label{subsec:outline}
Before proceeding to the details of the proof, we take a moment to sketch some of the proof ideas. For simplicity, we restrict our attention to the problem of lower-bounding the probability that all rows are even. That is, we sketch a proof of \cref{thm:all-even}; the proof of \cref{thm:main} uses these same ideas, but is a bit more complicated, as one has to be careful that certain different types of switching operations don't interfere with each other.

\subsubsection{Previous work: intercalate switchings}
An \emph{intercalate} in a Latin square is a $2\times 2$ Latin subsquare. Intercalates permit a certain type of ``switching'' operation: we can exchange the two symbols of an intercalate to obtain a slightly different Latin square. Intercalates are very useful for controlling parity, as switching an intercalate changes the parities of the two rows it intersects (for each of these two rows, the switch acts like a transposition). In \cite{kwan2025parities}, it is shown that random Latin squares typically have a collection of special types of disjoint intercalates that are well-spread
throughout the Latin square and behave kind of `independently'. Such intercalates are called \emph{stable}. Roughly speaking, given a prescription of desired parities for the
rows that are controlled by these stable intercalates, one can switch some stable intercalates to obtain the prescribed parities. Although we expect there
to be stable intercalates all over the place, the current technology for studying random Latin squares is
not strong enough to rule out the possibility that some rows participate in no intercalates at all (this is related to the difficulty of enumerating Latin squares, as we will discuss further in \cref{subsubsec:approximation}). So, we are forced to deal with the possibility that there are some \emph{stubborn} rows that do not participate richly enough in our stable intercalates, and thus their parity cannot be controlled by stable intercalate switchings.
The main difficulty is thus controlling the parities of the stubborn rows.

\subsubsection{Short even-length cycle switchings, via intercalate switchings}
Intercalate switchings are a special case of a more general type of switching operation called \emph{(row) cycle switchings}. Specifically, given two rows $r_1,r_2$ in a Latin square $L$, we define $\sigma_L(r_1,r_2)$ to be the permutation that rearranges the entries of $r_1$ into the entries of $r_2$, and consider its cycle structure. The basis of cycle switching is the following simple observation: if we choose one cycle in this permutation, then swapping the entries of $r_1$ and $r_2$ in columns corresponding to this cycle maintains the Latin property. We refer to this operation as \emph{switching a row cycle}. If the cycle has even length, then this switching changes the parities of $r_1$ and $r_2$. (An intercalate switching is the special case where the cycle has length 2).

So, although we may have a few stubborn rows with no intercalates, we may hope that we can arrange for these stubborn rows to participate in short even-length cycles (they need to be short not to interfere too much with our collection of stable intercalates, as we will still need to use intercalate switchings to handle the non-stubborn rows). The key new idea is to use intercalate switchings in the non-stubborn rows in an \emph{indirect} way, to control the presence of short even-length cycles intersecting the stubborn rows. These short even-length cycles can then be switched to control the parities of the stubborn rows, after using stable intercalates to control the parities of all the non-stubborn rows.

To say a bit more about the details of the intercalate switching argument: our goal is to show that in a random Latin square (which may have some stubborn rows), we can typically switch a small number of intercalates to obtain a Latin square in which all odd stubborn rows intersect disjoint short even-length row cycles. A double-counting argument (together with a stable intercalate switching argument, for the $n-o(n)$ non-stubborn rows) will then allow us to deduce that a random Latin square is all-even with probability at least $(1/2)^{n-o(n)}$.

Let $s$ be some stubborn row (and suppose it is odd) in a Latin square $L$. For a row $r$, recall that $\sigma_L(r, s)$ is the permutation that rearranges the entries of row $r$ into the entries of $s$. We would like there to be a short even-length cycle in $\sigma_L(r, s)$. However, it may be the case that all cycles are long, or all cycles are short and of odd length. If there is a long cycle, the idea is to switch an intercalate that intersects the row $r$ in the entries belonging to this long cycle. This will have the effect of splitting the long cycle into two parts. On the other hand, if all cycles are short and of odd length, we can use an intercalate switch to merge two cycles. We will show that in a random Latin square, there is typically some choice of $r$ for which intercalates with the desired properties exist. (In fact, we need many such choices for $r$, so that we can pick a distinct such $r$ for each stubborn row.)

\subsubsection{Approximation with binomial random hypergraphs}\label{subsubsec:approximation}
In order to actually show that the desired intercalates are likely to exist, an important tool in our proof is an improved approximation lemma comparing random Latin squares with binomial random hypergraphs.

To motivate this, note that there is an alternative, more symmetrical view of Latin squares in terms of hypergraphs. We can represent a Latin square $L$ as a 3-partite 3-uniform hypergraph with parts $\cR,\cC,\cS$ of size $n$  corresponding to the rows, columns, and symbols, respectively. A hyperedge $(r,c,s)\in \mc R\times \mc C\times \mc S$ is present if and only if symbol $s$ appears in the cell determined by row $r$ and column $c.$ Thus, a Latin square $L$ can always be viewed as a subgraph of the complete 3-partite 3-uniform hypergraph $K^{(3)}_{n,n,n}$. As each vertex is contained in exactly $n$ hyperedges, this hypergraph contains a $(1/n)$-fraction of the hyperedges of $K^{(3)}_{n,n,n}$. Hence, as a more tractable heuristic model of a random Latin square, we can consider a random subset of the hyperedges of $K^{(3)}_{n,n,n}$ where each hyperedge is taken independently at random with probability $1/n$.

It turns out that one can use estimates on the number of $n\times n$ Latin squares, together with an analysis of the so-called \emph{triangle removal process}, to relate the likelihoods of certain events in the above binomial hypergraph model, to the likelihoods of corresponding events for random Latin squares. The first ``approximation lemma'' of this type appeared in \cite{kwan2020almost} (see also \cite{KSS21}), and a more refined version of this lemma recently appeared in \cite{kwan2025parities}.
Since there is a rather enormous gap between the state-of-the-art lower bounds and upper bounds on the number of $n\times n$ Latin squares, this method is only suitable for studying events that hold with overwhelmingly high probability. Fortunately, this includes various events about intercalates~\cite{KSSS22,kwan2025parities} (as long as we consider intercalates ``in aggregate'' over a large part of the Latin square; e.g.\ we cannot say anything about the intercalates intersecting a particular row).

In this paper, to prove our desired properties about intercalates, we prove a generalization of the approximation lemma in \cite{kwan2025parities} that allows us to study a uniformly random completion of a fixed Latin rectangle. This is necessary because the intercalates we are interested in depend on the cycle structure of certain pairs of rows, so we need a way to study a random Latin square after revealing some information about some of its rows.

\subsection{Statement on AI use} ChatGPT 6 Pro was used in the final stages of proofreading.

\section{A concentration inequality}
Throughout this paper, we will frequently use a general-purpose concentration inequality for functions of independent Bernoulli random variables. The following statement appears as \cite[Corollary~6]{warnke2016method}\footnote{In~\cite{warnke2016method}, \cref{freedman_inequality} is stated only for the upper tail of $\phi(\vec {\mathbf g})$; applying it to $-\phi(\vec {\mathbf g})$ yields the lower tail. Note that we have omitted the $(1-p_i)$ factor in the denominator since we do not need it.}; it is proved with the martingale approach of Freedman~\cite{Fre75}.
\begin{theorem}
\label{freedman_inequality}
Let $\vec{\mathbf{g}} = (\mathbf{g}_1, \dots, \mathbf{g}_N)$ be a sequence of independent Bernoulli random variables with $\mathbb{P}[\mathbf{g}_i=1]= p_i$. Let $\phi:\{0,1\}^N \rightarrow \mathbb{R}$ satisfy $| \phi(\vec {g}) - \phi(\pvec{g})| \leq K_i$ for all pairs $\vec{g}, \pvec{g} \in \{0,1\}^N$ differing only in the $i$-th coordinate, and suppose $K_i \leq K$ for all $i\in[N]$. Then for all $t \geq 0$, we have
\[ \mathbb{P}\Big[\big|\phi(\vec {\mathbf g}) - \mathbb{E}[\phi(\vec{ \mathbf{g}})]\big| \geq t\Big] \leq 2\exp\Bigg( - \frac{t^2}{ 2 \sum_{i=1}^N p_i K_i^2 + 2Kt/3 }\Bigg).\]
\end{theorem}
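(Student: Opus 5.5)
This is the Freedman-type inequality for functions of independent Bernoulli variables, which the paper quotes from Warnke. I would prove it by the standard Doob martingale argument.

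\emph{Step 1: the martingale.} Write $X=\phi(\vec{\mathbf g})$ and, for $0\le i\le N$, let $M_i=\mathbb E[X\mid \mathbf g_1,\dots,\mathbf g_i]$, so $M_0=\mathbb E X$ and $M_N=X$. Let $D_i=M_i-M_{i-1}$. Conditional on $\mathbf g_1,\dots,\mathbf g_{i-1}$, put
\[h_b=\mathbb E[X\mid \mathbf g_1,\dots,\mathbf g_{i-1},\mathbf g_i=b]\qquad (b\in\{0,1\}).\]
By independence of the coordinates, $h_1-h_0$ is an average of differences $\phi(\vec g)-\phi(\pvec g)$ over pairs differing only in coordinate $i$. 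Hence $|h_1-h_0|\le K_i$.

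\emph{Step 2: bounds on the increments.} We have $D_i=(h_1-h_0)(\mathbf g_i-p_i)$. This gives $|D_i|\le K_i\le K$. It also gives the conditional variance bound
\[\mathbb E[D_i^2\mid \mathbf g_1,\dots,\mathbf g_{i-1}]=(h_1-h_0)^2p_i(1-p_i)\le p_iK_i^2.\]
So the predictable quadratic variation is deterministically at most $V:=\sum_i p_iK_i^2$. Dropping the factor $1-p_i$ here is exactly the simplification noted in the footnote.

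\emph{Step 3: the exponential moment (the only real work).} For $\lambda>0$ I would use the inequality $e^x\le 1+x+x^2g(\lambda K)/(\lambda K)^2$ for $x\le \lambda K$, where $g(u)=e^u-1-u$. With $\mathbb E[D_i\mid\cdot]=0$, this gives
\[\mathbb E[e^{\lambda D_i}\mid\cdot]\le \exp\bigl(p_iK_i^2\,g(\lambda K)/K^2\bigr).\]
Iterating over $i$ yields $\mathbb E e^{\lambda(X-\mathbb EX)}\le \exp\bigl(Vg(\lambda K)/K^2\bigr)$.

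Markov's inequality, optimized at $\lambda=K^{-1}\log(1+tK/V)$, gives Bennett's bound $\exp\bigl(-(V/K^2)\,h(tK/V)\bigr)$ with $h(u)=(1+u)\log(1+u)-u$. The elementary inequality $h(u)\ge u^2/(2+2u/3)$ then turns this into
\[\exp\bigl(-t^2/(2V+2Kt/3)\bigr).\]

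\emph{Step 4: the lower tail.} Applying the same argument to $-\phi$, which has the same Lipschitz constants, handles the lower tail. A union bound over the two tails gives the factor $2$.

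No step is a serious obstacle. The main care point is Step 1: the conditional difference $h_1-h_0$ must be bounded using only coordinatewise Lipschitz information, and this is where independence of the $\mathbf g_i$ is essential. The rest is the classical Bernstein/Freedman computation, and one could alternatively cite Freedman's inequality for martingales with bounded increments and bounded predictable variation directly.
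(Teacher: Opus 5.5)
Your proposal is correct and matches the approach behind the cited result. The paper gives no proof of its own: it quotes Warnke's Corollary~6, which is proved by Freedman's martingale method, i.e.\ your Doob martingale with increments $D_i=(h_1-h_0)(\mathbf g_i-p_i)$, predictable variance at most $\sum_i p_iK_i^2$, and the Bennett/Bernstein exponential-moment bound; its footnote also makes your two remarks about getting the lower tail from $-\phi$ and dropping the $(1-p_i)$ factor. The only point left implicit is the degenerate case $V=0$, where your choice of $\lambda$ is undefined, but then every $D_i$ vanishes almost surely, so $\phi(\vec{\mathbf g})$ is almost surely constant and there is nothing to prove for $t>0$.
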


Note that, in order to obtain a strong bound from \cref{freedman_inequality}, we need to know that $\phi(\vec g)$ is not very sensitive to changes in any individual $g_i$. In this respect \cref{freedman_inequality} is similar to the much more well-known \emph{bounded differences inequality} of McDiarmid~\cite{McD89} (which is a consequence of the Azuma--Hoeffding martingale concentration inequality). However, we need the stronger statement of \cref{freedman_inequality} for its essentially-optimal dependence on $p_1,\dots,p_N$. 

\section{Approximation lemma}
In this section, we state and prove \cref{lem:random_LS_to_triangle_removal}, which allows us to approximate uniformly random completions of Latin rectangles to Latin squares in terms of the triangle removal process. In \cref{subsec:cor}, we state a convenient corollary, which approximates random completions of Latin rectangles via binomial random hypergraphs. The latter are typically easier to work with than the triangle removal process itself. Note that a similar approximation lemma and its corresponding corollary were proved in~\cite{kwan2025parities}; the advantage of our new result is that it allows us to condition on the presence of a given Latin rectangle in the random Latin square. The proof approach is similar to the proof in~\cite{kwan2025parities}.

To be able to state our approximation lemma, we first introduce some definitions.

\begin{definition}[Hypergraph view of Latin squares]\label{def:hypergraph_view}
We can view Latin squares as sets of row, column, and symbol triples, which can be thought of as subsets of $[n]^3$. The set $[n]^3$, in turn, can be identified with the complete $3$-uniform $3$-partite hypergraph $K^{(3)}_{n,n,n}$. Given a subgraph $G\subseteq K^{(3)}_{n,n,n}$, say a ``conflict'' is a pair of hyperedges which intersect in two vertices. Thus, a partial Latin square is nothing more than a subgraph of $K^{(3)}_{n,n,n}$ with no conflicts. Similarly, a partial $w\times n$ Latin rectangle is a subgraph of $K^{(3)}_{w,n,n}$ with no conflicts.
\end{definition}

In what follows, we will interchangeably take the hypergraph and the array view of (partial) Latin squares.

\begin{definition}[Latin rectangles]
Fix $w\in [n]$ and let $\cL_n^w$ be the set of $w\times n$ Latin rectangles with rows indexed by $[w]$, and $\cL_n$ be the set of $n \times n$ Latin squares. Let $\cL_n^{\overline{w}}$ be the set of $(n-w)\times n$ Latin rectangles with rows indexed by $[n]\setminus [w]$.
\end{definition}

\begin{definition}[Partial Latin rectangles]
\label{def:partial-latin-rectangles}
Let $\cP_{n,m}^{\,\overline{w}}$ be the set of all partial Latin rectangles with $m$ entries, rows indexed by $[n]\setminus [w]$ and columns indexed by $[n].$ Let $\cP_{n}^{\,\overline{w}}=\bigcup_{m\leq (n-w)n}\cP_{n,m}^{\,\overline{w}}$ be the set of all partial $(n-w)\times n$ Latin rectangles. We let $\cP_{n,m}:=\cP^{\mkern 1mu \overline 0}_{n,m}$ and $\cP_{n}:=\cP^{\mkern 1mu \overline 0}_{n}$ be the corresponding sets of partial Latin squares.

Fix a $w\times n$ Latin rectangle $W\in \cL_n^w$. Let $\cP_m^W$ be the set of all $(n-w)\times n$ partial Latin rectangles $P$ with rows indexed by $[n]\setminus [w]$ and with $m$ hyperedges that don't conflict with $W$ (i.e., such that $W\cup P$ is a partial Latin square), and let $\cP^W = \bigcup_{m \leq (n-w)n} \cP^W_m$. 
\end{definition}

\begin{definition}[Ordered partial Latin rectangles]
An \emph{ordered} partial Latin rectangle is a partial Latin rectangle together with an ordering on its hyperedges.
We define $\vec \cP_{n,m}^{\,\overline{w}}, \vec \cP^{\,\overline{w}}_n, \vec \cP^W_m,$ and $ \vec \cP^W$ as the ordered versions of $ \cP_{n,m}^{\,\overline{w}}, \cP^{\,\overline{w}}_n, \cP^W_m$, and $ \cP^W$ respectively. That is,  e.g.\ $\vec \cP_{n,m}^{\,\overline{w}}$ is the set of all ordered partial Latin rectangles with $m$ entries, rows indexed by $[n]\setminus [w]$ and columns indexed by $[n].$
\end{definition}

To formally define the triangle removal process, we need the notion of a \emph{residual graph}, which keeps track of which pairs of vertices are not yet covered by a partial Latin square or rectangle.
\begin{definition}[Residual graph]
\label{def:residue-graph}
    For a $w \times n$ Latin rectangle $W\in \cL_n^w$, let $G^W\subseteq K_{n-w,n,n}$ be the graph which contains an edge $uv$ (of $K_{n-w,n,n}$) if and only if there is no hyperedge of $W$ containing $u$ and $v$. That is to say, $G^W$ is the graph of pairs that would need to be covered to complete $W$ to a full Latin square. Note that one of the parts of this tripartite graph has only $n-w$ vertices, corresponding to the $n-w$ rows not in $W$. 
    
    Let $P\in \cP_m^W$ be a partial Latin rectangle not conflicting with $W$. Then let $G^W(P)$ be the subgraph of $G^W$ which contains an edge $uv$ if and only if there is no hyperedge of $P$ containing $u$ and $v.$
\end{definition}

\newcommand{\abort}{\perp}
\begin{definition}[Triangle removal process starting from a Latin rectangle]
    For a $w \times n$ Latin rectangle $W\in \cL_n^w$, let $\trp(W, m)$ be the partial Latin square distribution obtained from $m$ steps of the triangle removal process starting from $G^W$. That is, we start from the residual graph $G^W$ and during the triangle removal process we sequentially add edges to an initially empty outcome, which is and will remain a partial Latin square throughout. At each step, we select one triangle (copy of $K_3$) in the residual graph uniformly at random, add its corresponding row/column/symbol triple to the outcome and remove its three edges from the residual graph. Thus, an outcome of $\trp(W,m)$ always has $m$ hyperedges, and none of these hyperedges conflict with $W$ (unless we run out of triangles and get the outcome ``$\abort$''). We sometimes treat outcomes of $\trp(W,m)$ as ordered, and other times as unordered partial Latin squares.
\end{definition}

Our approximation lemma will compare Latin rectangles to ordered partial Latin rectangles. To relate these two types of objects, we need a notion of inheritance, defined as follows. 
Note that we identify a property of Latin squares (rectangles) with the set of all Latin squares (rectangles) which have that property, and similarly for ordered and/or partial Latin squares (rectangles).

\begin{definition}[Property inheritance]\label{def:inherited}
    Let $\vec\cU\subseteq \vec{\cP}_{n}^{\,\overline{w}}$ be a property of ordered partial Latin rectangles  and $\cT\subseteq\cL_{n}^{\overline{w}}$ be a property of Latin rectangles. For $m \leq n(n-w)$, we say $\vec\cU$ is \emph{$(\rho,m)$-inherited} from $\cT$ if for any $L \in \cT$, taking $\vec{\bfP}_m(L)$ to be a uniformly random subset of $m$ hyperedges of $L$, equipped with a uniformly random order, we have $\vec{\bfP}_m(L) \in \vec\cU$ with probability at least $\rho$. We similarly define $(\rho,m)$-inheritance for properties $\cU \subseteq \cP^{\,\overline{w}}_n$.
\end{definition}

Our approximation lemma will concern the uniform distribution over all completions of a Latin rectangle to a Latin square, defined as follows.
\begin{definition}[Completions]
\label{def:completions}
For any $W \in \cL_n^w$, we denote by $\cL_n(W)$ the set of completions of $W$, that is, the set of $n\times n$ Latin squares whose first $w$ rows induce $W$. We further denote by $\cL_n^{\overline{w}}(W)$ the set of Latin rectangles obtained from $\cL_n(W)$ by removing the first $w$ rows of each Latin square. Similarly, for $P \in \cP_n$, we can define $\cL_n(P)$ analogously.
\end{definition}

We can now state our approximation lemma.

\begin{lemma}
    \label{lem:random_LS_to_triangle_removal}
    Fix any $\alpha, \zeta \in (0,1)$ with $1-\alpha = \Omega(1) $ and $\zeta = \Omega(1)$. Let $\rho \in (0,1]$ with $\rho \geq \exp(-o(\log^2{n}))$. Let $w\leq (1 - \zeta)n$ and $m=\alpha n(n-w)$. Suppose $W\in\cL_n^w$ is a $w \times n$ Latin rectangle. Consider properties $\vec{\cU}\subseteq \vec{\cP}_{n}^{\,\overline{w}}$ and $\cT\subseteq\cL_{n}^{\overline{w}}$ such that $\vec{\cU}$ is $(\rho,m)$-inherited from $\cT$. For $\vec{\bfR} \sim \trp(W,m)$ and $\mathbf{L} \sim \on{Unif}(\cL_n^{\overline{w}}(W))$,
    \[ \mathbb{P}[\mathbf{L} \in \cT ] \leq \exp\Bigg(\frac{n \log^2 {n}}{1-\alpha-o(1)}\Bigg) \mathbb{P}[ \vec{\mathbf{R}} \in \vec{\cU}].\]
\end{lemma}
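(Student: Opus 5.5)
We compare the two measures through a double-counting of ordered partial Latin rectangles. The plan is to write $\mathbb P[\vec\bfR\in\vec\cU]$ as a sum over $\vec P\in\vec\cU\cap\vec\cP^W_m$ of the probability $\mathbb P[\vec\bfR=\vec P]$. We lower-bound each such probability in terms of the number of completions of $W\cup P$, and then compare with the number of completions of $W$.

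\textbf{Step 1: the inheritance sum.} Let $N=|\cL_n^{\overline w}(W)|$. By $(\rho,m)$-inheritance,
\[
\rho\,|\cT\cap \cL_n^{\overline w}(W)|\le \sum_{L\in\cT\cap\cL_n^{\overline w}(W)}\mathbb P[\vec\bfP_m(L)\in\vec\cU].
\]
Each $\vec P\in\vec\cU$ arises from $L$ with probability $1/(m!\binom{n(n-w)}{m})$ whenever $P\subseteq L$. Writing $N(P)$ for the number of completions of $W\cup P$ and $M=(n(n-w))_m$, this gives
\[
\mathbb P[\mathbf L\in\cT]\le \frac{1}{\rho N M}\sum_{\vec P\in\vec\cU\cap\vec\cP^W_m}N(P).
\]

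\textbf{Step 2: comparison with the triangle removal process.} For an ordered $\vec P=(e_1,\dots,e_m)$, we have
\[
\mathbb P[\vec\bfR=\vec P]=\prod_{i<m}\frac{1}{T_i(\vec P)},
\]
where $T_i$ is the number of triangles in the residual graph $G^W(\{e_1,\dots,e_i\})$. The trivial bound is $T_i\le \sum_{\text{row-column edges}}(\text{codegree})$. In $G^W(P_i)$ every vertex has degree $n-i/\cdot$, roughly $(n(n-w)-i)/(n-w)$ per row vertex. Bounding each triangle count by the number of residual row-column edges times $n$ gives
\[
T_i\le (n(n-w)-i)\cdot\frac{n(n-w)-i}{n-w}\cdot\frac1n\cdot n^{O(1)/n}.
\]
Alternatively, and more cleanly, we use a Brégman/Minc-type bound. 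This gives $\mathbb P[\vec\bfR=\vec P]\ge \prod_i T_i^{-1}$ with an explicit upper bound on $T_i$ depending only on $i$, namely $T_i\le (n(n-w)-i)\,\Delta_i$ with $\Delta_i\le n-i/(n-w)$.

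\textbf{Step 3: the enumeration ratio.} We need
\[
\frac{N(P)}{N M}\le \exp\!\Big(\frac{n\log^2 n}{1-\alpha-o(1)}\Big)\rho\,\prod_i T_i^{-1}.
\]
The upper bound on $N(P)$ comes from the Brégman inequality applied row by row, since the completion is a sequence of permanents of $0/1$ matrices with row sums $\le n-\deg_P$. The lower bound on $N$ comes from the van der Waerden / Egorychev–Falikman inequality. Completing $W$ row by row, each new row is a perfect matching in a regular bipartite graph of degree $n-k$, which gives $N\ge\prod_{k=w}^{n-1}(n-k)^{n}\big((n-k)/n\big)^n\cdots$. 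Then the ratio is estimated by Stirling's formula. Most leading terms cancel against $M\prod_i T_i$, since the triangle removal process behaves like a random greedy packing whose counts match the product of degrees.

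\textbf{Main obstacle.} The leftover error terms accumulate from the $\log$ factors in Brégman, about $\log n$ per row. Worse, the residual density near the end of the process is about $1-\alpha$, and this is what produces the $1/(1-\alpha)$ denominator together with the $n\log^2 n$ term. The factor $\rho^{-1}\le\exp(o(\log^2 n))$ is then absorbed into the $o(1)$. The essential difficulty is making the Brégman upper bound for $N(P)$ uniform in $P$ when $P$ is irregular. Here I would follow the argument in \cite{kwan2025parities}, replacing the full square by the $(n-w)$-row residual structure, which is where the condition $w\le(1-\zeta)n$ is used to keep the degrees of order $n$.
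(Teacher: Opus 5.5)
Your Step~1 and the Egorychev--Falikman lower bound on $|\cL_n(W)|$ are fine, but Steps~2--3 cannot work as planned. You try to show, termwise for \emph{every} $\vec P\in\vec\cU\cap\vec\cP^W_m$, that $\frac{|\cL_n(W\cup P)|}{|\cL_n(W)|}\cdot\frac{(N-m)!}{N!}\le\rho\, e^{O(n\log^2 n)}\Pr[\vec\bfR=\vec P]$, where $N=n(n-w)$. This inequality is false.

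Here is a counterexample. Let $P$ consist of $\alpha(n-w)$ completely filled rows, inserted row by row.
\begin{itemize}
\item After $i$ steps, each residual row--column edge in an untouched row has about $(n-w)(1-i/N)$ common symbol-neighbours. So $X_\triangle(G^W(P_i))\approx(1-i/N)^2n(n-w)^2$, instead of the typical $(1-i/N)^3n(n-w)^2$.
\item $W\cup P$ is a Latin rectangle, so $|\cL_n(W\cup P)|=((1-\alpha)(n-w)/e^2)^{(1-\alpha)N}e^{O(n\log^2 n)}$, rather than $((1-\alpha)^2(n-w)/e^2)^{(1-\alpha)N}$.
\item After Stirling, the left side exceeds $\Pr[\vec\bfR=\vec P]$ by a factor $e^{\alpha N-O(n\log^2 n)}=\exp(\Theta(n^2))$.
\end{itemize}

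Several of your intermediate bounds also fail:
\begin{itemize}
\item No upper bound on $T_i$ depending only on $i$ can succeed. Any such bound must be at least $(1-i/N)^2n(n-w)^2$. That already loses $\prod_i(1-i/N)^{-1}=\exp(\Theta(n^2))$ against typical $\vec P$, where the comparison is actually tight.
\item Your displayed bound on $T_i$ is not even valid. It is of order $N(1-i/N)^2$, which at $i=0$ is below $X_\triangle(G^W)=n(n-w)^2$.
\item Br\'egman applied row by row, with row sums at most $n-\deg_P$, ignores the column constraints. The symbols available in later rows depend on the completion, not only on $P$. As a result this bound overshoots the needed one by $\exp(\Theta(n^2))$.
\end{itemize}
So the ``uniformity over irregular $P$'' you flag as the main obstacle is not a technicality: it is false.

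The missing idea is to discard atypical $\vec P$ before comparing. This is what the hypothesis $\rho\ge\exp(-o(\log^2 n))$ is for, not merely absorbing $\rho^{-1}$.

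The paper replaces $\vec\cU$ by $\cS=\vec\cU\cap\vec{\mc P}^{W,\triangle:\varepsilon}_m\cap\vec{\mc P}^{W,\gamma}_m$, with $\varepsilon=n^{-1}\log n$ and $\gamma=n^{-1/4}$. Apply Pittel's inequality and Freedman's inequality to a binomial subset of a fixed completion $L$. This shows a random ordered $m$-subset of $L$ falls outside these sets with probability $\exp(-\Omega(\log^2 n))=o(\rho)$. Hence inheritance survives with parameter $\rho(1-o(1))$.

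On $\cS$, both sides are essentially independent of $\vec P$:
\begin{itemize}
\item Triangle-typicality of every prefix gives $\Pr[\vec\bfR=\vec P]=(1\pm2\varepsilon)^{\alpha N}e^{O(\log n)}\big(e^3/(n(n-w)^2)\big)^{\alpha N}(1-\alpha)^{3N(1-\alpha)}$.
\item For the completions, use an entropy argument with independent uniformly random orderings of rows \emph{and} columns. This gives $\log|Q_e|-2+O(\log^2 n/|Q_e|)$ per empty cell.
\item Combine this with the quasirandom codegrees $|Q_e|=(1\pm\gamma)(1-\alpha)^2(n-w)$ and Jensen applied to $\sum_e|Q_e|=X_\triangle(G^W(P))$. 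This yields $|\cL_n(W\cup P)|\le\big((1-\alpha)^2(n-w)/e^2\big)^{(1-\alpha)N}\exp\big(n\log^2 n/(1-\alpha-o(1))\big)$.
\end{itemize}
These leading terms then cancel exactly against the van der Waerden bound on $|\cL_n(W)|$ and the factor $(N-m)!/N!$.
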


\cref{lem:random_LS_to_triangle_removal} is a generalisation of Lemma 4.7 in~\cite{kwan2025parities}, with the main difference being that in~\cite{kwan2025parities}, we do not condition on a Latin rectangle $W$ (that is, we have $w = 0$). The proof of \cref{lem:random_LS_to_triangle_removal} follows closely that of Lemma 4.7 in~\cite{kwan2025parities}. Before we present it, we introduce some more notation and state some helper lemmas. To reason about residual graphs of Latin rectangles, we need concepts of density, triangle-typicality and quasirandomness for them that are normalized in terms of the size of the Latin rectangle.

\begin{definition}[Triangle count and co-degree]
    For a graph $G$, we denote by $X_\triangle(G)$ the number of triangles in $G$. For $W \in \cL_n^w$, $P \in \cP^W$, and vertices $u,v$, let $N_W(u,v)$ and $N_{W\cup P}(u,v)$ be the sets of common neighbours of $u$ and $v$ in $G^W$ and in $G^W(P)$, respectively. 
\end{definition}

\begin{remark}\label{rem:triangles-and-common-neighbors-in-latin-rectangle}
Note that for $W \in \cL_n^w$, we have $X_\triangle(G^W)=n(n-w)^2$, since the bipartite graph between the column and symbol parts of $G^W$ contains $n(n-w)$ edges, and each such edge forms a triangle with each of the $n-w$ vertices in the row part. Additionally, for each row $u$ and each column/symbol $v$, the number of common neighbours of $u$ and $v$ in $G^W$ is precisely $n-w$, since $v$ has $n-w$ neighbours that are not rows, and all of them are also neighbours of $u$. Each column $u$ and symbol $v$ also have precisely $n-w$ common neighbours in $G^W$, namely all rows. Thus, $|N_W(u,v)| = n-w$ for all $u$ and $v$ in different parts of $G^W$. Finally, we have $e(G^W)=3n(n-w)$.
\end{remark}

\begin{definition}[$W$-relative density, triangle-typicality and quasirandomness]
\label{def:density-typicality-quasirandomness}
Let $\varepsilon, \gamma\geq 0$, $W\in\cL_n^w$ and $P\in\cP^W$. 

\begin{itemize}

\item We define the $W$-\emph{relative density} of $P$ as $\dens(G^W(P))=e(G^W(P))/e(G^W)$.

\item We say that $G^W(P)$ is $W$-\emph{relative $\varepsilon$-triangle-typical} if its number of triangles satisfies \newline $X_\triangle(G^W(P)) \in(1\pm \varepsilon)X_\triangle(G^W)\dens(G^W(P))^3.$ 
Let $\mc P^{W,\triangle:\varepsilon}_m$ be the set of partial Latin rectangles $P\in \cP_m^W$ such that $G^W(P)$ is $W$-relative $\varepsilon$-triangle-typical. Let $\vec{\mc P}^{W,\triangle:\varepsilon}_m$ be the set of ordered partial Latin rectangles $\vec{P}\in \vec{\cP}_m^W$ such that for each $i\leq m$, if we consider the partial Latin rectangle $P_i$ consisting of the first $i$ hyperedges of $\vec{P}$, then $P_i\in \mc P^{W,\triangle:\varepsilon}_i$.

\item We say that $G^W(P)$ is $W$-\emph{relative $\gamma$-quasirandom} if for each $u,v$ in different parts, the number of common neighbours of $u$ and $v$ in $G^W(P)$ satisfies $ |N_{W\cup P}(u,v)| \in (1\pm\gamma)|N_{W}(u,v)|\dens(G^W(P))^2.$ Let $\mc P^{W,\gamma}_m$ be the set of partial Latin rectangles $P\in \cP_m^W$ such that $G^W(P)$ is $W$-relative $\gamma$-quasirandom, and let $\vec \cP_m^{W,\gamma}$ be the set of ordered partial Latin rectangles whose unordered analogues are in $\cP_m^{W,\gamma}$.
\end{itemize}
\end{definition}

The following two lemmas show that most random subsets of Latin rectangles are triangle-typical and quasirandom.
\begin{lemma}
\label{lem:epsilon-triangular} Let $\alpha, \zeta \in (0, 1)$ with $1-\alpha = \Omega(1) $ and $\zeta = \Omega(1)$. Let $w\leq (1 - \zeta)n$ and $m=\alpha n(n-w)$. Consider any $w\times n$ Latin rectangle $W\in\cL_n^w$, any $L\in \cL_n^{\overline{w}}(W)$, and let $\vec{\bfP}_{m}(L)$ be a random ordering of a random set of $\alpha n(n-w)$ hyperedges of $L$. Then, with $\varepsilon = n^{-1} \log n$, we have
\[
\Pr[\vec{\bfP}_{m}(L) \notin \vec{\mc P}^{W,\triangle:\varepsilon}_m] \leq \exp(-\Omega(\log^2 n)).
\]
\end{lemma}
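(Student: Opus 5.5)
We fix $L\in\cL_n^{\overline w}(W)$ and an index $i\le m$, and show that the $i$-th prefix $P_i$ of $\vec\bfP_m(L)$ lies in $\cP^{W,\triangle:\varepsilon}_i$ with probability $1-\exp(-\Omega(\log^2 n))$. A union bound over the at most $n^2$ values of $i$ then gives the lemma. The prefix $P_i$ is a uniformly random $i$-subset of the $N:=n(n-w)$ hyperedges of $L$.

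We would first replace the uniform $i$-subset by a binomial one. Each hyperedge of $L$ is kept independently with probability $p=i/N$. The binomial subset has exactly $i$ elements with probability $\Omega(1/n)$, and conditioned on that event it is a uniform $i$-subset. So it suffices to prove a tail bound of order $\exp(-\Omega(\log^2 n))$ for the binomial model, and a factor $n$ is absorbed. In this model, write $q=1-p$. Every row/column, row/symbol and column/symbol pair of $G^W$ is covered by exactly one hyperedge of $L$, and that pair survives in $G^W(P)$ exactly when this hyperedge is not chosen. Hence $e(G^W(P))=3(N-|P|)$, and the density is $q$ up to fluctuations of order $\sqrt N\log n/N$. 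These fluctuations are handled by \cref{freedman_inequality} with $K=1$.

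Next we compute the triangle count. A triangle $\{r,c,s\}$ of $G^W$ with all three edges surviving requires that its three pairs be covered in $L$ by three hyperedges, which we call the ``covering hyperedges''. If $(r,c,s)\in L$, all three pairs are covered by the same hyperedge; there are $N$ such triangles. Otherwise the covering hyperedges are three distinct hyperedges. Thus
\[\E X_\triangle(G^W(P)) = (N(n-w)-N)q^3 + Nq = N(n-w)q^3\,(1+O(1/n)).\]
Since $q=1-p\ge 1-\alpha=\Omega(1)$ and $n-w\ge \zeta n$, the relative error is $O(1/n)$, which is well within $\varepsilon=n^{-1}\log n$. This matches $X_\triangle(G^W)\dens^3$ by \cref{rem:triangles-and-common-neighbors-in-latin-rectangle}.

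The key step is concentration of $X_\triangle$, which we would obtain from \cref{freedman_inequality}. Toggling one hyperedge $(r,c,s)$ of $L$ changes only the triangles that use one of its three pairs. The pair $rc$ lies in $n-w$ triangles and each of the other two pairs lies in $n-w$ triangles, so $K_e\le 3(n-w)=O(n)$. Then $\sum_e p K_e^2\le N\cdot O(n^2)=O(n^4)$. Take $t=\tfrac12\varepsilon N(n-w)q^3=\Theta(n^2\log n)$. Then $t^2=\Theta(n^4\log^2n)$, while the denominator is $O(n^4)+O(n\cdot n^2\log n)=O(n^4)$. The resulting tail is $\exp(-\Omega(\log^2 n))$.

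Finally, the fluctuation of $\dens^3$ around $q^3$ is a relative $O(n^{-1}\log n\cdot\sqrt{w'}/\sqrt{n})$-type error, so it is smaller than $\varepsilon/2$. The main obstacle is exactly this bookkeeping of constants: we must ensure that the three sources of error (the $O(1/n)$ bias, the concentration of $X_\triangle$, and the concentration of the density) together fit within $\varepsilon$. If necessary, we would take $t$ to be a constant fraction of $\varepsilon N(n-w)$, which only affects the implicit constant in $\Omega(\log^2n)$.
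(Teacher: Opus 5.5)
Your proposal is correct and follows essentially the paper's route: a union bound over prefix lengths, passage to the binomial model (the paper cites Pittel's inequality where you condition on $|P|=i$), the same expectation $Nq+(N(n-w)-N)q^3$, a Lipschitz constant of about $3n$, and Freedman's inequality (\cref{freedman_inequality}). Your separate density-concentration step is unnecessary, and as stated its error is really of order $\varepsilon$ rather than below $\varepsilon/2$ unless you shrink its threshold by a constant; since the density is exactly $q$ on the event $|P|=i$, you can instead target the fixed value $X_\triangle(G^W)q^3$ in the binomial model, exactly as the paper does.
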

\begin{proof}
We will use the union bound over all $m'\leq \alpha n(n-w)$, so fix some $m' \in [\alpha n(n-w)]$. We define $\bfB_p(L)$ to be a random subset of hyperedges of $L$ where each hyperedge is taken independently with probability $p=m'/(n(n-w)).$ Moreover, let $\bfX$ be the number of triangles in $ G^W(\bfB_p(L))$. By ``Pittel’s inequality'' (see e.g.\ \cite[page~17]{janson2011random}), for any property $\cQ$ of unordered partial
Latin rectangles,\footnote{With a mild abuse of notation, we denote by $\vec{\bfP}_{m'}(L)\notin \cQ$ the event ``the unordered version of $\vec{\bfP}_{m'}(L)$ is not in $\cQ$''.}
\[\mathbb{P}[\vec{\bfP}_{m'}(L)\notin \cQ]\leq 3\sqrt{m'}\,\, \Pr[\bfB_p(L)\notin \cQ].\]
Thus, it is sufficient to show that $\Pr[\bfX \notin (1 \pm \varepsilon)n(n-w)^2(1-p)^3] \leq \exp(-\Omega(\log^2 n))$.
Consider any vertices $v_1,v_2, v_3$ in distinct parts of $G^W(\bfB_p(L))$. If $v_1v_2v_3 \in L$, the probability that $v_1v_2v_3$ is a triangle in $G^W(\bfB_p(L))$ is precisely $1 - p$ and there are $n(n-w)$ such triples. On the other hand, if $v_1v_2v_3 \notin L$ and $v_1v_2v_3$ is a triangle in $G^W$, the probability $v_1v_2v_3$ is a triangle in $G^W(\bfB_p(L))$ is $(1 - p)^3$, since each pair of vertices among $v_1, v_2, v_3$ is in a distinct hyperedge of $L$, and all these hyperedges need to be absent from $G^W(\bfB_p(L))$ in order for $v_1v_2v_3$ to be in $G^W(\bfB_p(L))$. Note that there are $n(n-w)^2-n(n-w)$ triples $v_1,v_2,v_3$ of this type (as there are $n(n-w)^2$ triangles in $G^W$ by \cref{rem:triangles-and-common-neighbors-in-latin-rectangle}). Thus, we get
\[
\mu := \mb{E}[\bfX] = n(n-w)(1 - p) + (n(n-w)^2 - n(n-w))(1 - p)^3 = \Theta(n^3).
\]
There are $n(n-w)$ hyperedges in $L$, and adding or removing a hyperedge from $\bfB_p(L)$ can affect $\bfX$ by at most $1+3(n-w-1)\leq 3n$. The conclusion of the lemma follows by \Cref{freedman_inequality}.
\end{proof}

\begin{lemma}
\label{lem:quasirandomness}
Let $\alpha, \zeta \in (0, 1)$ with $1-\alpha = \Omega(1) $ and $\zeta = \Omega(1)$. Let $ w\leq (1 - \zeta)n$ and let $m=\alpha n(n-w)$. Consider any $w\times n$ Latin rectangle $W\in\cL_n^w$, any $L\in \cL_n^{\overline{w}}(W)$ and let $\bfP_{m}(L)$ be a random subset of $\alpha n(n-w)$ hyperedges of $L$. Let $\gamma \in [0,1]$ with \( \gamma = \omega(n^{-1/2} \log^{1/2} n) \). Then
\[
\Pr[\bfP_{m}(L) \notin \mc P^{W,\gamma}_{m}] \leq \exp(-\Omega(\gamma^2 n)).
\]

\end{lemma}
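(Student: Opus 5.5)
We follow the template of the proof of \cref{lem:epsilon-triangular}: pass to a binomial model, compute expected co-degrees, and apply \cref{freedman_inequality} pair by pair. First, by Pittel's inequality it suffices to prove the bound for $\bfB_p(L)$, the random subset of hyperedges of $L$ in which each hyperedge is kept independently with probability $p=\alpha$. The factor $3\sqrt m$ this costs is absorbed by $\exp(-\Omega(\gamma^2 n))$, because $\gamma^2 n=\omega(\log n)$. We also need the density to concentrate. Every hyperedge of $\bfB_p(L)$ covers exactly three edges of $G^W$, so $\dens(G^W(\bfB_p(L)))=1-|\bfB_p(L)|/(n(n-w))$. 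A Chernoff bound (or \cref{freedman_inequality} with $K=1$) shows that this equals $1-p\pm \gamma/10$ with probability $1-\exp(-\Omega(\gamma^2 n^2))$. The definition of $\mc P^{W,\gamma}_m$ concerns the actual density, so after this step it suffices to show that every co-degree lies in $(1\pm\gamma/2)(n-w)(1-p)^2$.

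Next we fix a pair $u,v$ in different parts and compute the expected co-degree, say for $u$ a row and $v$ a column; the other two types are symmetric. By \cref{rem:triangles-and-common-neighbors-in-latin-rectangle}, $N_W(u,v)$ consists of the $n-w$ symbols $s$ that do not appear in column $v$ of $W$. Such an $s$ stays a common neighbour exactly when the edges $us$ and $vs$ both survive. Each of these edges lies in a unique hyperedge of $L$: the entry $s$ in row $u$, and the entry $s$ in column $v$.
\begin{itemize}
\item If $s=L(u,v)$, these two hyperedges coincide, and $s$ survives with probability $1-p$.
\item Otherwise they are distinct, and $s$ survives with probability $(1-p)^2$.
\end{itemize}
Hence
\[\mu_{uv}=(n-w-1)(1-p)^2+(1-p)=(1+O(1/n))(n-w)(1-p)^2=\Theta(n),\]
using $n-w\ge\zeta n$ and $1-p=\Omega(1)$.

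Finally we apply \cref{freedman_inequality}. The indicator of a single hyperedge of $L$ affects at most two summands of the co-degree of $u,v$, since each hyperedge contains at most one edge at $u$ and one edge at $v$. Only $O(n)$ hyperedges have $K_i\neq 0$, and for each of them $K_i\le 2$. Thus $\sum_i p_iK_i^2=O(n)$. Taking $t=\gamma\mu_{uv}/4=\Theta(\gamma n)$, we get a deviation probability of
\[\exp\bigl(-\Omega(\gamma^2n^2/(n+\gamma n))\bigr)=\exp(-\Omega(\gamma^2 n)).\]
A union bound over the $O(n^2)$ pairs costs a factor $n^2$, which is absorbed since $\gamma^2n=\omega(\log n)$. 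Combined with the density step, this gives the claimed bound.

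The only delicate point is the bookkeeping. The $O(1/n)$ correction in $\mu_{uv}$, the density deviation, and the ratio $|N_{W}(u,v)|\dens^2$ all have to fit within $\gamma$, and they do because $1/n=o(\gamma)$. The parameter regime $\gamma=\omega(n^{-1/2}\log^{1/2}n)$ is exactly what is needed for the union bound over pairs to go through.
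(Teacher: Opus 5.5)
Your proposal is correct and follows essentially the paper's proof: Pittel's inequality to pass to $\bfB_p(L)$ with $p=\alpha$, an expected co-degree of $(1-p)+(n-w-1)(1-p)^2$ or $(n-w)(1-p)^2$, \cref{freedman_inequality} applied with $O(n)$ relevant hyperedges of bounded influence, and a union bound over pairs. Two minor remarks: first, the density step is unnecessary, because $\bfP_m(L)$ has exactly $m$ hyperedges and therefore density exactly $1-\alpha$ (if you keep the step, the window should be relative, $(1-p)(1\pm\gamma/10)$, since an absolute $\pm\gamma/10$ is too coarse when $1-\alpha$ is a small constant); second, column--symbol pairs are not literally symmetric to row--column pairs, because when $s$ already appears in column $c$ of $W$ there is no special common neighbour and $\mu=(n-w)(1-p)^2$, which is exactly the paper's second case.
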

\begin{proof}
We need to show that $G^W(\bfP_m(L))$ is $W$-relative $\gamma$-quasirandom with probability at least $1 - \exp(-\Omega(\gamma^2 n))$.
As in the proof of the previous lemma, let $\bfB_p(L)$ be a random subset of the hyperedges of $L$ where each hyperedge is taken independently at random with probability $p = m/(n(n-w)) = \alpha$. Let $\bfS_{u,v}$ be the set of common neighbours of $u$ and $v$ in the graph $G^W(\bfB_p(L))$. It suffices to show that for every
$u, v$ that are in different parts of $G^W(\bfB_p(L))$, it holds that $\Pr[|\bfS_{u,v}| \notin (1 \pm \gamma)(n-w)(1 - \alpha)^2] \leq \exp(-\Omega(\gamma^2 n))$.
A union bound over all pairs of vertices \( u, v \) and Pittel’s inequality would then complete the proof.

Let $\bfX = |\bfS_{u,v}|$ and recall that $|N_W(u,v)|=n-w$ by \cref{rem:triangles-and-common-neighbors-in-latin-rectangle}. We distinguish two cases based on whether $u,v$ are connected by an edge in $G^W$. 

If $u,v$ are neighbours in $G^W$, let $x$ be the unique vertex such that $uvx$ is a hyperedge of $L$. Then $\Pr[x\in\bfS_{u,v}]=1-p$. On the other hand, for all $x'\in N_W(u,v)$ different from $x$ we have $\Pr[x'\in \bfS_{u,v}]=(1-p)^2$. Thus, we get
\[
\mu := \mb{E}[\bfX] = (1 - p) + (n -w- 1)(1 - p)^2 = \Theta(n).
\]
There are $2(n-w)-1$ hyperedges in $L$ that affect $\bfX$, and adding or removing such a hyperedge from $\bfB_p(L)$ can affect $\bfX$ by at most 1. 

Otherwise, $u,v$ are not neighbours in $G^W$, so we get
\[
\mu := \mb{E}[\bfX] = (n -w)(1 - p)^2 = \Theta(n).
\]
In this case, we have $2(n-w)$ hyperedges in $L$ that each affect $\bfX$ by at most 1.
We finish both cases by \Cref{freedman_inequality}.
\end{proof}

We will also need a lower bound on the number of completions of a Latin rectangle to a Latin square, as follows.

\begin{theorem}
\label{thm:completions-of-rectangle}
Fix any $\zeta\in (0,1)$ and let $w\leq (1-\zeta)n$. Consider any $w\times n$ Latin rectangle $W\in \cL^w_n$.
The number of completions of $W$ to a Latin square is at least
\[|\cL_n(W)|\geq \left(\frac{n-w}{e^2}\right)^{n(n-w)}.\]
\end{theorem}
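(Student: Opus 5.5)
We build the completion one row at a time and multiply the number of choices at each step. The completion of $W$ is a sequence of rows $w+1,\dots,n$, each a permutation extending the current Latin rectangle. If the current rectangle has $k$ rows, with $w\le k<n$, we form the bipartite graph $B_k$ between columns and symbols. In $B_k$, column $c$ is joined to symbol $s$ exactly when $s$ does not yet appear in column $c$.

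Each column has exactly $n-k$ missing symbols, and each symbol is missing from exactly $n-k$ columns. So $B_k$ is $(n-k)$-regular on $n+n$ vertices. Its perfect matchings are precisely the valid choices of row $k+1$. By Hall's theorem a regular bipartite graph always has a perfect matching, so every Latin rectangle extends to a Latin square (this is the classical argument of Hall). Counting completions sequentially therefore gives
\[|\cL_n(W)|=\sum_{\text{choices}}1\ \ge\ \prod_{k=w}^{n-1}\ \min_{B}\ \operatorname{per}(B),\]
where the minimum is over $(n-k)$-regular $n\times n$ bipartite graphs, viewed as $0/1$ matrices. This holds because every partial rectangle reached along the way has at least that many extensions.

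The key input is the van der Waerden bound (Egorychev--Falikman). Dividing the adjacency matrix by $n-k$ gives a doubly stochastic matrix, so the number of perfect matchings is at least $(n-k)^n\, n!/n^n\ge \left((n-k)/e\right)^n$. Hence
\[|\cL_n(W)|\ \ge\ \prod_{j=1}^{n-w}\left(\frac{j}{e}\right)^{n}=\left(\frac{(n-w)!}{e^{\,n-w}}\right)^{n}.\]

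It remains to compare this with $\left((n-w)/e^2\right)^{n(n-w)}$. Write $N=n-w$. Using $N!\ge (N/e)^N$, the bound is at least $\left(N^N/e^{2N}\right)^n=\left(N/e^2\right)^{nN}$, which is exactly the claimed bound. No asymptotic slack is needed, and the hypothesis $w\le(1-\zeta)n$ is not even used here; it presumably matters only where the theorem is applied.

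The only nontrivial ingredient is the permanent lower bound, which we cite rather than prove. We also check that the elementary inequality $n!/n^n\ge e^{-n}$ holds exactly, not just asymptotically. It does, since $e^n=\sum_k n^k/k!\ge n^n/n!$. The same argument gives $N!\ge (N/e)^N$. So the argument is complete and uniform in $n$.
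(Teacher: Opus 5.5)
Your proposal is correct and follows essentially the same route as the paper: you count row-by-row extensions, bound each step by $(n-k)^n\, n!/n^n$ via Egorychev--Falikman applied to the $(n-k)$-regular column--symbol graph, and finish with the elementary inequality $a!\ge (a/e)^a$. The only difference is the order of simplification: you apply $n!/n^n\ge e^{-n}$ at each step before multiplying, whereas the paper first writes the product as $\bigl((n-w)!\bigr)^n\,(n!/n^n)^{n-w}$. This is cosmetic, and your remark that $w\le(1-\zeta)n$ is not needed here is also accurate.
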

\begin{proof}
It follows from the Egorychev--Falikman lower bounds on the permanent~\cite{Ego81,Fal81} that the number of extensions by one row of a $k\times n$ Latin rectangle is at least $(n-k)^n\, n!/n^n$ (see for example~\cite[Theorem 4.A6]{cameron1976parallelisms}). Hence, using Stirling's inequality $a!\geq (a/e)^{\textnormal{a}}$ which holds for all $a\in\mathbb{Z}^+$, the number of ways to complete a $w\times n$ Latin rectangle is at least 
\[ \prod_{k=w}^{n-1} \left( (n-k)^n\, \frac{n!}{n^n}\right) 
= \big( (n-w)!\big)^n\, \left(\frac{n!}{n^n}\right)^{n-w}
 \geq  \left(\frac{n-w}{e^2}\right)^{n(n-w)}.  \qedhere
 \]
\end{proof}

Given a Latin rectangle $W$, let $P\in \cP^W$ be a partial Latin rectangle such that $G^W(P)$ is $W$-relative quasirandom and $W$-relative triangle-typical.
The next lemma gives an upper bound on the number of completions of the partial Latin square $W \cup P$  to a Latin square. Its proof relies on the entropy method (see, e.g.\ \cite{samotij2026entropy}).
\begin{lemma}
\label{lem:completions_of_partial_latins}
Fix any $\alpha,\zeta \in (0, 1)$ with $1- \alpha = \Omega(1)$ and $\zeta = \Omega(1)$. Let $w\leq (1 - \zeta)n$, $m=\alpha(n-w)n,$ $\gamma = o(1)$, and $\varepsilon = n^{-1} \log n $. Consider any $w\times n$ Latin rectangle $W\in\cL_n^w$. Then for any partial Latin square $P \in \mc P^{W,\gamma}_m \cap \mc P^{W,\triangle:\varepsilon}_m$,
the number  $|\cL_n(W\cup P)|$ of completions of $W\cup P$ to an $n \times n$ Latin square satisfies
\[
|\cL_n(W\cup P)| \leq \left(\frac{(1 - \alpha)^{2}(n-w)}{e^2}\right)^{(1 - \alpha)n(n-w)} \cdot \exp\left(\frac{n \log^2 n}{1 - \alpha - o(1)} \right).\]
\end{lemma}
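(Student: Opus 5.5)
We follow the entropy-method argument of Lemma~4.7 in \cite{kwan2025parities} (going back to Linial--Luria style upper bounds on Latin square counts), now run only over the residual rows. Let $\bfL$ be a uniformly random completion in $\cL_n(W\cup P)$. The set $Q$ of cells not yet filled is the set of pairs $(r,c)$ with $r\notin[w]$ and $rc$ an edge of $G^W(P)$. There are $(1-\alpha)n(n-w)$ of them, since each row has $n$ cells and $P$ fills $\alpha n(n-w)$ of the $n(n-w)$ cells outside $W$. We have $\log|\cL_n(W\cup P)| = H(\bfL)$, and we expose the symbols $\bfL(r,c)$ for $(r,c)\in Q$ in a uniformly random order, determined by independent uniform labels $\lambda_{rc}\in[0,1]$. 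The chain rule gives
\[
H(\bfL)=\sum_{(r,c)\in Q}\E_\lambda\, H\big(\bfL(r,c)\mid \bfL(r',c') : \lambda_{r'c'}<\lambda_{rc}\big).
\]
We bound each conditional entropy by $\E\log \mathbf N_{rc}$. Here $\mathbf N_{rc}$ counts the symbols $s$ such that $rs$ and $cs$ are edges of $G^W(P)$ and the cells $(r,c')$, $(r',c)$ realising $s$ in row $r$ and column $c$ of $\bfL$ have label larger than $\lambda_{rc}$. (The actual symbol $\bfL(r,c)$ is always counted.)

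\textbf{Step 1: compute the expectation of $\mathbf N_{rc}$.} Fix $\lambda_{rc}=x$. The candidate symbols are the common neighbours $N_{W\cup P}(r,c)$. By $W$-relative $\gamma$-quasirandomness there are $(1\pm\gamma)(n-w)(1-\alpha)^2$ of them, using $\dens(G^W(P))=1-\alpha$ and $|N_W(r,c)|=n-w$ from \cref{rem:triangles-and-common-neighbors-in-latin-rectangle}. Each symbol other than the true one survives with probability $(1-x)^2$, because its two cells are distinct and carry independent labels. So
\[
\E[\mathbf N_{rc}\mid \lambda_{rc}=x]\le 1+(1+\gamma)(1-\alpha)^2(n-w)(1-x)^2.
\]

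\textbf{Step 2: apply Jensen and integrate over $x$.} Jensen gives $\E\log\mathbf N_{rc}\le \int_0^1\log\big(1+(1+\gamma)(1-\alpha)^2(n-w)(1-x)^2\big)\,dx$. Since $\int_0^1 \log\big((1-x)^2\big)\,dx=-2$, this equals
\[
\log\big((1-\alpha)^2(n-w)\big)-2+O(\gamma)+O\big(\log n/((1-\alpha)^2(n-w))\big).
\]
The last term comes from the region $x$ close to $1$, where the ``$1+$'' matters, and is $O(n^{-1}\log n)$. Summing over the $(1-\alpha)n(n-w)$ cells produces the main term $\big((1-\alpha)^2(n-w)/e^2\big)^{(1-\alpha)n(n-w)}$.

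\textbf{Step 3: control the error terms, which is the main obstacle.} Summed over all cells, the error becomes $\exp\big(O(\gamma n^2)\big)$, and with merely $\gamma=o(1)$ this is far too large. The stated $\exp\big(n\log^2 n/(1-\alpha-o(1))\big)$ requires per-cell errors of order $\log^2 n/n$. Following \cite{kwan2025parities}, the remedy is to exploit triangle-typicality with $\varepsilon=n^{-1}\log n$. We use convexity to replace the individual codegrees $|N_{W\cup P}(r,c)|$ by their average. Applying Jensen again to $\sum_{(r,c)\in Q}\int_0^1\log(\cdots)\,dx$, now across cells, the total is governed by $\sum_{(r,c)\in Q}|N_{W\cup P}(r,c)|=X_\triangle(G^W(P))\in(1\pm\varepsilon)n(n-w)^2(1-\alpha)^3$. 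So the averaged codegree is $(1\pm\varepsilon)(1-\alpha)^2(n-w)$, with $\varepsilon$ rather than $\gamma$ entering the exponent. Quasirandomness is then needed only to keep all codegrees of order $\Theta(n)$, which makes the $\log(1+\cdots)$ and $(1-x)^2$ approximations uniformly valid. The remaining $O(\log n/n)$ per-cell losses, together with the $1+\varepsilon$ factor, sum to at most $n\log^2 n/(1-\alpha-o(1))$. The delicate part is carrying out this averaging rigorously, namely verifying concavity of $t\mapsto\int_0^1\log(1+t(1-x)^2)\,dx$ and tracking the $1/(1-\alpha)$ dependence.
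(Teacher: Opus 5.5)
\textbf{There is a genuine gap.} Your overall architecture matches the paper's: chain rule, a conditional support bound, Jensen per cell, then Jensen across cells using triangle-typicality, with quasirandomness only keeping codegrees of order $n$. What breaks is Step~2, and the cause is your choice of ordering. With independent per-cell labels, a competing symbol survives with probability $(1-x)^2$, and
\[
\int_0^1\log\bigl(1+C(1-x)^2\bigr)\,dx=\log(1+C)-2+\frac{2\arctan\sqrt C}{\sqrt C}.
\]
So the correction is about $\pi/\sqrt C$, i.e.\ $\Theta(n^{-1/2})$ per cell, not $O(\log n/n)$. Summed over the $(1-\alpha)n(n-w)=\Theta(n^2)$ empty cells, this costs a factor $\exp(\Theta(n^{3/2}))$, far more than the allowed $\exp\bigl(n\log^2 n/(1-\alpha-o(1))\bigr)$.

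This loss is not an artefact of Jensen, and the averaging in Step~3 cannot remove it. The set of $x$ with $(1-x)^2<1/C$ has measure $C^{-1/2}$. On that set $\log\mathbf N_{rc}\ge 0$, while the ideal term $\log\bigl(C(1-x)^2\bigr)$ is very negative. The loss is therefore intrinsic to a uniformly random order on the cells, even if all codegrees were exactly equal.

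The paper avoids this with a Br\'egman/Radhakrishnan-style order. Take independent uniform labels $\mu_x$ for rows and $\nu_y$ for columns. Process rows in (decreasing) order of $\mu$, and within a row process cells in order of $\nu$. For $e=(x,y)$, a competing symbol $z\in Q_e$ is then still available iff $\mu_{x_z}<\mu_x$ and $\nu_{y_z}<\nu_y$. These are independent events, so the survival probability is $\mu_x\nu_y$, a product of two independent uniforms.

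The gain comes from the lower tail. One has $\Pr[ts<\delta]=\delta(1+\log(1/\delta))$ rather than $\sqrt\delta$, and consequently
\[
\int_0^1\!\!\int_0^1\log(1+Cts)\,dt\,ds=\log(1+C)-2+\frac{\log(1+C)-\Li_2(-C)}{C}=\log C-2+O\Bigl(\frac{\log^2 C}{C}\Bigr).
\]
Quasirandomness gives $|Q_e|\ge(1-\alpha-o(1))^2(n-w)$, so the per-cell error is $\log^2 n/\bigl((1-\alpha-o(1))^2(n-w)\bigr)$. Over $|E|=(1-\alpha)n(n-w)$ cells this sums to exactly $n\log^2 n/(1-\alpha-o(1))$. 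The $\sum_e\log|Q_e|$ term is then handled as in your Step~3, by Jensen and triangle-typicality, so that $\varepsilon$ rather than $\gamma$ enters the main term.
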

\begin{proof}
For a row $x$ (respectively, column $y$), we write $r_x$ (respectively, $c_y$) for the number of empty cells in that row (respectively, column), in $W\cup P$. Then for an empty cell $e = (x,y)$, we write $Q_{x,y}$ for the set of symbols that do not appear in the row or column of $e$ in $W\cup P$. Note that $Q_{x,y}$ is precisely the set of common neighbours of $x$ and $y$ in $G^W(P)$. Then, let $\bfL(W\cup P)$ be a uniformly random completion of $W\cup P$. We will estimate the entropy $H[\bfL(W\cup P)] = \log |\mc L_n(W\cup P)|$ of $\bfL(W\cup P)$.

    Let $E \subseteq [n]^2$ be the set of empty cells in $W\cup P$. For each $e = (x,y) \in E$, let $\bfz_e$ be the symbol in cell $e$ in $\bfL(W\cup P)$. So the sequence $(\bfz_e)_{e \in E}$ determines $\bfL(W\cup P)$. For any total ordering $\prec$ on $E$, we have that by the chain rule for entropy~\cite{MR2239987},
    \begin{align}
    \label{equation:entropy-cell-by-cell}
    H[\bfL(W\cup P)] = \sum_{e \in E} H\big[\bfz_e \,\big|\,(\bfz_{e'} : e' \prec e)\big].
    \end{align}
    Now, consider a pair of sequences $\mu = (\mu_x)_x, \nu = (\nu_y)_y\in [0,1]^n$ (of real numbers in the range $[0,1]$, which we assume to be all distinct from each other). Let $\lambda = (\lambda_{x,y})_{x,y}$ be the array defined by $\lambda_{x,y} = (\mu_x, \nu_y)$. We can use $\lambda$ to define a total order $\prec_\lambda$ on $E$, via reverse lexicographic order on the pairs $\lambda_{x,y}$. To be precise, write $(x',y')\prec_\lambda(x,y)$ when 
    $\mu_{x'}>\mu_{x}$ or when $x'=x$ and $\nu_{y'} > \nu_y$. 
    Let $\bfR_e(\lambda)$ be an upper bound on the conditional support size
    \[\Big|\supp\big(\bfz_e\,\big|\,(\bfz_{e'} : e' \prec_\lambda e)\big)\Big|\]
    defined as follows: $\bfR_{e}(\lambda)$ for $e=(x,y)$ is $1$ plus the number of symbols $z \in Q_e \setminus \bfz_e$ for which $\mu_{x_z} < \mu_x$ and $\nu_{y_z} < \nu_y$, where $x_z$ and $y_z$ are the row and column such that cells $(x_z,y)$ and $(x,y_z)$ contain symbol $z$ in $\bfL(W \cup P)$.

    Since $\bfR_e(\lambda)$ is an upper bound on $|\supp(\bfz_e\,|\,(\bfz_{e'}:e' \prec_\lambda e))|$, we have
    \begin{align}
    \label{inequality:entropy-upper-bound}
    H\big[\bfz_e\,\big|\,(\bfz_{e'}:e' \prec_\lambda e)\big] \leq \mathbb{E}[\log{\bfR_e(\lambda)}].
    \end{align}
    It follows from~(\ref{equation:entropy-cell-by-cell}) and~(\ref{inequality:entropy-upper-bound}) that
    \[
    H[\bfL(W\cup P)] \leq \sum_{e \in E} \mathbb{E}[\log{\bfR_e(\lambda)}].
    \]
    This is true for any fixed $\lambda$, so it is also true if $\lambda$ is chosen randomly: let $\bflambda \in \Big( [0,1]^2\Big)^{n^2}$ be obtained via $2n$ independent random variables $\bfmu_x, \bfnu_y$ each uniformly distributed in $[0,1]$ (noting that with probability $1$ they are all distinct). Then
    \begin{align}
    \label{inequality:entropy-random-lambda-upper-bound}
     H[\bfL(W\cup P)] \leq \sum_{e \in E} \mathbb{E}[\log{\bfR}_e(\bflambda)]. 
    \end{align}
    Next, for any completion $L$ of $W\cup P$, any $e = (x,y)$ and any $\lambda_e = (\mu_x, \nu_y) \in [0,1]^2$, let
    \[ R_e^{L,\lambda_e} = \mathbb{E}[\bfR_e(\bflambda)\, |\, \bfL(W\cup P) = L, \bflambda_e=\lambda_e].\]
    (Note that $\bflambda_e = \lambda_e$ occurs with probability zero, so formally we should condition on $\bflambda_e=\lambda_e \pm \mathrm{d} \lambda_e$ and take limits as $\mathrm{d} \lambda_e \rightarrow 0$ in what follows, but there are no continuity issues so we will ignore this detail.) By the definition of $\bfR_e(\bflambda)$ and linearity of expectation, we have
    \[ R_e^{L, \lambda_e} = 1 + (|Q_e| - 1) \mu_x \nu_y.\]
    By Jensen's inequality,
    \[ \mathbb{E}[\log{\bfR_e(\bflambda)} \,|\, \bfL(W\cup P) = L, \bflambda_e=\lambda_e] \leq \log{R_e^{L,\lambda_e}},\]
    so
    \begin{align*}
    \mathbb{E}[\log{\bfR_e(\bflambda)} \,|\, \bfL(W\cup P) = L] &\leq \mathbb{E}\Big[ \log{R_e^{L, \bflambda_e}} \Big]\\
    &= \int_{[0,1]^2} \log\big(1 + (|Q_e| - 1) \mu_x \nu_y\big) \,\mathrm{d} \lambda_e.\end{align*}
    For $C > 0$ we compute
    \[ \int_0^1 \int_0^1 \log(1 + Cts) \,\mathrm{d}t\, \mathrm{d}s = \log(1+C) - 2 + \frac{\log(1+C) - \Li_2(-C)}{C}, \]
    where
    \[ \Li_2(C)= - \int_0^C \frac{\log(1-t)}{t}\,\mathrm{d}t \]
    is an evaluation of the polylogarithm function, so, as $C \rightarrow \infty$,
    \begin{align*} -\Li_2(-C) =  \int_0^{-C} \frac{\log(1-t)}{t} \,\mathrm{d}t &= \int_0^{C} \frac{\log(1+t)}{t}\,\mathrm{d}t\\
    &\leq \int_{0}^1 \frac{\log(1+t)}{t}\, \mathrm{d}t + \sum_{t=1}^{C-1} \frac{\log(2+t)}{t} \leq \log^2(C+1)  + O(1).\end{align*}
    By definition $|Q_e| = |Q_{x,y}|$ is the number of common neighbours of $x$ and $y$ in $G^W(P)$. As $G^W(P)$ is $W$-relative $\gamma$-quasirandom, we have 
    \begin{align}
    \label{eq:bounds_Q_e}
    |Q_e| = (1 \pm \gamma)(n-w)(1 - \alpha)^2 \geq (n-w)(1-\alpha-o(1))^2.
    \end{align}
    Therefore, $|Q_e| \rightarrow \infty$ as $n \rightarrow \infty$, so we can substitute $C = |Q_e|-1$ and get
    \begin{align*} 
    \mathbb{E}[\log{\bfR_e(\bflambda)} \,|\, \bfL(W\cup P) = L]  &\leq \log{|Q_e|} - 2 + \frac{\log{|Q_e|} + \log^2{|Q_e|} + O(1) }{|Q_e|}\\ 
    &= \log{|Q_e|} - 2 +\frac{(1 + o(1)) \log^2{|Q_e|} }{|Q_e|}. 
    \end{align*}

Using (\ref{eq:bounds_Q_e}) and that $|Q_e| \leq n$, and noting that the expression above does not depend on the choice of $L$, we get
\begin{align}
\label{inequality:upper-bound-common-neighbors}
\mb{E}[\log \bfR_e(\bflambda)] \leq \log |Q_e| - 2 + \frac{\log^2 n}{(1-\alpha-o(1))^2(n-w)}.
\end{align}
Next, because $G^W(P)$ is $W$-relative $\varepsilon$-triangle-typical,  the number of triangles in $G^W(P)$ is:
\[
\sum_{e \in E} |Q_e| = (1 \pm \varepsilon)(1 - \alpha)^3n(n-w)^2.
\]
By Jensen's inequality and the fact that $|E|=(1-\alpha)n(n-w)$ we get:
\[
\sum_{e \in E} \log |Q_e| \leq |E| \log \left( \frac{\sum_{e \in E} |Q_e|}{|E|} \right)
\leq |E| \log \left( (1 + \varepsilon)(1 - \alpha)^2(n-w) \right).
\]
Combining this with \cref{inequality:entropy-random-lambda-upper-bound} and \cref{inequality:upper-bound-common-neighbors} we get
\begin{align*}
\log |\cL_n(W\cup P)| = H[\bfL(W \cup P)] &\leq \sum_{e \in E} \mb{E}[\log \bfR_e(\bflambda)] \leq \sum_{e \in E} \log |Q_e| + |E| \left(\frac{\log^2 n}{(1-\alpha-o(1))^2(n-w)} - 2\right)\\
&\leq (1 - \alpha)n(n-w) \bigg(\log \big((1 + \varepsilon)(1 - \alpha)^2 (n-w)\big) - 2\bigg)
+ \frac{n \log^2 n}{1 - \alpha - o(1)}.
\end{align*}
Thus,
\begin{align*}
|\cL_n(W\cup P)| &\leq \left(\frac{(1 + \varepsilon)(1 - \alpha)^{2}(n-w)}{e^2}\right)^{(1 - \alpha)n(n-w)} \cdot \exp\left( \frac{n \log^2 n}{1 - \alpha - o(1)} \right)\\
&\leq \left(\frac{(1 - \alpha)^{2}(n-w)}{e^2}\right)^{(1 - \alpha)n(n-w)} \cdot \exp\left( \varepsilon (1-\alpha) n(n-w)+\frac{n \log^2 n}{1 - \alpha - o(1)} \right)\\
&= \left(\frac{(1 - \alpha)^{2}(n-w)}{e^2}\right)^{(1 - \alpha)n(n-w)} \cdot \exp\left(\frac{n \log^2 n}{1 - \alpha - o(1)} \right),
\end{align*}
as $\varepsilon=n^{-1}\log n$ and so $\exp(\varepsilon (1-\alpha) n(n-w))=\exp(o(n\log^2 n))$.
\end{proof}

Finally, we need an estimate of the probability of a particular $W$-relative triangle-typical outcome of the triangle removal process.

\begin{lemma}
\label{lem:uniform_trp_results}
 Let $ \alpha,\zeta \in (0, 1)$. Let $w\leq (1 - \zeta)n$ and $m=\alpha (n-w)n$. Consider any $w\times n$ Latin rectangle $W\in\cL_n^w$ and any \( \vec{P} \in \vec{\mc P}^{W,\triangle:\varepsilon}_{m} \) for some \( \varepsilon \in (0,1) \). Then for \( \vec{\bfR} \sim \trp(W, m) \) and \(N = n(n-w)\), we have
\[
\Pr[\vec{\bfR} = \vec{P}] = (1 \pm 2\varepsilon)^{\alpha N} \exp (O(\log n)) \left( \frac{e^3}{n(n-w)^2} \right)^{\alpha N} (1 - \alpha)^{3N (1-\alpha)}.
\]
\end{lemma}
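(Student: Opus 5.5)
The probability that $\trp(W,m)$ outputs the specific ordered sequence $\vec P=(e_1,\dots,e_m)$ is a product over the steps. At step $i$ the process picks $e_i$ uniformly among the triangles of the current residual graph. Writing $P_{i}$ for the first $i$ hyperedges of $\vec P$ (so $P_0=\emptyset$), we get
\[
\Pr[\vec\bfR=\vec P]=\prod_{i=0}^{m-1}\frac{1}{X_\triangle(G^W(P_i))}.
\]
This is valid as long as each $e_{i+1}$ is a triangle of $G^W(P_i)$, which holds because $\vec P$ is a partial Latin rectangle not conflicting with $W$. The plan is to estimate each factor using triangle-typicality, and then evaluate the resulting product with Stirling-type estimates.

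First, $G^W(P_i)$ has exactly $e(G^W)-3i=3N-3i$ edges, since each added hyperedge removes three edges not covered by $W$. So $\dens(G^W(P_i))=1-i/N$ exactly. Since $\vec P\in\vec{\mc P}^{W,\triangle:\varepsilon}_m$, every prefix is $W$-relative $\varepsilon$-triangle-typical. By \cref{rem:triangles-and-common-neighbors-in-latin-rectangle} this means
\[
X_\triangle(G^W(P_i))\in(1\pm\varepsilon)\,n(n-w)^2(1-i/N)^3.
\]
Using $1/(1\pm\varepsilon)\in(1\pm2\varepsilon)$ for $\varepsilon$ small (or absorbing the constant into the stated form), we obtain
\[
\Pr[\vec\bfR=\vec P]=(1\pm2\varepsilon)^{m}\bigl(n(n-w)^2\bigr)^{-m}\prod_{i=0}^{m-1}(1-i/N)^{-3}.
\]

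It remains to evaluate $\prod_{i=0}^{m-1}(1-i/N)=N^{-m}\,N!/(N-m)!$. By Stirling's formula $a!=a^ae^{-a}\exp(O(\log a))$, with $m=\alpha N$,
\[
\frac{N!}{N^m(N-m)!}=\exp(O(\log N))\,\frac{N^Ne^{-N}}{N^m(N-m)^{N-m}e^{-(N-m)}}=\exp(O(\log n))\,e^{-\alpha N}(1-\alpha)^{-(1-\alpha)N}.
\]
Raising this to the power $-3$ gives $e^{3\alpha N}(1-\alpha)^{3(1-\alpha)N}\exp(O(\log n))$. Combining with the previous display yields the claimed formula, since $\log N=O(\log n)$. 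If $m$ is not an integer, we round it, which affects only the $\exp(O(\log n))$ factor.

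The main point needing care is the exact density identity. It relies on every hyperedge of $P$ covering three pairs that are genuinely edges of $G^W$ and distinct from the pairs covered by earlier hyperedges, which follows from $W\cup P$ being conflict-free. Beyond that, the remaining work is the routine Stirling bookkeeping, with the edge case of $N-m$ possibly small excluded because $1-\alpha>0$ is fixed.
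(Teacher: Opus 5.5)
Your proof is correct and is essentially the paper's argument: you write $\Pr[\vec\bfR=\vec P]$ as $\prod_{i<m}X_\triangle(G^W(P_i))^{-1}$, use $\dens(G^W(P_i))=1-i/N$ together with the triangle-typicality of every prefix, and then evaluate $\prod_{i<m}(1-i/N)^{-3}$. The only (cosmetic) difference is that you compute this product exactly as $\bigl(N^m(N-m)!/N!\bigr)^{3}$ via Stirling, whereas the paper compares $\sum_i\log(1-i/N)$ with $N\int_0^\alpha\log(1-t)\,\mathrm{d}t$ up to an $O(\log n)$ harmonic-sum error; both routes give the same $\exp(O(\log n))$ slack.
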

\begin{proof}
Since $\vec{P}\in \vec \cP_m^{W, \Delta:\varepsilon}$,
\begin{align*}
&\Pr[\vec{\bfR} =\vec{P}] \\
&= \prod_{i=0}^{\alpha N - 1} \frac{1}{(1 \pm \varepsilon)(1 - i/N)^3 n(n-w)^2} = (1 \pm 2\varepsilon)^{\alpha N} n^{-\alpha N}(n-w)^{-2\alpha N} \exp \left( -3 \sum_{i=0}^{\alpha N - 1} \log \left( 1 - \frac{i}{N} \right) \right).
\end{align*}
Note that 
\[ \sum_{i=0}^{\alpha N -1} \frac{1}{N} \log\Big(1 - \frac{i+1}{N}\Big) \leq \int_{0}^\alpha \log(1-t) \mathrm{d}t \leq  \sum_{i=0}^{\alpha N -1} \frac{1}{N} \log\Big(1 - \frac{i}{N}\Big). \]
We have
\[ \sum_{i=0}^{\alpha N - 1} \Bigg( \log\Big(1 - \frac{i}{N}\Big) -\log\Big(1 - \frac{i+1}{N}\Big)   \Bigg) = \sum_{i=0}^{\alpha N - 1}  \log\Big( 1 + \frac{1}{N- (i+1)} \Big) \leq \sum_{i=0}^{\alpha N - 1}  \frac{1}{N- (i+1)} = O(\log{n}).\]
Thus, using the fact that $ \log{x}$ has antiderivative $ x(\log{x} - 1)$, we get
\begin{align*}
3 \sum_{i=0}^{\alpha N -1} \log\Big(1 - \frac{i}{N}\Big) &=   3N \int_{0}^\alpha \log(1-t) \,\mathrm{d}t \pm O(\log{n})\\
&=  3 N \int_{1-\alpha}^1 \log{s}\ \mathrm{d}s \pm O(\log{n})\\
&= 3N \big( - \alpha - (1-\alpha)\log(1-\alpha)\big) \pm O (\log{n}),\end{align*}
so
\[
\exp \left( -3 \sum_{i=0}^{\alpha N - 1} \log \left( 1 - \frac{i}{N} \right) \right) = e^{3N \alpha} (1 - \alpha)^{3N (1-\alpha)}\, \exp (O(\log n)).
\]
Therefore,
\[
\Pr[\vec{\bfR} = \vec{P}] = (1 \pm 2\varepsilon)^{\alpha N} \exp (O(\log n)) \left( \frac{e^3}{n(n-w)^2} \right)^{\alpha N} (1 - \alpha)^{3N (1-\alpha)}.
\qedhere\]
\end{proof}

We are now ready to prove our approximation lemma by combining the helper lemmas stated so far.

\begin{proof}[Proof of \cref{lem:random_LS_to_triangle_removal}]
Since $\vec \cU$ is $\rho$-inherited from $\cT$, for each $L\in \cT$, we have $\Pr[\vec\bfP_m(L)\in \vec\cU]\geq \rho.$ Let $\cS\coloneq\vec\cU\cap \vec{\mc P}^{W,\triangle:\varepsilon}_m\cap \vec{\mc P}^{W,\gamma}_m$ for $\varepsilon := n^{-1}\log{n}$ and $\gamma:=n^{-1/4}$. Then by \cref{lem:epsilon-triangular} and \cref{lem:quasirandomness}, we have
\begin{align}
\label{eqn:lower_bound_S}
\Pr[\vec\bfP_m(L)\in \cS]\geq \rho - \exp(-\Omega(\log^2{n})) = \rho(1-o(1)).
\end{align}
Next, let $\mathbf{L} \sim \on{Unif}(\cL_n^{\overline{w}}(W))$ and consider the probability $\Pr[\vec\bfP_m(\bfL)\in \cS]$. We can bound this probability from below by $\Pr[\bfL\in\cT]\cdot \rho(1-o(1))$ due to \cref{eqn:lower_bound_S}. As $\Pr[\vec\bfP_m(\bfL)\in \cS]=\sum_{\vec P\in\cS}\Pr[\vec\bfP_m(\bfL)=\vec P]$, we get:
\[\Pr[\bfL\in\cT]\leq \frac{1}{\rho(1-o(1))}\sum_{\vec P\in\cS}\Pr[\vec\bfP_m(\bfL)=\vec P].\]
Letting $\cL_n(W\cup\vec P)$ be the set of completions of the union of $W$ and the unordered analogue of $\vec P$, and setting $N=(n-w)n$, we get
\[\Pr[\bfL\in\cT]\leq \frac{1}{\rho(1-o(1))}\sum_{\vec P\in\cS} \frac{|\cL_n(W\cup\vec P)|}{|\cL_n(W)|}\cdot\frac{(N-m)!}{N!}\]
since for each $(n-w) \times n$ Latin rectangle, there are $N(N-1)\dots (N-m+1)$ ways to select an (ordered) sequence of $m$ hyperedges.

Using \cref{thm:completions-of-rectangle} and \cref{lem:completions_of_partial_latins}, we have
\begin{align*}
\Pr[\bfL\in\cT] &\leq \frac{1+o(1)}{\rho}\sum_{\vec P\in\cS} \left(\frac{(1 - \alpha)^{2}(n-w)}{e^2}\right)^{(1 - \alpha)N} \exp\left(\frac{n \log^2 n}{1 - \alpha - o(1)} \right) \cdot \left(\frac{e^2}{n-w}\right)^{N}\cdot\frac{(N-m)!}{N!}\\
&\leq \sum_{\vec P\in\cS} \rho^{-1}\exp\left(\frac{n \log^2 n}{1 - \alpha - o(1)}\right)\cdot \left(\frac{e^2}{n-w}\right)^{\alpha N}\cdot (1-\alpha)^{2N(1-\alpha)}\cdot\frac{(N-m)!}{N!}.
\end{align*}
By Stirling's approximation and since $\rho \geq e^{-o(\log^2{n})}$, we have
\begin{align*}
\Pr[\bfL\in\cT]&\leq \sum_{\vec P\in\cS}\exp\left(\frac{n \log^2 n}{1 - \alpha-o(1)}\right)\cdot \left(\frac{e^2}{n-w}\right)^{\alpha N}\cdot (1-\alpha)^{2N(1-\alpha)}\cdot\left(\frac{N(1-\alpha)}{e}\right)^{N(1-\alpha)}\cdot \left(\frac{e}{N}\right)^N\\
&= \sum_{\vec P\in\cS}\exp\left(\frac{n \log^2 n}{1 - \alpha - o(1)}\right)\cdot \left(\frac{e^3}{n(n-w)^2}\right)^{\alpha N}\cdot (1-\alpha)^{3N(1-\alpha)}.
\end{align*}
On the other hand, for each $\vec P \in\cS$ by \cref{lem:uniform_trp_results} we have:
\begin{align*}
\Pr[\vec{\bfR} = \vec{P}] &\geq (1 - 2\varepsilon)^{\alpha N} \exp (O(\log n)) \left( \frac{e^3}{n(n-w)^2} \right)^{\alpha N} (1 - \alpha)^{3N (1-\alpha)}\\
&= \exp \big(O(\log{n})-(2+o(1))\alpha (n-w)\log n\big) \left( \frac{e^3}{n(n-w)^2} \right)^{\alpha N} (1 - \alpha)^{3N (1-\alpha)}.
\end{align*}
Therefore,
\[\Pr[\bfL\in\cT]\leq \sum_{\vec P\in\cS}\exp\left(\frac{n \log^2 n}{1 - \alpha - o(1)}\right)\Pr[\vec{\bfR} = \vec{P}]\leq \exp\left(\frac{n \log^2 n}{1 - \alpha - o(1)}\right)\Pr[\vec{\bfR} \in \vec{\cU}],\]
since $\cS\subseteq \vec{\cU}$.
\end{proof}
\subsection{A corollary in terms of binomial random hypergraphs}

\label{subsec:cor}

In this subsection, we state a corollary of our approximation lemma that allows us to transfer results about binomial random hypergraphs to results about random Latin squares. To do this, we introduce some minor notational definitions.

\begin{definition}[Latin rectangles with arbitrarily indexed rows]
\label{def:completions-arbitrary}
For a set $\cV \subseteq [n]$, we denote by $\cL_n^\cV$ the set of Latin rectangles with rows indexed by $\cV$ and columns indexed by $[n]$, and we let $\cL_n^{\overline{\cV}}:= \cL_n^{[n]\setminus \cV}$. For any $W \in \cL_n^\cV$, we denote by $\cL_n(W)$ the set of completions of $W$, that is, the set of $n\times n$ Latin squares whose rows indexed by $\cV$ induce $W$. We further denote by $\cL_n^{\overline{\cV}}(W)$ the set of Latin rectangles obtained from $\cL_n(W)$ by removing the rows indexed by $\cV$ from each Latin square. We similarly define $\cP^{\overline{\cV}}_n$ as the analogue of $\cP^{\overline{w}}_n$ (Definition~\ref{def:partial-latin-rectangles}).
\end{definition}
Note that we stated \cref{lem:random_LS_to_triangle_removal} only for rectangles $W \in \cL^w_n$, but the same proof goes through for any $\cV \subseteq [n]$ and any $W \in \cL^\cV_n$ by permuting the rows.

Recall that the residual graph $G^W$ of a (partial) Latin rectangle $W$ (\cref{def:residue-graph}) records the pairs of vertices that are not covered by $W$.

\begin{definition}
\label{def:partial-random-hypergraph}
Let $W$ be a $w\times n$ Latin rectangle. We denote by $G^{W,(3)}$ the 3-partite 3-uniform hypergraph on the same vertex set as $G^W$, where we include an edge $ijk$ if and only if $ijk$ is a triangle in $G^W.$
Let ${\mb G}^{(3)}(W, p)$ be the random subgraph of $G^{W,(3)}$ obtained by including each hyperedge independently with probability $p.$ 
\end{definition}

We will also need the notion of $(\rho,m)$-inheritedness of properties (\cref{def:inherited}), which naturally generalizes to (partial) Latin rectangles indexed by $\cV$.

A property $\cU$ of partial Latin rectangles is monotone decreasing if $P \in \cU$ and $P' \subseteq P$ imply $P' \in \cU$ (i.e., ``removing hyperedges doesn't hurt''), and monotone increasing if $P \in \cU$ and $P' \supseteq P$ imply $P' \in \cU$ (i.e., ``adding hyperedges doesn't hurt'').
\begin{lemma}[Random Latin squares to binomial random hypergraphs]
\label{lem:LS-to-hypergraphs}
Let $\alpha, \zeta \in (0,1)$ with  $\zeta = \Omega(1)$ and $\alpha=  1- \Omega(1)$. Consider some subset $\cV\subseteq [n]$ of rows with $w=|\cV|$ and $w \leq (1-\zeta)n$, and consider a $\cV\times n$ Latin rectangle $W$. Let $\mc U \subseteq \cP^{\overline{\cV}}_n$ be a property of subgraphs of $K^{(3)}_{n,n,n}$ and let $\cT \subseteq \cL_n^{\overline{\cV}}(W)$, and suppose that $\mc U$ is \emph{$(\rho,m)$-inherited} from $\cT$, for $m=\alpha n (n-w)$ and  $\rho\ge \exp(-o(\log^2 n))$. Let $\bfL \sim \unif (\cL^{\overline{\cV}}_n(W))$ be a random Latin rectangle sampled uniformly from all Latin rectangles with rows indexed by $[n] \setminus \cV$ whose union with $W$ is a Latin square.
\begin{enumerate}
    \item \label{LS-to-hyper-decreasing} If $\cU$ is a monotone decreasing property, we have \[ \mathbb{P}[\mathbf{L} \in \cT] \leq \exp\big(O(n \log^2 {n})\big)  \mathbb{P}[\mathbf{G^*} \in \cU],\]
    where $\bfG \sim \mb G^{(3)}(W,\alpha/(n-w))$, and $\bfG^*$ is obtained from $\bfG$ by deleting all hyperedges involved in conflicts.
    \item \label{LS-to-hyper-increasing} If $\cU$ is a monotone increasing property, then for any $\gamma \in (0,1)$ such that $\frac{\gamma}{(1+\gamma)^2} \geq \frac{12 \alpha}{\zeta}$ and $\gamma^2 \alpha n^2 = \omega( n\log^2{n})$, we have \[ \mathbb{P}[\mathbf{L} \in \cT] \leq \exp\big(O(n \log^2 {n})\big)  \mathbb{P}[\mathbf{G} \in \cU] + \exp(-\Omega(\gamma^2 \alpha n^2)),\]
    where $\bfG \sim \mb G^{(3)}(W,(1+\gamma)\alpha /(n-w))$.
\end{enumerate}
\end{lemma}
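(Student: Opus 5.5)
The plan is to reduce both parts to \cref{lem:random_LS_to_triangle_removal}, applied to the ordered version $\vec\cU$ of $\cU$ (all orderings of members of $\cU$). Since $\cU$ is $(\rho,m)$-inherited from $\cT$, $\vec\cU$ is too. As $n\log^2 n/(1-\alpha-o(1))=O(n\log^2 n)$, this gives $\Pr[\bfL\in\cT]\le \exp(O(n\log^2 n))\Pr[\bfR\in\cU]$ with $\bfR\sim\trp(W,m)$ unordered. It remains to compare the triangle removal process with the binomial hypergraph, in the two monotone directions. Throughout, $G^{W,(3)}$ has exactly $N'=n(n-w)^2$ hyperedges by \cref{rem:triangles-and-common-neighbors-in-latin-rectangle}. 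Hence $\bfG\sim\mb G^{(3)}(W,p)$ has about $pN'$ hyperedges, and $p=\alpha/(n-w)$ corresponds to $m=\alpha n(n-w)$ expected hyperedges.

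\textbf{Decreasing case.} I will couple so that $\bfG^*\subseteq \bfR$ with probability $\exp(-O(n\log^2 n))$, or at least show $\Pr[\bfG^*\in\cU]\ge \exp(-O(n\log^2n))\Pr[\bfR\in\cU]$. The cleanest route follows \cite{kwan2025parities}. Condition on the event, of probability $\exp(-O(n\log^2 n))$, that $\bfG^*$ has a specific structure. More concretely, use that for every fixed conflict-free $P$ with $m$ hyperedges, $\Pr[\bfG^*\supseteq P]\ge \Pr[P\subseteq \bfG,\ \text{no hyperedge of }G^{W,(3)}\text{ conflicting with }P\text{ is in }\bfG]$. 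This equals $p^m(1-p)^{\#\text{conflicts}}$. The number of hyperedges conflicting with $P$ is at most $3m(n-w)$, so $(1-p)^{3m(n-w)}=e^{-O(m\alpha)}=e^{-O(n^2)}$.

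This bound is too crude, so I must be more careful. The right comparison is with \cref{lem:uniform_trp_results}: $\Pr[\bfR=P]\approx m!\,(e^3/(n(n-w)^2))^{m}(1-\alpha)^{3N(1-\alpha)}$ (summing over orderings). Meanwhile $\Pr[\bfG^*= P\text{ on a ``nice'' event}]$ is computed as $p^m(1-p)^{N'-m-\#\{\text{edges conflicting only}\ldots\}}$. Matching these two up to $e^{O(n\log^2 n)}$ uses Stirling together with triangle-typicality of $P$. Triangle-typicality (which holds for most outcomes of $\bfR$ by the same reasoning as \cref{lem:epsilon-triangular}) controls the count of hyperedges of $G^{W,(3)}$ avoiding the pairs covered by $P$; this count is $(1\pm\varepsilon)(1-\alpha)^3N'$. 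Then use monotonicity: $\bfG^*\supseteq P\in\cU$ is not needed, only $\bfG^*\subseteq$ something. I would instead restrict to $\bfG^*=P$ exactly, and sum over triangle-typical $P\in\cU$. This is the step I expect to be the main obstacle, namely the precise bookkeeping of the $(1-p)$ factors against $(1-\alpha)^{3N(1-\alpha)}$. Where exact matching fails, I would fall back on monotone decreasing to allow $\bfG^*\subseteq P$ with a small loss.

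\textbf{Increasing case.} Here I will couple $\trp(W,m)$ inside $\bfG\sim\mb G^{(3)}(W,(1+\gamma)\alpha/(n-w))$. Run the removal process and, at each step, note that the chosen triangle is uniform among at least $(1-\alpha)^3N'(1-o(1))$ available hyperedges when typical. Since $\alpha$ is small compared to $\zeta$ (this is where $\gamma/(1+\gamma)^2\ge 12\alpha/\zeta$ enters), each step's available set is at least a $(1-O(\alpha/\zeta))$ fraction of $G^{W,(3)}$. This lets me use the standard ``embedding by rejection'': sample $\bfG$ as a uniformly random sequence of hyperedges, and take at each step the first unused one that is still a triangle. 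With the stated margin, $m$ steps consume at most $(1+\gamma)\alpha N'/(n-w)\cdot(n-w)$ hyperedges except with probability $\exp(-\Omega(\gamma^2\alpha n^2))$, by \cref{freedman_inequality} or Chernoff. On success, $\bfR\subseteq\bfG$, so monotone increasing gives $\Pr[\bfR\in\cU]\le\Pr[\bfG\in\cU]+\exp(-\Omega(\gamma^2\alpha n^2))$. The condition $\gamma^2\alpha n^2=\omega(n\log^2n)$ ensures that the error term survives multiplication by $e^{O(n\log^2n)}$, giving the stated bound.
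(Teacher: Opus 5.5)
Your reduction to \cref{lem:random_LS_to_triangle_removal} (via $\vec\cU$, absorbing $1/(1-\alpha-o(1))$ into the $O(\cdot)$) is correct. Your opening sentence for the decreasing case also names the right target: a coupling with $\bfG^*\subseteq\bfR$. The gap is that you never produce this coupling. The pointwise comparison you fall back on cannot work at the $e^{O(n\log^2 n)}$ scale, because $\bfG^*$ almost never has $m$ hyperedges. Each hyperedge of $G^{W,(3)}$ conflicts with exactly $3(n-w-1)$ others, so it survives in $\bfG^*$ with probability about $e^{-3\alpha}$, and $\E[e(\bfG^*)]\approx e^{-3\alpha}m$. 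Adding or removing one hyperedge of $\bfG$ changes $e(\bfG^*)$ by at most $3$, so \cref{freedman_inequality} gives $\Pr[e(\bfG^*)=m]\le e^{-\Omega(n^2)}$ for constant $\alpha$. Since $\bfR$ lives on sets of size exactly $m$, a bound $\Pr[\bfG^*=P]\ge e^{-O(n\log^2 n)}\Pr[\bfR=P]$ cannot hold on any set of $P$ with $\bfR$-mass $e^{-o(n^2)}$. In particular it cannot hold uniformly over triangle-typical $P$, which carry $\bfR$-mass at least $e^{-O(n\log^2 n)}$ by \cref{lem:random_LS_to_triangle_removal} and \cref{lem:epsilon-triangular}. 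Two further problems: $p^m(1-p)^{\cdots}$ is not $\Pr[\bfG^*=P]$, since $\bfG^*=P$ allows $\bfG$ to contain further non-isolated hyperedges. And ``allowing $\bfG^*\subseteq P$'' cannot be done pointwise either, since summing $\Pr[\bfG^*\subseteq P]$ over $P$ overcounts enormously; monotonicity has to enter through a coupling.

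The missing idea is that the greedy coupling you already use in the increasing case settles this direction with only an $O(1)$ loss. Scan a uniformly random ordering of the $n(n-w)^2$ hyperedges of $G^{W,(3)}$, accepting each one that conflicts with no previously accepted hyperedge; this realises $\trp(W,m)$. Let $\bfG$ be the prefix of length $M\sim\mathrm{Bin}(n(n-w)^2,\alpha/(n-w))$, with $M$ independent of the ordering. An isolated hyperedge of $\bfG$ conflicts with nothing scanned before it, so it is always accepted. Hence on $\{M\le m\}$ every hyperedge of $\bfG^*$ is among the first at most $m$ accepted ones, i.e.\ $\bfG^*\subseteq\bfR$ or $\bfR=\perp$, and monotonicity gives $\bfR\in\cU\Rightarrow\bfG^*\in\cU$. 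As $\bfR$ is independent of $M$, and $\E M=m$ gives $\Pr[M\le m]=\Omega(1)$, you get $\Pr[\bfR\in\cU]=\Pr[\bfR\in\cU\mid M\le m]\le\Pr[\bfG^*\in\cU]/\Pr[M\le m]=O(\Pr[\bfG^*\in\cU])$. This is the paper's proof.

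Your increasing case is fine in outline. It is a mild variant of the paper's argument, which instead uses \cref{freedman_inequality} to show $\bfG$ has at least $m$ isolated hyperedges, all of which the scan accepts. You need no typicality of the removal process there. Each accepted hyperedge destroys at most $3(n-w)$ triangles, so each scanned hyperedge is accepted with conditional probability at least $1-3\alpha-o(1)$. With $\gamma\ge 12\alpha(1+\gamma)^2$, binomial domination then shows the scan reaches $m$ acceptances within its first $(1+\gamma/2)m$ hyperedges, and $e(\bfG)\ge(1+\gamma/2)m$; each holds with probability $1-e^{-\Omega(\gamma^2\alpha n^2)}$. Finally, your count ``$(1+\gamma)\alpha N'/(n-w)\cdot(n-w)$'' should read $(1+\gamma)m$.
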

\begin{proof}
By the symmetry of rows in Latin squares, we can assume without loss of generality that $\cV = [|\cV|]$. Let $ \bfR \sim \trp(W, \alpha n(n-w))$. It suffices to show that $\Pr[ \bfR \in \cU] = O(\Pr[\bfG^{*} \in \cU])$ for part~(\cref{LS-to-hyper-decreasing}) and $\Pr[\bfR \in \cU] \leq \Pr [\bfG \in \cU] + \exp(-\Omega(\gamma^2 \alpha n^2))$ for part~(\cref{LS-to-hyper-increasing}), as then the conclusion of the lemma would follow by \cref{lem:random_LS_to_triangle_removal}, where we consider the `ordered' version $\vec \cU$ of property $\cU$.

We first show $\Pr[ \bfR \in \cU] = O(\Pr[\bfG^{*} \in \cU])$ for part~(\cref{LS-to-hyper-decreasing}). We can couple  $\bfR$ and $\bfG^*$ such that, if $e(\bfG) \leq \alpha n(n-w)$, then either $\bfG^* \subseteq \bfR$ or $\bfR = \bot$ (to see this, randomly order the hyperedges of $\bfG$ and run the triangle removal process on that ordering). Since $\cU$ is monotone decreasing, we have
\[
\Pr[\bfR \in \cU] \leq \Pr[\bfG^* \in \cU \mid e(\bfG) \leq \alpha n (n-w)].
\]
Since $e(\bfG)$ is binomial with mean $\alpha n(n-w)$, we have that $\Pr[e(\bfG) \leq \alpha n(n-w)] = \Omega(1)$. Thus,
\[
\Pr[\bfR \in \cU] \leq \frac{\Pr[\bfG^* \in \cU]}{\Pr[e(\bfG) \leq \alpha n(n-w)]} = O(\Pr[\bfG^* \in \cU]).
\]

We now turn to showing $\Pr[\bfR \in \cU] \leq \Pr [\bfG \in \cU] + \exp(-\Omega(\gamma^2 \alpha n^2))$ for part~(\cref{LS-to-hyper-increasing}). Let $p=(1+\gamma)\alpha/(n-w)$ and let $\bfG^*$ be obtained from $\bfG$ by deleting all hyperedges involved in conflicts. Note that we can couple $(\bfG^*, \bfG)$ and $\bfR$ such that $\bfR \subseteq \bfG$ as long as $\bfG^*$ has at least $m$ hyperedges (to see this, randomly order the hyperedges in $\bfG$ and run the triangle removal process on that ordering).

Let $\bfX$ be the number of hyperedges in $\bfG^*$ (i.e., the number of isolated hyperedges in $\bfG$). Note that there are precisely $n(n-w)^2 $ potential hyperedges for $\bfG$ (these are the hyperedges that do not conflict with $W$). Each of these is present and isolated in $\bfG$ with probability at least $p (1-p)^{3n}  \geq (1 + \gamma)\alpha \exp\big(-6(1+\gamma)\alpha / \zeta\big) /(n-w)$. Thus, we have 
\[ \mathbb{E}[ \bfX] \geq n (n-w) \cdot (1+\gamma)\alpha \exp\big(-6(1+\gamma)\alpha/\zeta\big)   \geq (1+\gamma/2)\alpha n(n-w) , \]
where we used that $\frac{\gamma}{(1+\gamma)^2} \geq \frac{12\alpha}{\zeta}$. Adding a hyperedge to $\bfG$ can increase $\bfX$ by at most $1$, and removing a hyperedge from $\bfG$ can increase $\bfX$ by at most $3$, by making three hyperedges isolated. Thus, by \cref{freedman_inequality},
\[ \mathbb{P}[\bfX < \alpha n(n-w)] \leq \mathbb{P}[|\bfX - \mathbb{E}[\bfX]| \geq \gamma \alpha n(n-w)/2 ] \leq \exp(-\Omega(\gamma^2 \alpha n^2)).\]
Therefore,
\[ \mathbb{P}[\bfR \in \cU] \leq \mathbb{P}[\bfG \in \cU] + \mathbb{P}[\bfX < \alpha n (n-w)] = \mathbb{P}[\bfG \in \cU] + \exp(-\Omega(\gamma^2 \alpha n^2)). \qedhere \]
\end{proof}

\section{Intercalates and the effects of switching them}
\label{sec:intercalates_and_effects}

Intercalates constitute a central tool in the proof of our main theorem. We make use of them in two different ways: to flip the parities of the rows, columns, and symbols they participate in, and to change the cycle structure of the permutations induced by pairs of rows, columns, or symbols.

\begin{definition}
In a Latin square $L$, an \emph{intercalate} $I\subseteq L$ is a $2\times 2$ Latin subsquare. To \emph{switch} $I$ means to exchange its two symbols: this results in another Latin square $\mr{sw}_I(L)$. For a set $\cI$ of pairwise disjoint intercalates in $L$, we define $\mr{sw}_\cI (L)$ as the Latin square obtained by switching each intercalate in $\cI$ (note that these operations commute).
\end{definition}
We also use the following notation to describe the cycle structure between two rows. For a row index $r$, we denote by $L[r]$ the $1 \times n$ Latin rectangle containing row $r$ in $L$. Similarly, for a column index $c$, we denote by $L[c]$ the $n \times 1$ Latin rectangle containing column $c$ in $L$, and for a symbol $s$, we have that $L[s]$ is the partial Latin square consisting of all occurrences of symbol $s$ in $L$.\footnote{Here, we slightly abuse notation. Whenever we refer to rows, columns, and symbols, we regard them as indices in $[n]$ with an implicit ``type'' (the ``type'' is always either ``row'', ``column'', or ``symbol''). Hence $L[z]$ for $z\in [n]$ depends on the ``type'' of $z$ and should always be clear from the context.}
\begin{definition}\label{def:permutation-between-rows}
For two $1\times n$ Latin rectangles $D,D'$, let $\sigma(D,D')$ be the permutation that rearranges the entries of the unique row of $D$ into the entries of the unique row of $D'$. That is, if $\sigma_D$ and $\sigma_{D'}$ are the row permutations given by the only row in $D$ and $D'$ respectively, then $\sigma(D,D') = \sigma_{D'} \circ \sigma_D^{-1}$, where $\pi_1 \circ \pi_2$ is given by $(\pi_1 \circ \pi_2)(i) = \pi_1 (\pi_2(i))$. For two rows $x,r\in [n]$ in a Latin square $L$, we write $\sigma_L(x,r):=\sigma(L[x],L[r])$.  We define analogous notions for columns and symbols.
\end{definition}

To understand how intercalate switchings affect the cycle structure of pairs of rows, columns, or symbols, we need the following fact.
\begin{fact}\label{fact:intercalate-switching}
    Let $x,r\in [n]$ be two rows of a Latin square $L\in \mc L_n$. Consider an intercalate $I$ involving $r$ but not $x$, and let $c_1,c_2\in [n]$ be the two columns of $I$.
    \begin{itemize}
        \item If $c_1,c_2$ lie in different cycles\footnote{$\sigma_L(x,r)$ is defined as a permutation on symbols, but we can equivalently think of it as a permutation on columns (using the correspondence between symbols and columns given by row $x$).}
        of $\sigma_L(x,r)$, then switching $I$ causes these two cycles to merge into a single cycle.
        \item If $c_1,c_2$ lie on the same cycle of $\sigma_L(x,r)$, then switching $I$ causes this cycle to split into two cycles (one of these two cycles corresponds to the part of the original cycle after $c_1$ and before and including $c_2$, and the other cycle corresponds to the part of the original cycle after $c_2$ and before and including $c_1$). We then say that the intercalate switch \emph{excises} each of these two cycles.
    \end{itemize}
\end{fact}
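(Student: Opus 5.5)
The plan is to reduce the statement to the elementary fact that multiplying a permutation by a transposition either merges two cycles or splits one. First I will fix the column viewpoint from the footnote. Define a permutation $\tau$ of $[n]$ (on columns) by letting $\tau(c)$ be the unique column $c'$ with $L[x][c'] = L[r][c]$; that is, $\tau$ sends a column $c$ to the column where row $x$ contains the symbol that row $r$ has in column $c$. Writing $\sigma_x,\sigma_r$ for the row permutations (columns to symbols), we have $\tau=\sigma_x^{-1}\circ\sigma_r$. This is conjugate to $\sigma_L(x,r)=\sigma_r\circ\sigma_x^{-1}$ via $\sigma_x$, so it has the same cycle structure. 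Moreover, the conjugating bijection $\sigma_x$ is exactly the column--symbol correspondence given by row $x$, so talking about ``the cycles containing $c_1,c_2$'' is legitimate.

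Next I will record the effect of the switch. Since $I$ does not involve row $x$, row $x$ is unchanged, so $\sigma_x$ stays the same. In row $r$, switching $I$ exchanges the symbols in columns $c_1$ and $c_2$, so the new row permutation is $\sigma_r\circ(c_1\,c_2)$. The new column permutation is therefore $\tau'=\tau\circ(c_1\,c_2)$. Concretely, $\tau'(c_1)=\tau(c_2)$, $\tau'(c_2)=\tau(c_1)$, and $\tau'(c)=\tau(c)$ for all other $c$. The resulting square is a Latin square because $I$ is an intercalate, but only the formula for $\tau'$ matters here.

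The main step is tracing orbits. If $c_1,c_2$ lie on one cycle of $\tau$, write it as
\[
c_1\to a_1\to\dots\to a_k\to c_2\to b_1\to\dots\to b_\ell\to c_1 .
\]
Under $\tau'$ we get two cycles. The first is $c_1\to\tau(c_2)=b_1\to\dots\to b_\ell\to c_1$, which consists of the part of the original cycle after $c_2$, up to and including $c_1$. The second is $c_2\to a_1\to\dots\to a_k\to c_2$, which consists of the part after $c_1$, up to and including $c_2$. This matches the stated description of the excised cycles. If instead $c_1,c_2$ lie on different cycles $c_1\to a_1\to\dots\to a_k\to c_1$ and $c_2\to b_1\to\dots\to b_\ell\to c_2$, then $\tau'$ gives the single cycle
\[
c_1\to b_1\to\dots\to b_\ell\to c_2\to a_1\to\dots\to a_k\to c_1 ,
\]
so the two cycles merge. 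In both cases all other cycles are untouched.

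There is no real obstacle. The only care needed is the bookkeeping of which composition order corresponds to ``after $c_1$'' versus ``after $c_2$''. That order depends on the convention $\sigma(D,D')=\sigma_{D'}\circ\sigma_D^{-1}$, so I will double-check that direction. Transferring back to the symbol permutation $\sigma_L(x,r)$ is immediate by conjugation with $\sigma_x$.
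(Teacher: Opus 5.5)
Your proof is correct. The paper states this as a Fact without proof, and your argument (pass to the column permutation $\tau=\sigma_x^{-1}\circ\sigma_r$, which is exactly the footnote's identification, note that the switch replaces it by $\tau\circ(c_1\,c_2)$ because row $x$ is untouched, then trace orbits) is precisely the standard verification the paper takes for granted. Your orientation check also comes out right: the new cycles are $(c_1\,b_1\cdots b_\ell)$ and $(c_2\,a_1\cdots a_k)$, matching the stated pieces ``after $c_2$ up to and including $c_1$'' and ``after $c_1$ up to and including $c_2$''.
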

Note that the same fact applies to $\sigma(D,L[r])$ for any $1 \times n$ Latin rectangle $D$. Similar facts can be stated about the effect of intercalate switches on the cycle structure of two columns or two symbols, as can be seen by the symmetry between rows, columns, and symbols.

\subsection{Stable intercalates}
For a Latin square $L$, we denote by $N_{\mathrm{row}}(L)$, $N_{\mathrm{col}}(L)$, $N_{\mathrm{sym}}(L)$ the number of odd rows, columns, and symbols in $L$, respectively. The approach in~\cite{kwan2025parities} for understanding the distribution of $(N_{\mathrm{row}}(\bfL)$, $N_{\mathrm{col}}\bf(L)$, $N_{\mathrm{sym}}(\bfL))$ involves controlling the parities of most rows, columns, and symbols by switching a ``canonical'' set of disjoint intercalates which we refer to as \emph{stable} intercalates. In this subsection, we introduce some definitions and results from~\cite{kwan2025parities} related to stable intercalates, which we need for our proof.

The important property of stable intercalates is that if a Latin square $L'$ can be obtained from $L$ by switching a stable intercalate, then $L$ and $L'$ have the same set of stable intercalates (except that of course one of them is switched in $L'$). This motivates the following definition.

\begin{definition}[Stable intercalates]\label{def:stable_intercalates}
Let $P$ be a partial Latin square. 
For an intercalate $A$, we denote by $\bar{A}$ the outcome of switching $A$.
\begin{itemize}
    \item We say that an intercalate in $P$ is \emph{isolated} 
    if it does not share an entry with any other intercalate.
    \item Consider a set of isolated intercalates $\{A_1, \dots, A_t\}$ in $P$. We say that this is a \emph{critical} set of intercalates if it is possible to switch a subset of $A_1, \dots, A_t$ (resulting in intercalates $B_1, \dots, B_t$ where $B_i = A_i$ or $B_i = \bar{A_i}$ for each $i\in [t]$) to create a new intercalate that shares an entry with each of $B_1, \dots, B_t$. (Note that this implies $t \leq 4$.)
    \item We say that an isolated intercalate in $P$ is \emph{stable} if it is not contained in a critical set of intercalates.
\end{itemize}
\end{definition}

As stable intercalates ``behave independently from each other'', sometimes it is useful to record their presence without remembering whether they are switched or not. This is formalized in the next definition.

\begin{definition}\label{def:intercalate-abstraction}
    For an intercalate $A$ with rows $r_1,r_2$, columns $c_1,c_2$, and symbols $s_1,s_2$, define the tuple $\tau(A) = (\{r_1,r_2\}, \{c_1,c_2\}, \{s_1,s_2\})$ (i.e., this records the rows, columns and symbols of the intercalate, without recording which of the two possible ``states'' it is in). For a (partial) Latin square $P$, let $\cS(P)$ be the set of all tuples $\tau(A)$ for all stable intercalates $A$ in $P$ (i.e., this records the intercalate switches we can perform, without recording which state each of those intercalates is in).
\end{definition}

Now, we can state the previously-mentioned property that the set of stable intercalates (in the sense of $\cS(\cdot)$) does not change upon switching some subset of them.

\begin{lemma}{\emph{\cite[Lemma~7.3]{kwan2025parities}}}
\label{lem:canonicity_of_stable_intercalates}
Let $P_1$ be a (partial) Latin square, and suppose that we can switch a single stable intercalate in $P_1$ to obtain a second (partial) Latin square $P_2$. Then $\cS(P_1) = \cS(P_2)$.
\end{lemma}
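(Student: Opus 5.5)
Let $A$ be the stable intercalate of $P_1$ that is switched, so $P_2 = (P_1\setminus A)\cup \bar A$. Since $\tau(A)=\tau(\bar A)$, the switch preserves the tuples of $A$ itself. It therefore suffices to prove $\cS(P_1)\subseteq \cS(P_2)$. The reverse inclusion then follows by symmetry, once we check that $\bar A$ is stable in $P_2$; that check is part of the argument below with $B=A$.

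The first step is to compare the intercalates of $P_1$ and $P_2$. The key observation is that $P_1$ and $P_2$ agree outside the four cells of $A$. So any intercalate of $P_2$ that is not an intercalate of $P_1$ must contain an entry of $\bar A$, and vice versa. Now take any intercalate $C\neq \bar A$ in $P_2$ sharing an entry with $\bar A$. In $P_1$ this is exactly a configuration where switching the subset $\{A\}$ creates a new intercalate sharing an entry with $\bar A$. Hence $\{A\}$ would be a critical set in $P_1$, contradicting the stability of $A$.

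From this we get two facts:
\begin{enumerate}
\item $\bar A$ is isolated in $P_2$.
\item The intercalates of $P_2$ are exactly those of $P_1$ with $A$ replaced by $\bar A$.
\end{enumerate}
For (2), note that any intercalate of $P_1$ other than $A$ is disjoint from $A$, because $A$ is isolated. Such an intercalate survives in $P_2$. Conversely, any intercalate of $P_2$ other than $\bar A$ avoids $\bar A$ by (1), so it already lies in $P_1$.

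The second step is to handle an arbitrary stable intercalate $B$ of $P_1$. By the first step, $B$ (or $\bar A$, if $B=A$) is still an intercalate of $P_2$, and isolation is preserved because the intercalate families correspond bijectively and disjointly. It remains to show $B$ is not in a critical set of $P_2$. Suppose $\{B_1,\dots,B_t\}\ni B$ is critical in $P_2$: switching some subset yields an intercalate $D$ meeting each $B_i$ in an entry. Each $B_i$ is isolated in $P_2$, so each is either $\bar A$ or an isolated intercalate of $P_1$; isolation transfers because the intercalate families agree.

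Translate this back to $P_1$ by replacing $\bar A$ with $A$, and note that switching $A$ versus $\bar A$ yields the same pair of states. The family $\{B_1,\dots,B_t\}$ becomes a family of isolated intercalates of $P_1$. Some subset-switch of it yields a configuration that differs from the one in $P_2$ only in whether $A$ is switched. Adjust the chosen subset by toggling $A$: if $\bar A\in\{B_i\}$, toggle it within the subset; if not, $A$ is unaffected. This reproduces exactly the same partial square on the relevant cells, which contains $D$.

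The main obstacle is this last case: $A$ is not in the family, but $D$ uses cells of $A$. Here the cells of $A$ hold $\bar A$'s symbols in $P_2$ but not in $P_1$. To handle it, I would enlarge the family to $\{B_1,\dots,B_t,A\}$. This is allowed, since $D$ then meets $A$, $A$ is isolated, and the definition only requires the created intercalate to share an entry with every member. The enlarged set is critical in $P_1$ and contains $B$, contradicting the stability of $B$ (or of $A$). One must check that $D$ genuinely meets $A$ in an entry and not just a cell. This holds because $D$ contains an entry of $\bar A$'s state, i.e.\ of the switched $A$, which is what the definition of critical requires.

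Hence $B$ is stable in $P_2$, giving $\cS(P_1)\subseteq\cS(P_2)$. Applying the same argument from $P_2$ (with the stable intercalate $\bar A$) gives the reverse inclusion, so $\cS(P_1)=\cS(P_2)$.
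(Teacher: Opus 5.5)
Your proof is correct. The paper gives no proof of this lemma to compare against: it imports it from Kwan--Petrova--Sawhney.

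Your route is the natural direct verification, in three steps:
\begin{itemize}
\item Non-criticality of $\{A\}$ forces $\bar A$ to be isolated in $P_2$.
\item Hence the intercalates, and the isolated intercalates, of $P_1$ and $P_2$ correspond via $A\leftrightarrow\bar A$.
\item A critical set in $P_2$ pulls back to a critical set in $P_1$. If $\bar A$ belongs to it, you toggle $A$ in the switched subset; if not, and the new intercalate $D$ uses a cell of $A$, you adjoin $A$ to the family.
\end{itemize}

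Two small points should be made explicit.
\begin{itemize}
\item Where you say an intercalate ``is disjoint from $A$'' and therefore survives in $P_2$, you need cell-disjointness. This holds because two entries of a partial Latin square in the same cell coincide.
\item In the enlarged-family case you must also switch $A$. You must also check that $D\neq\bar A$, so that $D$ is genuinely a new intercalate. This is immediate, because $D$ meets $B$ and $B$ is cell-disjoint from $A$. In that case the contradiction is really with the stability of $A$ itself, since $A$ lies in the enlarged critical family.
\end{itemize}
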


The next definition formalizes the intuition that we would like the stable intercalates to be well distributed over the rows, columns, and symbols.

\begin{definition}[Intercalate expanders]
\label{def:intercalate-expander}Let $P$ be an $n \times n$ (partial) Latin square, and consider some $\ell\in [n]$ and $\beta>0$. We say that $P$ is an \emph{$(\ell,\beta)$-intercalate-expander} if the following property holds.
For any sets of rows $\sR, \sR^*$, any sets of columns $\sC, \sC^*$, and any sets of symbols $\sS, \sS^*$, such that five out of these six sets have size at least $\beta n$ and the remaining set has size at least $\ell$, there is a stable intercalate in $P$ with one row in $\sR$, the other row in $\sR^*$, one column in $\sC$, the other column in $\sC^*$, one symbol in $\sS$, and the other symbol in $\sS^*$.
\end{definition}
Note that this definition does not tell us anything about the intercalates that intersect a fixed row (unless $\ell=1$, but we will have $\ell = \log^{11}{n}$). We expect that the number of intercalates intersecting a fixed row should behave nicely, but the currently known techniques for studying random Latin squares are insufficient to show this, since it is a property that is not likely enough to go through permanent estimates (see \cref{lem:random_LS_to_triangle_removal}). Thus, we are instead forced to reason about the number of intercalates intersecting large sets of rows, columns, and symbols, as in the definition above. 

In a uniformly random Latin square, most intercalates heavily intersect with each other, which means there are very few or no stable intercalates. To remedy this, we randomly sparsify our Latin square by intersecting it with a `template' in order to ensure that many of the intercalates are stable.

\begin{definition}\label{def:template}
An $n\times n$ \emph{template} is a subset $T\subseteq [n]^2$, which we interpret as a set of row/column pairs. For a Latin square $L\in \mc L_n$, we write $T \cap L\in \mc P_n$ for the partial Latin square containing just the entries of $L$ in positions specified by $T$.
\end{definition}

The next lemma shows that the intersections of most templates and most Latin squares are good intercalate expanders, which will allow us to control the parities of most rows, columns, and symbols.

\begin{lemma}[Most templates and Latin squares give intercalate expanders]
\label{lemma:existence_split_stable_intercalate} Let $\beta >0$ be a constant. Let $\bfT \subseteq [n]^2$ be a random subset of row/column pairs, where each is included with probability $\varepsilon = o(\log^{-1}{n})$ independently. Let $\bfL \sim \mr{Unif}(\mc L_n)$ be a uniformly random Latin square (interpreted as a subgraph of $K^{(3)}_{n,n,n}$) and let $\ell = \log^{11}n$. Then 
\[ \mathbb{P}[ \bfT\cap \bfL \text{ is an }(\ell,\beta)\text{-intercalate-expander}\,] \geq 1- \exp(-\omega(n \log^2{n})).\]
\end{lemma}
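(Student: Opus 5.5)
This lemma is essentially the intercalate-expander result of \cite{kwan2025parities}, so the plan follows that approach. The goal is to reduce to the binomial random hypergraph via \cref{lem:LS-to-hypergraphs} with $\cV=\emptyset$ (so $w=0$, and $W$ is empty). Fix a template $T$ which is typical in the sense that every row and column contains $(1\pm o(1))\varepsilon n$ pairs of $T$; a Chernoff bound shows this fails with probability far below $\exp(-\omega(n\log^2 n))$. Let $\cT$ be the set of Latin squares $L$ for which $T\cap L$ is \emph{not} an $(\ell,\beta)$-intercalate-expander. We want $\Pr[\bfL\in\cT]\le\exp(-\omega(n\log^2 n))$. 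Since this loss is much larger than the $\exp(O(n\log^2 n))$ factor in the transfer lemma, it suffices to show that a suitable inherited property $\cU$ has probability $\exp(-\omega(n\log^2 n))$ in the hypergraph model.

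The property $\cU$ should be roughly ``there exist sets $\sR,\sR^*,\sC,\sC^*,\sS,\sS^*$ as in \cref{def:intercalate-expander} such that no intercalate of the right kind inside $T\cap P$ is stable.'' This property is monotone decreasing for intercalates that are present but is not monotone overall, because stability can be destroyed by adding edges. I would handle this as in \cite{kwan2025parities}. Let $\cU$ be the property that for some choice of sets there is no intercalate $A\subseteq T\cap P$ of the required type such that both of the following hold:
\begin{itemize}
\item[(i)] $A$ is isolated and not in a critical set within $T\cap P$;
\item[(ii)] the property persists in the random $m$-subset.
\end{itemize}
Stability of an intercalate is witnessed by the absence of a bounded number of nearby configurations. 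Passing from $L$ to a random $\alpha n^2$-subset can only delete entries, so nonexistence of stable intercalates in $T\cap L$ must be related to the subset. I would instead use the decreasing property ``few stable intercalates of the required type in $T\cap P$'' together with the observation that an intercalate of $L$ survives in $\bfP_m(L)$ with probability $\alpha^4$, so conflicting structure can only thin. The cleanest route is to define stability relative to the sample and show inheritance with $\rho=1-o(1)$. Concretely, the sample contains all of $A$ with probability $\alpha^4$, and any blocking intercalates in $L$ remain blocking or vanish. Since non-existence is monotone under the resulting coupling, this gives the needed inheritance.

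In $\bfG^*$ with $p=\alpha/n$, the expected number of intercalates with the prescribed row, column and symbol sets lying in $T$ is $\Theta(\varepsilon^4\beta^5\ell\, n^{5}\cdot n\cdot p^4)=\Theta(\varepsilon^4\ell n^2)$. Most of these are stable, since the probability of a nearby intercalate is $O(\varepsilon)$-small given $\varepsilon=o(1/\log n)$. To get the required tail, I would use disjointness: greedily the maximum number of \emph{edge-disjoint} such configurations is concentrated by \cref{freedman_inequality} or Janson-type bounds, with failure probability $\exp(-\Omega(\varepsilon^4\ell n))$. Because $\ell=\log^{11}n$ and $\varepsilon$ is only slightly below $1/\log n$, this is $\exp(-\omega(n\log^2 n))$ once $\varepsilon\ge\log^{-2}n$ say (and smaller $\varepsilon$ would be treated analogously with $\ell$ compensating). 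The number of choices of the six sets is at most $2^{6n}$, so the union bound costs only $\exp(O(n))$.

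The main obstacle is the lower tail for the number of stable intercalates, which is not a monotone count. I would handle it by counting intercalates minus those blocked by a critical or overlapping configuration. The count of good intercalates has a Janson-type lower tail. The count of bad intercalates has mean only a factor $O(\varepsilon\log^{O(1)})$ smaller, and its upper tail is controlled by deletion-method arguments at the scale $\exp(-\omega(n\log^2 n))$. Making the effective Lipschitz constants polylogarithmic, by first conditioning on maximum-degree bounds in $\bfG$, is where the real work lies.
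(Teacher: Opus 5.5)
\textbf{There is a genuine gap.} The missing idea is that this lemma needs no new probabilistic argument. The paper quotes \cref{lemma:intercalate_expander_random_hypergraph_template} (i.e.\ \cite[Lemma~9.2]{kwan2025parities}), which is the same statement with the template replaced by an independent random hypergraph $\bfH\sim\mathbb G^{(3)}(n,\varepsilon)$. It then observes that for each \emph{fixed} $L$, the random partial Latin squares $\bfH\cap L$ and $\bfT\cap L$ have the same distribution: each entry of $L$ is a single triple and a single row/column pair, kept independently with probability $\varepsilon$ in either model. By \cref{fact:joint_prob_vs_most_outcomes}(\ref{fact:joint_to_separate}), with probability $1-\exp(-\omega(n\log^2 n))$ over $\bfL$ the outcome $L$ satisfies $\Pr[\bfH\cap L\text{ is not an expander}]\le\exp(-\omega(n\log^2 n))$. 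The same bound then holds for $\bfT\cap L$, and \cref{fact:joint_prob_vs_most_outcomes}(\ref{fact:separate_to_joint}) gives the joint bound. You instead fix $T$ and try to re-derive the expander property from scratch through \cref{lem:LS-to-hypergraphs}, and as written this does not go through.

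First, requiring every row and column of $T$ to contain $(1\pm o(1))\varepsilon n$ pairs already fails for a single row with probability at least $(1-\varepsilon)^n=\exp(-O(\varepsilon n))$ (the row could be empty). This is vastly larger than $\exp(-\omega(n\log^2 n))$, so you cannot discard atypical templates at the price you claim.

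Second, your $\cU$ is not a property of partial Latin squares, since item (ii) refers to the sampling. Moreover, the claimed inheritance goes the wrong way: passing to $\bfP_m(L)$ destroys blockers. An intercalate that is non-stable in $T\cap L$ may be stable in $T\cap\bfP_m(L)$: it survives with probability about $\alpha^4$, while a partner sharing one entry with it has three further entries and is lost with probability about $1-\alpha^3$. Neither ``no stable intercalate'' nor ``few stable intercalates'' is monotone, so $\rho=1-o(1)$ has no justification.

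Third, the decomposition you fall back on has the right shape: a lower tail for the intercalate count (a decreasing property) plus an upper tail for the count of blocked intercalates (an increasing one). But the upper tail at scale $\exp(-\omega(n\log^2 n))$ is exactly what you leave open, and for the naive count it is false. Part~(\ref{LS-to-hyper-increasing}) of \cref{lem:LS-to-hypergraphs} bounds by $\Pr[\bfG\in\cU]$, where conflicts are allowed. Plant the $8ab$ hyperedges $(r_i,c,s)$ with $r_1\in\sR$, $r_2\in\sR^*$, $a$ columns from each of $\sC,\sC^*$ whose cells in rows $r_1,r_2$ lie in $T$, and $b$ symbols from each of $\sS,\sS^*$. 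This produces $a^2b^2$ required-type configurations, each sharing a hyperedge with many others. The relevant mean is $\Theta(\varepsilon^4\beta^5\ell n)$; your $\Theta(\varepsilon^4\ell n^2)$ has a spurious factor of $n$. So $ab=O(\varepsilon^2\sqrt{\ell n})$ suffices, at a probability cost of only $\exp(-\tilde O(\sqrt n))$, which the $\exp(O(n\log^2 n))$ transfer loss swamps.

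Finally, by fixing $T$ you also discard the template's own independent randomness, which is precisely what the random-template formulation keeps available.
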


To prove ~\cref{lemma:existence_split_stable_intercalate}, we state a slightly different version of it, shown in~\cite{kwan2025parities}, in which the template is a random hypergraph.

\begin{lemma}{\emph{\cite[Lemma~9.2]{kwan2025parities}}}
\label{lemma:intercalate_expander_random_hypergraph_template}
Fix a constant $\beta >0$ and let $\ell=\log^{11}n$. Consider a random hypergraph $\bfH \sim \mb G^{(3)}(n,\varepsilon)$ with $\varepsilon = o(\log^{-1}n)$ and an independent random Latin square $\bfL \sim \on{Unif}(\cL_n)$ (both interpreted as subgraphs of $K^{(3)}_{n,n,n}$). Then
\[ \mathbb{P}[ \bfH\cap \bfL \text{ is an }(\ell,\beta)\text{-intercalate-expander}\,] \geq 1- \exp(-\omega(n \log^2{n})).\]
\end{lemma}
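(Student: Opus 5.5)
The plan is to condition on the template $\bfH$, transfer the failure event for $\bfL$ to a binomial random hypergraph via \cref{lem:LS-to-hypergraphs} with $\cV=\emptyset$ (so $W$ is empty and $w=0$), and then union bound over all choices of the six sets $\sR,\sR^*,\sC,\sC^*,\sS,\sS^*$. There are at most $2^{6n}=\exp(O(n))$ such choices. Fix $\bfH=H$ typical (degrees and codegrees concentrated, which fails with probability far below $\exp(-\omega(n\log^2 n))$) and fix one choice of boxes. Write $\mathcal B$ for the set of intercalates respecting the boxes. If $H\cap\bfL$ has no stable intercalate in $\mathcal B$, then one of two things happens:
\begin{itemize}
\item[(a)] $H\cap\bfL$ has at most $K$ intercalates in $\mathcal B$, where $K$ is a suitable threshold below the expected count;
\item[(b)] many of these intercalates fail to be stable, so $H\cap\bfL$ contains at least $K/8$ ``bad configurations''. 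A bad configuration is an intercalate in $\mathcal B$ together with a second intercalate sharing an entry, or together with a critical set witnessing instability. These are fixed bounded-size hypergraphs.
\end{itemize}
The two cases are handled with the two parts of \cref{lem:LS-to-hypergraphs}, with $\alpha$ a small constant.

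For (a), the property $\cU_a=\{P:\ H\cap P \text{ has at most } K \text{ intercalates in } \mathcal B\}$ is monotone decreasing. It is $(1,m)$-inherited from the corresponding property of $\bfL$, since every intercalate of a subset $P\subseteq L$ is an intercalate of $L$. By part~(\cref{LS-to-hyper-decreasing}) it suffices to bound $\Pr[\bfG^*\in\cU_a]$, where $\bfG\sim\mb G^{(3)}(n,\alpha/n)$. The expected number of intercalates of $H\cap\bfG^*$ in the boxes is of order $\ell\beta^5 n^5\cdot(\alpha\varepsilon/n)^4=\Theta(\ell n\varepsilon^4)$; the deletion of conflicts costs only a constant factor. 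A lower-tail bound (Janson's inequality, or \cref{freedman_inequality} applied to a suitably capped count) gives failure probability $\exp(-\Omega(\ell n\varepsilon^4))$. With $\ell=\log^{11}n$ this is $\exp(-\omega(n\log^2 n))$ provided $\varepsilon\ge\log^{-2}n$, say. If $\varepsilon$ is smaller than this, I would first pass to a subtemplate or adjust $\ell$; this needs checking against the intended range of $\varepsilon$. Taking $K$ to be half this expectation finishes (a).

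For (b), let $\cU_b$ be the property that $H\cap P$ contains at least $c_\alpha K$ bad configurations, with $c_\alpha=\alpha^{O(1)}$. This property is monotone increasing. It is $(\rho,m)$-inherited with $\rho=\Omega(1)$: each bad configuration of $L$ survives in $\bfP_m(L)$ with probability $\alpha^{O(1)}$, and a second-moment (Paley--Zygmund) argument on the surviving count works because the configurations in a Latin square have bounded overlap structure. Part~(\cref{LS-to-hyper-increasing}) with $\gamma$ a small constant reduces the problem to bounding $\Pr[\bfG\in\cU_b]$ for $\bfG\sim\mb G^{(3)}(n,(1+\gamma)\alpha/n)$. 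The expected number of bad configurations in $H\cap\bfG$ is smaller than the expected number of intercalates by a factor $O(\varepsilon+\alpha)$, because attaching an extra overlapping intercalate costs at least one more hyperedge-in-template factor. Hence $c_\alpha K$ exceeds the mean by a large factor, once $\alpha$ is small compared with $c_\alpha$ (which I would arrange by the choice of constants).

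The main obstacle is the upper-tail estimate in (b): proving $\Pr[\#\text{bad configurations}\ge c_\alpha K]\le\exp(-\omega(n\log^2 n))$. Upper tails of subgraph counts are delicate because a few high-degree hyperedges can create many configurations. I would first delete, or truncate the count at, any vertex pair lying in more than $\log^{O(1)} n$ intercalates of $H\cap\bfG$; this truncation event itself has super-exponentially small probability by a direct moment bound. After truncation, each hyperedge affects the count by at most $\log^{O(1)}n$, so \cref{freedman_inequality} with these $K_i$ and $p_i=(1+\gamma)\alpha/n$ gives a bound of the form $\exp(-\Omega(\ell n\varepsilon^4/\log^{O(1)}n))$. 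Since $\ell=\log^{11}n$ gives ample room, this is still $\exp(-\omega(n\log^2 n))$. Combining (a) and (b), the $\exp(O(n\log^2 n))$ transfer losses, the union bound over boxes, and the average over $\bfH$ yields the lemma.
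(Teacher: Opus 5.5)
\textbf{There is a genuine gap in step (b).} The paper gives no proof of this lemma (it is quoted from \cite[Lemma~9.2]{kwan2025parities}), so I can only judge your argument on its own terms.

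Step (a) is essentially fine: it is the same lower-tail-plus-transfer mechanism as in the proof of \cref{lem:many-intercalates}. The caveat you raised is real. The statement implicitly needs $\varepsilon$ bounded below, roughly $\ell\varepsilon^4=\omega(\log^2 n)$; otherwise the event that no intercalate of $\bfL$ in $\mathcal B$ survives in $\bfH$ is already too likely. ``Passing to a subtemplate'' cannot fix this, since a subtemplate is sparser and $\ell$ is fixed by the statement.

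The gap is that the bound $\Pr[\bfG\in\cU_b]\le\exp(-\omega(n\log^2 n))$ is false, because of planted dense clusters. Part~(\ref{LS-to-hyper-increasing}) of \cref{lem:LS-to-hypergraphs} concerns $\bfG$ itself, conflicts included. Given a typical $H$, take the rows of $\sR$ together with $k$ rows of $\sR^*$, and $k$ columns and $k$ symbols from the relevant sets. Ask that $\bfG$ contain all $\approx\varepsilon k^3$ hyperedges of $H$ in this box.
\begin{itemize}
\item This has probability $\exp(-O(\varepsilon k^3\log n))$.
\item It creates $\Theta(\varepsilon^4\ell k^5)$ intercalates in $\mathcal B$, each sharing an entry with $\Theta(\varepsilon^3k^3)$ others, i.e.\ $\Theta(\varepsilon^7\ell k^8)$ bad configurations.
\item Already for $k\approx(n/\varepsilon^3)^{1/8}$ this exceeds $c_\alpha K=\Theta(\ell n\varepsilon^4)$.
\end{itemize}
So $\Pr[\bfG\in\cU_b]\ge\exp(-n^{3/8+o(1)})$ for polylogarithmic $\varepsilon$, and the $\exp(O(n\log^2 n))$ transfer loss leaves nothing. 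Even among conflict-free structures, planting the $H$-part of a Latin subsquare of order about $n^{1/3}/\varepsilon$ costs only $\exp(-n^{2/3+o(1)})$.

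Your proposed rescue does not work either.
\begin{itemize}
\item A fixed entry lies in $t$ intercalates of $H\cap\bfG$ with probability roughly $(\alpha\varepsilon)^{3t}/t!$. So the event that some pair lies in more than $\log^{C}n$ intercalates has probability at least $\exp(-\log^{O(1)}n)$. That is nowhere near super-exponentially small, so the truncation event cannot be discarded.
\item Counting unstable intercalates in $\mathcal B$ instead of bad configurations does not help. Planting the $H$-part of a Latin subsquare of order $\Theta(\sqrt n)$ through the rows of $\sR$ makes $\Theta(\ell n\varepsilon^4)$ intercalates in $\mathcal B$ unstable, at cost only $\exp(-O(\varepsilon n\log n))$.
\end{itemize}

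The underlying problem is that the decomposition ``few intercalates or many bad configurations'' replaces the genuinely rare, non-monotone failure event by a far likelier upper-tail event. What makes ``no stable intercalate in $\mathcal B$'' rare is that all the natural, well-spread intercalates must be destroyed, and no global count detects this.

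What is missing is a way to handle instability without an upper-tail estimate at scale $\ell n\varepsilon^4$ in the binomial model. One possible route: condition on $\bfL$ and use the independent randomness of $\bfH$ (Harris's inequality plus concentration over $\bfH$). The aim would be to show that many members of a large, well-spread family of intercalates of $\bfL$ in $\mathcal B$ survive and are stable. Then transfer to $\bfL$ only properties whose violation would require planting $\omega(n\log n)$ hyperedges.
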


We now show the (simple) deduction of \cref{lemma:existence_split_stable_intercalate} from \cref{lemma:intercalate_expander_random_hypergraph_template}, which is almost identical to the proof of \cite[Lemma 8.3]{kwan2025parities} but we include the details for completeness. We first record the following simple fact.
\begin{fact}\label{fact:joint_prob_vs_most_outcomes}
Suppose $\bfH_1 \in \cH_1$ and $\bfH_2 \in \cH_2$ are independent random objects, let $\cP^{(2)} \subseteq \cH_1 \times \cH_2$, and for any $p\in [0,1]$ let $\cP(p)$ be the set of all $H_1\in \mc H_1$ such that $\Pr[(H_1,\mbf H_2)\in \mc P^{(2)}]> p$. Then
\begin{enumerate}
    \item \label{fact:joint_to_separate} if $\mathbb{P}[(\bfH_1, \bfH_2) \in \cP^{(2)}] \leq p$, then $\Pr[\mbf H_1\in \mc P(\sqrt p)]\le \sqrt p$.
    \item \label{fact:separate_to_joint} If $\Pr[\mbf H_1\in \mc P(p_2)]\le p_1$, then $\mathbb{P}[(\bfH_1, \bfH_2) \in \cP^{(2)}] \leq p_1+p_2$.
\end{enumerate}
\end{fact}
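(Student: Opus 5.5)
Both parts follow from Markov's inequality applied to the random variable $\mathbf{Y}:=\Pr[(\bfH_1,\bfH_2)\in\cP^{(2)}\mid \bfH_1]$, together with a split of the probability space according to whether $\bfH_1$ lies in $\cP(\cdot)$. Independence of $\bfH_1$ and $\bfH_2$ gives, for each fixed outcome $H_1$,
\[
\Pr[(\bfH_1,\bfH_2)\in\cP^{(2)}\mid \bfH_1=H_1]=\Pr[(H_1,\bfH_2)\in\cP^{(2)}].
\]
Hence the event $\{\bfH_1\in\cP(q)\}$ is exactly the event $\{\mathbf{Y}>q\}$. By the tower property, $\E[\mathbf{Y}]=\Pr[(\bfH_1,\bfH_2)\in\cP^{(2)}]$.

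For part~(\cref{fact:joint_to_separate}), I would apply Markov's inequality to the nonnegative variable $\mathbf{Y}$ at threshold $\sqrt p$:
\[
\Pr[\bfH_1\in\cP(\sqrt p)]=\Pr[\mathbf{Y}>\sqrt p]\le \frac{\E[\mathbf{Y}]}{\sqrt p}\le \frac{p}{\sqrt p}=\sqrt p .
\]
The case $p=0$ is trivial, since then $\mathbf{Y}=0$ almost surely. If one wants to avoid measure-theoretic conditioning, the same argument works by summing or integrating over outcomes of $\bfH_1$ and using Fubini.

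For part~(\cref{fact:separate_to_joint}), I would split the joint event according to whether $\bfH_1\in\cP(p_2)$:
\[
\Pr[(\bfH_1,\bfH_2)\in\cP^{(2)}]\le \Pr[\bfH_1\in\cP(p_2)]+\E\bigl[\mathbf{Y}\,\mathbbm{1}_{\bfH_1\notin\cP(p_2)}\bigr]\le p_1+p_2 .
\]
The second term is at most $p_2$ because $\mathbf{Y}\le p_2$ whenever $\bfH_1\notin\cP(p_2)$.

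There is no real obstacle. The only point needing care is that "independence" is what makes $\mathbf{Y}$ evaluated at $H_1$ equal to the unconditional quantity $\Pr[(H_1,\bfH_2)\in\cP^{(2)}]$ that appears in the definition of $\cP(\cdot)$.
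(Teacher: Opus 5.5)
Your argument is correct and complete. Markov's inequality applied to $\mathbf{Y}=\Pr[(\bfH_1,\bfH_2)\in\cP^{(2)}\mid\bfH_1]$ gives part (1), and splitting on whether $\bfH_1\in\cP(p_2)$ gives part (2). The paper records this as a ``simple fact'' without writing out a proof. Your averaging argument is exactly the standard reasoning it relies on, and since all objects there are finite, no measure-theoretic issues arise.
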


\begin{proof}[Proof of \cref{lemma:existence_split_stable_intercalate}]
Let $\bfH \sim \mathbb{G}^{(3)}(n,\varepsilon)$ with $\varepsilon = o(\log^{-1}{n})$. Say that $L\in \cL_n$ is \emph{good} if \[\Pr[\bfH \cap L\text{ is an }(\ell,\beta)\text{-intercalate-expander}]\ge 1 - \exp(-\omega(n\log^2{n})).\]
By \cref{lemma:intercalate_expander_random_hypergraph_template} and \cref{fact:joint_prob_vs_most_outcomes}(\ref{fact:joint_to_separate}), $\mbf L$ is good with probability $1-\exp(-\omega(n\log^2{n} ))$.

Note that for each fixed outcome $L$ of $\bfL$, the two random partial Latin squares $\bfH \cap L$ and $\bfT \cap L$ have exactly the same distribution. Therefore, for each good $L$,
\[\Pr[\bfT \cap L\text{ is an }(\ell,\beta)\text{-intercalate-expander}]\ge 1 - \exp(-\omega(n\log^2{n})),\]
and \cref{fact:joint_prob_vs_most_outcomes}(\ref{fact:separate_to_joint}) yields
\[\Pr[\bfT \cap \bfL\text{ is an }(\ell,\beta)\text{-intercalate-expander}]\ge 1 - \exp(-\omega(n\log^2{n})). \qedhere \]
\end{proof}

\subsection{Robust parity generators}

In this subsection, we show that stable intercalates can be used to control the parities of most rows, columns, and symbols. We start with a linear-algebraic definition that reflects how intercalates act on the parities of the rows, columns, and symbols of a Latin square.
\begin{definition}
Consider an $n\times n$ (partial) Latin square $L$. For a set of rows $\cR$
and a set of intercalates $\mathcal{I}$ in $L$, let $M_{\cR,\mathcal{I}}\in\{0,1\}^{|\cR|\times|\cI|}$
be the ``incidence matrix'' defined as follows:
\[
M_{\cR,\mathcal{I}}(r,I)=\begin{cases}
1 & \text{if row }r\text{ participates in intercalate }I\\
0 & \text{otherwise}.
\end{cases}
\]
We can similarly define $M_{\cC,\mc I}$ or $M_{\cS,\mc I}$ for a set of columns $\cC$ or a set
of symbols $\cS$, and
\[M(\cR,\cC,\cS,\cI) = 
\begin{pmatrix}M_{\cR,\mathcal{I}}\\M_{\cC,\mathcal{I}}\\M_{\cS,\mathcal{I}}\end{pmatrix}\in\{0,1\}^{(|\cR|+|\cC|+|\cS|)\times|\mathcal{I}|}
\]
\end{definition}

\begin{remark}
\label{remark:matrix_interpretation}
For a set $\cR$ of rows, a set $\cC$ of columns, a set $\cS$ of symbols, and a set $\cI$ of intercalates in a Latin square $L$, we can interpret $M = M(\cR, \cC, \cS,\cI)$ in the following way. If we switch a subset $\cI'$ of the intercalates in $\cI$ described by $\vec i \in \mathbb{F}_2^{|\cI|}$ (where $\vec i_I = 1$ if and only if $I \in \cI'$), then $M \vec i \in \mathbb{F}_2^{|\cR| + |\cC| + |\cS|}$ describes the changes in the parities of the rows, columns, and symbols in $\cR,\cC,\cS$. That is, if $\vec \xi \in \mathbb{F}_2^{|\cR| + |\cC| + |\cS|}$ denotes the parities of the rows, columns, and symbols $\cR,\cC,\cS$ in $L$, then $\vec \xi + M \vec i$ describes their parities in $\mr{sw}_{\cI'}(L)$. Thus, the image of $M$ corresponds to all the different row, column, and symbol parity sequences that are achievable by switching some intercalates in $\cI$. The $2^3$ vectors of the form $(\vec{x}_{\cR}\; \vec{x}_{\cC}\; \vec{x}_{\cS})\in \mb F_2^{|\cR|+|\cC|+|\cS|}$ where $\vec{x}_Z\in \mb F_2^{|Z|}$ is the all-zero vector $(0,\dots,0)$ or the all-one vector $(1,\dots,1)$ for each $Z \in \{\cR,\cC,\cS\}$ are always left kernel vectors of $M$, since each intercalate intersects precisely two rows, two columns, and two symbols. This reflects the fact that intercalate switches cannot alter the parity of the number of odd rows, the number of odd columns, or the number of odd symbols and implies that $M$ cannot have rank more than $|\cR|+|\cC|+|\cS|-3$ (since $\textrm{im}(M) = \textrm{ker}(M^T)^\perp$).
\end{remark}

\begin{definition}
For a partial Latin square $P$ and sets $\cR$ of rows, $\cC$ of columns, and $\cS$ of symbols, we denote by $P[\cR,\cC,\cS] \subseteq P$ the partial Latin subsquare of $P$ consisting only of entries with rows in $\cR$, columns in $\cC$, and symbols in $\cS$.
\end{definition}

A parity generator, defined below, is a set of stable intercalates that can be used to control the parities of some rows, columns, and symbols in a Latin square.

\begin{definition}[Parity generators]
For a partial Latin square $P$, rows $\cR$, columns $\cC$, and symbols $\cS$, a set $\cI$ of stable intercalates in $P[\cR ,\cC,\cS]$ is a \emph{parity generator} (with respect to $P, \cR, \cC, \cS$) if the following holds. The matrix $M(\cR,\cC,\cS,\cI)
$ has rank exactly $|\cR|+|\cC|+|\cS|-3$ over $\mb F_2$, and the left kernel vectors of this matrix are precisely the $2^3$ vectors of the form $(\vec{x}_{\cR}\; \vec{x}_{\cC}\; \vec{x}_{\cS})\in \mb F_2^{|\cR|+|\cC|+|\cS|}$ where $\vec{x}_Z\in \mb F_2^{|Z|}$ is the all-zero vector $(0,\dots,0)$ or the all-one vector $(1,\dots,1)$ for each $Z \in \{\cR,\cC,\cS\}$. For a parity generator $\cI$ for $P,\cR,\cC,\cS$, we call the rows $\cR^{\textnormal{s}} = [n] \setminus \cR$, the columns $\cC^{\textnormal{s}} = [n]\setminus \cC$, and the symbols $\cS^{\textnormal{s}} = [n] \setminus \cS$ \emph{stubborn}.
\end{definition}

Let $\vec \xi_{\cR}( L)\in \mathbb{F}^{|\cR|}_2,$ $\vec \xi_{\cC}( L)\in \mathbb{F}^{|\cC|}_2$ and $\vec \xi_{\cS}( L)\in \mathbb{F}^{|\cS|}_2$ respectively denote the sequence of parities of the rows $\cR$, columns $\cC$, and symbols $\cS$ in a Latin square $L$. Observe that if $\cI$ is a parity generator for $P,\cR,\cC$, and $\cS$, then the image of $M=M (\cR,\cC,\cS,\cI)$ consists of precisely the vectors $(\vec x_\cR \; \vec x_\cC \; \vec x_\cS) \in \mathbb{F}_2^{|\cR|+|\cC|+|\cS|}$ with $|\vec x_\cR|_1, |\vec x_\cC|_1$, and $|\vec x_\cS|_1$ even (that is, the vectors orthogonal to the left kernel space of $M$). By \cref{remark:matrix_interpretation}, this implies that given any Latin square $L\supseteq P$, we can achieve any desired parities $\vec{r}\in \mb F_2^{|\cR|}$ for the rows $\cR$, $\vec{c}\in \mb F_2^{|\cC|}$ for the columns $\cC$, and $\vec{s}\in \mb F_2^{|\cS|}$ for the symbols $\cS$, by switching some subset of the intercalates in $\mathcal{I}$, assuming that
\[
|\vec{r}|_1=|\vec \xi_{\cR}( L)|_1\!\!\pmod 2,\quad \quad
|\vec{c}|_1=|\vec \xi_{\cC}( L)|_1\!\!\pmod 2,\quad \quad 
|\vec{s}|_1=|\vec \xi_{\cS}( L)|_1\!\!\pmod 2.
\]

We also need a notion of robustness for parity generators, since our proof requires us to forbid using some entries in the stable intercalate switches.
\begin{definition}[Robust parity generators]
For a partial Latin square $P$, rows $\cR$, columns $\cC$, and symbols $\cS$, a set $\cI$ of stable intercalates in $P[\cR ,\cC,\cS]$ is a $k$-\emph{robust parity generator} (with respect to $P, \cR, \cC, \cS$) if for every choice $E$ of at most $k$ entries, the set of intercalates $\cI_E\subseteq \cI$ that do not intersect $E$ is a parity generator with respect to $P, \cR,\cC,\cS$. For a $k$-robust parity generator $\cI$ for $P,\cR,\cC,\cS$, we call the rows $\cR^{\textnormal{s}} = [n] \setminus \cR$, the columns $\cC^{\textnormal{s}} = [n]\setminus \cC$, and the symbols $\cS^{\textnormal{s}} = [n] \setminus \cS$ \emph{stubborn}.
\end{definition}

The following lemma states that a random Latin square intersected with a random template contains a robust parity generator for almost all rows, columns, and symbols. It is a variation of {\cite[Lemma~8.5]{kwan2025parities}} with an added robustness condition that allows us to forbid a set of entries.

\begin{lemma}[Few stubborn rows, columns, symbols: robust version]\label{lem:few_stubborn_rows_cols_symbols}
Let $\bfT \subseteq [n]^2$ be a random subset of row/column pairs, where each is included with probability $\varepsilon = o(\log^{-1}{n})$ independently. Let $\bf{L} \sim \unif(\cL_n)$. With probability at least $1 - \exp(-\omega(n\log^2{n}))$, the following holds.\ There are sets $\cR^{\textnormal{s}}, \cC^{\textnormal{s}},\cS^{\textnormal{s}} \subset [n]$ of stubborn rows, columns, and symbols respectively, of size at most $\ell = \log^{11}{n}$ each, such that the set $\cI$ of $\cR^{\textnormal{s}} \cup \cC^{\textnormal{s}} \cup\cS^{\textnormal{s}}$-avoiding\footnote{Recall that we regard rows, columns and symbols as indices in $[n]$ with an implicit ``type'' (either ``row'', ``column'' or ``symbol''). With this slight abuse of notation, the set $\cR^{\textnormal{s}}\cup \cC^{\textnormal{s}}\cup \cS^{\textnormal{s}}$ is a disjoint union of typed indices; we use this convention throughout the rest of the paper.} stable intercalates in $\bfT \cap \bfL$ is a $n/(100\log^{11}{n})$-robust parity generator with respect to $\bfT\cap \bfL, \cR^{\textnormal{f}}, \cC^{\textnormal{f}},\cS^{\textnormal{f}}$, where $\cR^{\textnormal{f}}:=[n]\setminus \cR^{\textnormal{s}}$,\,\, $\cC^{\textnormal{f}}:=[n]\setminus \cC^{\textnormal{s}}$,\,\, $\cS^{\textnormal{f}}:=[n]\setminus \cS^{\textnormal{s}}$.
\end{lemma}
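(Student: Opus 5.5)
The plan is to derive this from \cref{lemma:existence_split_stable_intercalate}, in the same way as \cite[Lemma~8.5]{kwan2025parities}, with robustness coming from the slack in the expander definition. By \cref{lemma:existence_split_stable_intercalate}, with probability $1-\exp(-\omega(n\log^2 n))$ the partial Latin square $P:=\bfT\cap\bfL$ is an $(\ell,\beta)$-intercalate-expander for a small constant $\beta$. From here the argument is deterministic: I will show that every $(\ell,\beta)$-intercalate-expander $P$ admits sets $\cR^{\textnormal{s}},\cC^{\textnormal{s}},\cS^{\textnormal{s}}$ of size at most $\ell$ with the stated robust generating property.

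\textbf{Defining the stubborn sets.} I would construct them greedily, as in \cite{kwan2025parities}. Work with the auxiliary ``intercalate graph'' on the typed vertex set (rows, columns, symbols). Each stable intercalate whose rows, columns and symbols all avoid the current stubborn sets gives parity-change vectors. Generation fails exactly when there is a nontrivial left kernel vector $\vec x$ of $M$, i.e.\ a nonconstant cut within one type that no intercalate crosses oddly. Iteratively remove any set $Z$ of at most $\ell$ indices of one type that is ``badly connected'', and put it into the stubborn sets.

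The expander property, applied with five sets of size $\beta n$ and one of size $\ell$, rules out any bad cut in which both sides have size at least $\ell$. Concretely, if the rows split as $\sR\cup\sR'$ with both parts large, there is a stable intercalate with one row on each side. Such an intercalate crosses the cut oddly, contradicting $\vec x$ being in the left kernel. One has to handle the columns/symbols simultaneously, since a kernel vector mixes types. Here I would use the fact that an intercalate meeting the cut oddly in rows must be compensated in columns or symbols, and choose the large sets to force an odd total. So each bad cut has a small side of size less than $\ell$, and I would collect such small sides. To keep the total at most $\ell$ per type, I would argue that the union of the small bad sides is itself small, applying the expander property to that union if it became large.

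\textbf{Robustness.} Fix a set $E$ of at most $k=n/(100\log^{11}n)$ entries. Deleting the intercalates through $E$ destroys at most $k$ intercalates (they are disjoint), since stable intercalates are isolated. For large cuts this is harmless: the expander property yields many crossing intercalates. Take, say, row parts of size at least $\beta n$ and $\ell$. A disjointness/greedy argument then gives more than $k$ crossing intercalates, by repeatedly removing the rows, columns and symbols used and reapplying the expander property, about $\beta n/2$ times for large parts. So some crossing intercalate survives.

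\textbf{Main obstacle.} The delicate case is small cuts, where one side is a non-stubborn set $Z$ of size less than $\ell$. We must ensure that $Z$ still has more than $k$ disjoint crossing intercalates. I would arrange this in the definition of stubbornness: call an index stubborn unless the iteration certifies that it (or any small set containing it) has at least $2k$ crossing stable intercalates into the rest. Then I would use the expander property, with the $\ell$-sized set being any small set of stubborn candidates, to bound the number of stubborn indices by $\ell$. Balancing these two requirements is the crux, and the chosen value $k=n/(100\ell)$ is what makes it work: a set of $\ell$ indices has about $n$ disjoint expander-supplied intercalates, i.e.\ more than $100k$ of them.
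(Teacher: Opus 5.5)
Your architecture matches the paper's. You obtain an $(\ell,\beta)$-intercalate-expander from \cref{lemma:existence_split_stable_intercalate}, call an index non-stubborn if it lies in many stable intercalates, and bound the stubborn sets by applying the expander to $\ell$ stubborn candidates and pigeonholing. You then kill every nontrivial left kernel vector by separating two cases: some type is split into two sides of size $\ge\ell$, or in every type $\cR'$ (resp.\ $\cC',\cS'$) or its complement has size $<\ell$.

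However, the proposal never settles on a definition of the stubborn sets, and the first one you float does not work. Collecting small sides of bad cuts and applying the expander to their union does not directly give a contradiction. Each bad cut carries its own column and symbol parts (and its own $E$), so no single choice of $\sC,\sC^*,\sS,\sS^*$ is compatible with all of them. Moreover, deleting stubborn indices deletes intercalates and can create new bad cuts.

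The definition that works is non-iterative and per index. Put $r\in\cR^{\textnormal{f}}$ iff $r$ lies in at least $n/(10\ell)$ \emph{externally disjoint} stable intercalates of $\bfT\cap\bfL$, counted among \emph{all} stable intercalates. Here externally disjoint means distinct second rows and all columns and symbols distinct. The greedy expander argument plus pigeonhole shows fewer than $\ell$ rows fail this. Afterwards each stubborn index and each entry of $E$ destroys at most one of these intercalates, leaving at least $n/(20\ell)$ for every non-stubborn index.

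This is exactly what the small-cut case needs, and there your proposal only accounts for $E$ (``more than $k$ disjoint crossing intercalates''). When every type is small or co-small, the small sides may be nonempty in all three types at once. You then need an intercalate meeting $v$'s type once and each other type in $0$ or $2$ elements of $\cR'\cup\cC'\cup\cS'$. Pick $v$ in a nonempty small side. Since its surviving intercalates are externally disjoint, fewer than $3\ell$ of them meet the three small sides away from $v$, and any other one has the required odd intersection.

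External disjointness is genuinely needed here. Intercalates through a fixed row that are merely entry-disjoint (all that stability gives) automatically have distinct columns and symbols. But they may all share one second row, which could be stubborn or lie in $v$'s own small side.

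Finally, in the large-cut case the robust step is simplest as in the paper. Let $\cC_E$ be the at most $k$ columns containing entries of $E$. Take $\sC=\sC^*$ to be the larger of $\cC'\setminus\cC_E$ and $\cC^{\textnormal{f}}\setminus(\cC'\cup\cC_E)$, and $\sS=\sS^*$ the larger of $\cS'$ and $\cS^{\textnormal{f}}\setminus\cS'$. This is the precise form of your ``choose the large sets to force an odd total''. One expander application then yields an intercalate avoiding $E$ with even column and symbol intersections. Your greedy extraction of $k+1$ intercalates, by contrast, cannot delete used rows from a side of size close to $\ell$.
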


\begin{proof}
Let $\beta = 1/10$ and let $L$ and $T$ be outcomes of $\bfL$ and $\bfT$ respectively such that the conclusion of~\cref{lemma:existence_split_stable_intercalate} holds, which happens with probability $1-\exp(-\omega(n\log^2{n}))$. Thus, from now on we assume that $T \cap L$ is an $(\ell,\beta)$-intercalate-expander. Let $\cI^{\textnormal{stable}}$ be the set of stable intercalates in $T \cap L$, and note that they are disjoint. After we choose $\cR^{\textnormal{s}}, \cC^{\textnormal{s}}, \cS^{\textnormal{s}}$, we will take $\cI$ to be the intercalates in $\cI^{\textnormal{stable}}$ that do not intersect $\cR^{\textnormal{s}},\cC^{\textnormal{s}},\cS^{\textnormal{s}}$.

Recall that for a set of entries $E$, we denote by $\cI_E \subseteq \cI$ the intercalates avoiding $E$. For every choice of $\cR^{\textnormal{f}},\cC^{\textnormal{f}},\cS^{\textnormal{f}}$ and every choice of $E$ of size at most $n/(100\log^{11}n)$, we have that the vectors of the form $(\vec{x}_{\cR^{\textnormal{f}}}\; \vec{x}_{\cC^{\textnormal{f}}}\; \vec{x}_{\cS^{\textnormal{f}}})\in \mb F_2^{|\cR^{\textnormal{f}}|+|\cC^{\textnormal{f}}|+|\cS^{\textnormal{f}}|}$, where $\vec{x}_Z\in \mb F_2^{|Z|}$ is either $(0,\dots,0)$ or $(1,\dots,1)$ for each $Z \in \{\cR^{\textnormal{f}},\cC^{\textnormal{f}},\cS^{\textnormal{f}}\}$, are left kernel vectors of $M(\cR^{\textnormal{f}},\cC^{\textnormal{f}},\cS^{\textnormal{f}},\cI_E)$. That is because each column of $M_{\cR^{\textnormal{f}},\cI_E}$, $M_{\cC^{\textnormal{f}},\cI_E}$, and $M_{\cS^{\textnormal{f}},\cI_E}$ has precisely two non-zero entries. Thus, $M(\cR^{\textnormal{f}},\cC^{\textnormal{f}},\cS^{\textnormal{f}},\cI_E)$ has rank at most $|\cR^{\textnormal{f}}| + |\cC^{\textnormal{f}}|  + |\cS^{\textnormal{f}}| - 3$. It remains to show we can choose $\cR^{\textnormal{f}}, \cC^{\textnormal{f}}, \cS^{\textnormal{f}}$ such that for any choice $E$ of forbidden entries, $M(\cR^{\textnormal{f}},\cC^{\textnormal{f}},\cS^{\textnormal{f}},\cI_E)$ has no other left kernel vectors. 

First, we translate the desired statement into more combinatorial language. Consider the auxiliary 6-uniform hypergraph $\mc Q(\cR^{\textnormal{f}},\cC^{\textnormal{f}},\cS^{\textnormal{f}}, E)$ on the vertex set $\cR^{\textnormal{f}}\cup \cC^{\textnormal{f}}\cup \cS^{\textnormal{f}}$, where for every intercalate in $\mc \cI_E$ we put a hyperedge consisting of the two rows, two columns, and two symbols of that intercalate. We want to choose $\cR^{\textnormal{f}},\cC^{\textnormal{f}},\cS^{\textnormal{f}}$ to have the property that for any set $E$ of at most $n/(100\log^{11}{n})$ entries and for any choices of vertex subsets $\cR'\subseteq \cR^{\textnormal{f}}$ and $\cC'\subseteq \cC^{\textnormal{f}}$ and $\cS'\subseteq \cS^{\textnormal{f}}$ which do not correspond to one of the $2^3$ left kernel vectors described above (that is, at least one of $\cR',\cC',\cS'$ is nonempty and proper), there is some hyperedge of $\mc Q(\cR^{\textnormal{f}},\cC^{\textnormal{f}},\cS^{\textnormal{f}},E)$ which intersects $\cR'\cup \cC'\cup \cS'$ in an odd number of vertices.

Now, we specify $\cR^{\textnormal{f}},\cC^{\textnormal{f}},\cS^{\textnormal{f}}$ as follows. For a row $r$, we say that a set of intercalates involving $r$ are \emph{externally disjoint} if every row other than $r$, every column and every symbol is involved in at most one intercalate in the set. Let $\cR^{\textnormal{f}}$ be the set of all rows $r$ for which there exist at least $n/(10\log^{11}{n})$ externally disjoint intercalates in $\cI^{\textnormal{stable}}$ which all involve $r$. Define $\cC^{\textnormal{f}},\cS^{\textnormal{f}}$ similarly. 

\begin{claim*}
Each of $\cR^{\textnormal{f}},\cC^{\textnormal{f}},\cS^{\textnormal{f}}$ has size greater than $n-\ell$.
\end{claim*}
\begin{claimproof}
    Suppose for contradiction (and without loss of generality) that there is a set $\cR_0$ of $\ell$ rows which are not in $\cR^{\textnormal{f}}$. Then we can use the intercalate-expansion property to greedily find $n/10$ stable intercalates in $T \cap L$ which all intersect $\cR_0$ in one row and otherwise involve disjoint rows, columns, and symbols. So, there must be some $r\in \cR_0$ involved in $(n/10)/|\cR_0|\ge n/(10\log^{11}{n})$ stable intercalates which involve disjoint rows, columns, and symbols (except that they all involve $r$). This contradicts the choice of $\cR_0$.
\end{claimproof}
With $\cR^{\textnormal{f}}, \cC^{\textnormal{f}}, \cS^{\textnormal{f}}$ specified, we set $\cR^{\textnormal{s}}:= [n] \setminus\cR^{\textnormal{f}}$, $\cC^{\textnormal{s}}:= [n] \setminus\cC^{\textnormal{f}}$, $\cS^{\textnormal{s}}:= [n] \setminus\cS^{\textnormal{f}}$, and let $\cI$ be the set of intercalates in $\cI^{\textnormal{stable}}$ that do not intersect $\cR^{\textnormal{s}}, \cC^{\textnormal{s}},\cS^{\textnormal{s}}$. Fix some set of entries $E$ of size at most $n/(100\log^{11}{n})$. Recall that the defining properties of $\cR^{\textnormal{f}},\cC^{\textnormal{f}},\cS^{\textnormal{f}}$ were that they are included in many (at least $n/(10\log^{11}{n})$) externally disjoint stable intercalates from $\cI^{\textnormal{stable}}$. Many of those intercalates lie in $\cI_E$,
and therefore correspond to hyperedges of $\mc Q(\cR^{\textnormal{f}},\cC^{\textnormal{f}},\cS^{\textnormal{f}},E)$. Specifically, each ``bad'' row, column, symbol or entry (which is in $\cR^{\textnormal{s}},\cC^{\textnormal{s}},\cS^{\textnormal{s}}$ or in $E$) kills at most one of our externally disjoint stable intercalates, so each vertex of $\mc Q(\cR^{\textnormal{f}},\cC^{\textnormal{f}},\cS^{\textnormal{f}},E)$ is contained in at least $n/(10\log^{11}{n}) - 3\ell - n/(100\log^{11}{n}) \ge n/(20\log^{11}{n})$ externally disjoint 6-uniform hyperedges in $Q(\cR^{\textnormal{f}},\cC^{\textnormal{f}},\cS^{\textnormal{f}},E)$.

Now, consider vertex subsets $\cR'\subseteq \cR^{\textnormal{f}}$ and $\cC'\subseteq \cC^{\textnormal{f}}$ and $\cS'\subseteq \cS^{\textnormal{f}}$ which are not all trivial (at least one of these subsets is nonempty and proper). We need to prove there is some hyperedge of $Q(\cR^{\textnormal{f}},\cC^{\textnormal{f}},\cS^{\textnormal{f}},E)$ which intersects $\cR'\cup \cC'\cup \cS'$ in an odd number of vertices.

\medskip
\noindent\textbf{Case 1: }First, suppose that for each $Z \in \{\cR', \cC', \cS'\}$, either $Z$ or its complement (in $\cR^{\textnormal{f}}, \cC^{\textnormal{f}},\cS^{\textnormal{f}}$ respectively) has size less than $\ell$. This gives us three ``small sets'' of size less than $\ell$. At least one of our small sets must be non-empty; let $v$ be a vertex in that set. Recall that $v$ is contained in at least $n/(20\log^{11}{n})$ externally disjoint hyperedges of $\mc Q(\cR^{\textnormal{f}},\cC^{\textnormal{f}},\cS^{\textnormal{f}},E)$, so at least one of these hyperedges (externally) avoids our three small sets. Any such hyperedge has odd intersection with $\cR'\cup \cC'\cup \cS'$ (its intersection with $\cR'$ has size $1$, whereas its intersections with $\cC'$ and $\cS'$ each have size $0$ or $2$).

\medskip
\noindent\textbf{Case 2: }If Case 1 does not occur, we may assume without loss of generality that $\cR'$ and $\cR^{\textnormal{f}}\setminus \cR'$ both have size at least $\ell$. Then we apply the intercalate-expansion property directly. Let $\cC_{E}$ be the set of the at most $n/(100\log^{11}{n})$ many columns that contain an entry in $E$. We take $\cR'$ and $\cR^{\textnormal{f}}\setminus \cR'$ as $\mathscr R^*$ and $\mathscr R$ respectively; we take the larger of $\cC' \setminus \cC_{E}$ and $\cC^{\textnormal{f}}\setminus (\cC' \cup \cC_{E}) $ as $\mathscr C = \sC^*$; and we take the larger of $\cS'$ and $\cS^{\textnormal{f}}\setminus \cS'$ as $\mathscr S = \sS^*$. This again gives us a stable intercalate $I$ in $T \cap L$ with the desired odd intersection with $\cR'\cup \cC'\cup \cS'$. Note that $I$ avoids $E$ since the columns in $\sC=\sC^*$ do not contain entries of $E$. Thus, $I$ corresponds to an edge of $\mc Q(\cR^{\textnormal{f}},\cC^{\textnormal{f}},\cS^{\textnormal{f}}, E)$, as desired.
\end{proof}

\section{Proof of the main result}

In this section, we give the proof of \Cref{thm:main} (subject to a technical lemma whose proof we defer to the next section). We use the switching method: we start with any Latin square $L$ with $N_{\mathrm{row}}(L)  = |\vec r|_1 \pmod 2$, $N_{\mathrm{col}}(L) = |\vec c|_1 \pmod 2$, $N_{\mathrm{sym}}(L) = |\vec s|_1 \pmod 2$ and show how to perform switching operations to obtain a Latin square $L'$ with $(\vec\xi_{\mathrm{row}}(L'),\vec\xi_{\mathrm{col}}(L'),\vec\xi_{\mathrm{sym}}(L')) = (\vec r, \vec c, \vec s)$.

Recalling the definitions in \cref{sec:intercalates_and_effects}: we use stable intercalates (more concretely, robust parity generators) to control the parities of all the non-stubborn rows, columns, and symbols. To fix the parities of the stubborn rows, columns, and symbols, we pair each of them up with a non-stubborn \emph{mate} of the same type (row, column, or symbol), and seek to switch a short even-length cycle in the permutation induced by each pair. If no such cycle exists, we create one via intercalate switches that intersect the non-stubborn mate (but not the stubborn one).

To implement this plan, we first need a few definitions. We first define a \emph{mate system}, which is the aforementioned pairing of stubborn rows, columns, and symbols with non-stubborn mates.

\begin{definition}[Mate systems]
    For a Latin square $L$ and sets $\cR^{\textnormal{s}}, \cC^{\textnormal{s}}, \cS^{\textnormal{s}}$ of rows, columns, and symbols respectively, a \emph{mate system} $ (\mathfrak{R}^{\textnormal{p}}, \mathfrak{C}^{\textnormal{p}}, \mathfrak{S}^{\textnormal{p}})$  consists of a set $\mathfrak{R}^{\textnormal{p}}$ of disjoint ordered pairs of rows, a set $\mathfrak{C}^{\textnormal{p}}$ of disjoint ordered pairs of columns, and a set $\mathfrak{S}^{\textnormal{p}}$ of disjoint ordered pairs of symbols such that each $r_1 \in \cR^{\textnormal{s}}$ has a unique distinct \emph{mate} $r_2 \notin \cR^{\textnormal{s}}$ such that $(r_1, r_2) \in \mathfrak{R}^{\textnormal{p}}$, and there are no other pairs in $\mathfrak{R}^{\textnormal p}$ (and similarly for columns and symbols). We let $\cR^{\textnormal{m}}$ denote the set of \emph{mates} $r_2$ such that $(r_1,r_2) \in \mathfrak{R}^{\textnormal{p}}$ for some $r_1 \in \cR^{\textnormal{s}}$, and define $\cC^{\textnormal{m}}$ and $\cS^{\textnormal{m}}$ analogously. Note that $\mathfrak{R}^{\textnormal{p}}$ gives a bijection between $\cR^{\textnormal{s}}$ and $\cR^{\textnormal{m}}$, and similarly for $\mathfrak{C}^{\textnormal{p}}$ and $\mathfrak{S}^{\textnormal{p}}$.
\end{definition}

As mentioned previously, our goal is to fix the parities of the stubborn rows, columns, and symbols by switching a short even-length cycle in the permutation induced by each stubborn row, column, or symbol and its mate. A Latin square in which this is possible (along with the corresponding mate system and short even-length cycles) is what we call a \emph{flexible Latin square}, defined as follows.

\begin{definition}[Flexible Latin squares]
    \label{def:flexible_LS}
    For sets $\cR^{\textnormal{s}}, \cC^{\textnormal{s}}, \cS^{\textnormal{s}}$ of rows, columns, and symbols respectively and a positive integer $k$, a $(\cR^{\textnormal{s}},\cC^{\textnormal{s}},\cS^{\textnormal{s}},k)$-\emph{flexible Latin square} is a Latin square $L$ together with a mate system $(\mathfrak{R}^{\textnormal{p}},\mathfrak{C}^{\textnormal{p}},\mathfrak{S}^{\textnormal{p}})$ for $\cR^{\textnormal{s}}, \cC^{\textnormal{s}}, \cS^{\textnormal{s}}$ and a set of even-length row, column, and symbol cycles \[
    \cY = \{\cY(r_1,r_2)\}_{(r_1,r_2)\in\mathfrak{R}^{\textnormal{p}}} \cup \{\cY(c_1,c_2)\}_{(c_1,c_2)\in\mathfrak{C}^{\textnormal{p}}} \cup \{\cY(s_1,s_2)\}_{(s_1,s_2)\in\mathfrak{S}^{\textnormal{p}}},
    \]
    where $\cY(r_1,r_2)$ for $(r_1,r_2)\in\mathfrak{R}^{\textnormal{p}}$ is a cycle in the permutation $\sigma_L(r_1,r_2)$, and similarly for $\mathfrak{C}^{\textnormal{p}},\mathfrak{S}^{\textnormal{p}}$, such that:
    \begin{enumerate}
        \item \label{def:even_short_cycles} each cycle in $\cY$ has length which is even and at most $k$;
        \item \label{def:disjoint_cycles} the cycles in $\cY$ are disjoint;\item \label{def:mate_repair_system_cycles_vs_rows_cols_symbs} all cycles $\cY(r_1,r_2)$ for $(r_1,r_2) \in \mathfrak{R}^{\textnormal{p}}$ avoid $\cC^{\textnormal{s}},\cS^{\textnormal{s}}$, and similarly all cycles $\cY(c_1,c_2)$ for $(c_1,c_2) \in \mathfrak{C}^{\textnormal{p}}$ avoid $\cR^{\textnormal{s}},\cS^{\textnormal{s}}$ and all cycles $\cY(s_1,s_2)$ for $(s_1,s_2) \in \mathfrak{S}^{\textnormal{p}}$ avoid $\cR^{\textnormal{s}},\cC^{\textnormal{s}}$.
    \end{enumerate}
\end{definition}

Not all Latin squares equipped with a mate system are flexible. Indeed, some pairs of rows, columns, or symbols might not induce any short even-length cycles at all. As a first step in our proof, we find a set of disjoint intercalates, such that switching these intercalates transforms a Latin square with a mate system into a flexible Latin square. We refer to such a set of intercalates as \emph{repairing}, defined below. We first define $T$-\emph{disjointness}, which is required for the definition of repairing intercalates. We would like the template $T$ we use to avoid the repairing intercalates, since it is important that switching them does not interfere with the parity generator, which will be contained in $T$.

\begin{definition}
For a Latin square $L\in \mc L_n$ and template $T\subseteq [n]^2$, we say that an intercalate $I$ in $L$ is \emph{$T$-disjoint} if its four entries are disjoint from $T\cap L.$
\end{definition}

\begin{definition}[A repairing set of intercalates]
    \label{def:repairing_intercalates}
    For a Latin square $L\in \cL_n$, a template $T\subseteq [n]^2$, a positive integer $k$, and a mate system $(\mathfrak{R}^{\textnormal{p}},\mathfrak{C}^{\textnormal{p}},\mathfrak{S}^{\textnormal{p}})$, we say that a set $\cI(\mathfrak{R}^{\textnormal{p}},\mathfrak{C}^{\textnormal{p}},\mathfrak{S}^{\textnormal{p}})$ of intercalates is $(T,k)$-\emph{repairing} if
    \begin{enumerate}
        \item \label{def:mate_repair_system_ints}
        the set $\cI(\mathfrak{R}^{\textnormal{p}},\mathfrak{C}^{\textnormal{p}},\mathfrak{S}^{\textnormal{p}})$ can be described as a disjoint union $\bigsqcup_{(z_1,z_2)\in\mathfrak{R}^{\textnormal{p}}\cup\mathfrak{C}^{\textnormal{p}}\cup\mathfrak{S}^{\textnormal{p}}}\cI(z_1,z_2)$, where for each pair $(z_1, z_2) \in \mathfrak{R}^{\textnormal{p}} \cup \mathfrak{C}^{\textnormal{p}} \cup \mathfrak{S}^{\textnormal{p}}$ the set $\cI(z_1,z_2)$ consists of at most two $T$-disjoint intercalates intersecting $z_2$;
        \item \label{def:mate_repair_system_ints_disjoint} all intercalates in $\cI(\mathfrak{R}^{\textnormal{p}},\mathfrak{C}^{\textnormal{p}},\mathfrak{S}^{\textnormal{p}})$ are disjoint;
        \item \label{def:mate_repair_system_ints_vs_rows_cols_symbs} no intercalate in $\cI (z_1,z_2)$ for $(z_1,z_2) \in \mathfrak{R}^{\textnormal{p}} \cup \mathfrak{C}^{\textnormal{p}} \cup \mathfrak{S}^{\textnormal{p}}$ intersects $\cR^{\textnormal{s}} \cup \cC^{\textnormal{s}} \cup \cS^{\textnormal{s}} \cup \cR^{\textnormal{m}} \cup \cC^{\textnormal{m}} \cup \cS^{\textnormal{m}} \setminus \{z_2\}$;
        \item \label{def:mate_repair_system_ints_create_cycles} the Latin square $\mr{sw}_{\cI(\mathfrak{R}^{\textnormal{p}},\mathfrak{C}^{\textnormal{p}},\mathfrak{S}^{\textnormal{p}})}(L)$ obtained from $L$ by
        switching all intercalates in $\cI(\mathfrak{R}^{\textnormal{p}}, \mathfrak{C}^{\textnormal{p}}, \mathfrak{S}^{\textnormal{p}})$ is $(\cR^{\textnormal{s}},\cC^{\textnormal{s}},\cS^{\textnormal{s}},k)$-flexible with mate system $(\mathfrak{R}^{\textnormal{p}},\mathfrak{C}^{\textnormal{p}},\mathfrak{S}^{\textnormal{p}})$ and cycles $\cY(z_1,z_2)$ for $(z_1,z_2) \in \mathfrak{R}^{\textnormal{p}} \cup \mathfrak{C}^{\textnormal{p}} \cup \mathfrak{S}^{\textnormal{p}}$.
    \end{enumerate}
    Note that all intercalates in $\cI(\mathfrak{R}^{\textnormal{p}},\mathfrak{C}^{\textnormal{p}},\mathfrak{S}^{\textnormal{p}})$ are disjoint due to part~(\ref{def:mate_repair_system_ints_disjoint}) of \cref{def:repairing_intercalates},
    and hence
    $\mr{sw}_{\cI(\mathfrak{R}^{\textnormal{p}},\mathfrak{C}^{\textnormal{p}},\mathfrak{S}^{\textnormal{p}})}(L)$ in part~(\ref{def:mate_repair_system_ints_create_cycles})  is well-defined.
\end{definition}

Given any Latin square $L$, the main technical challenge in our proof is to find a repairing set of intercalates that would turn $L$ into a flexible Latin square, as well as a parity generator that is disjoint from the repairing set of intercalates and the short even-length cycles in the flexible Latin square. This will allow us to fix the parities of both the stubborn \emph{and} the non-stubborn rows, columns, and symbols, without these two processes interfering with each other. The next result, which is our main technical lemma, guarantees that this is possible with the help of a template $\bfT$, which will contain the parity generator but will be disjoint from the repairing set of intercalates.

\begin{lemma}[Repairing intercalates and a parity generator]
\label{lemma:T-disjoint_chains_rows_cols_symbols}
Let $\bfT \subseteq [n]^2$ be a random subset of row/column pairs, where each is included with probability $\varepsilon = o(\log^{-1}{n})$ independently. Let $\bfL \sim \unif(\cL_n)$. With probability at least $ 1 - \exp(-\omega(n\log^2{n}))$, the following holds. 
\begin{itemize}
    \item There are sets $\cR^{\textnormal{s}},\cC^{\textnormal{s}},\cS^{\textnormal{s}} \subset [n]$ of stubborn rows, columns, and symbols respectively, of size at most $\ell = \log^{11}{n}$ each, along with a mate system $(\mathfrak{R}^{\textnormal{p}}, \mathfrak{C}^{\textnormal{p}}, \mathfrak{S}^{\textnormal{p}})$ for $\cR^{\textnormal{s}}, \cC^{\textnormal{s}}, \cS^{\textnormal{s}}$ and a set $\cI(\mathfrak{R}^{\textnormal{p}}, \mathfrak{C}^{\textnormal{p}}, \mathfrak{S}^{\textnormal{p}})$ of intercalates that is $(\bfT,n/\log^{100}{n})$-repairing with associated cycles $\cY(z_1,z_2)$ in $\mr{sw}_{\cI(\mathfrak{R}^{\textnormal{p}},\mathfrak{C}^{\textnormal{p}},\mathfrak{S}^{\textnormal{p}})}(\bfL)$ for $(z_1,z_2) \in \mathfrak{R}^{\textnormal{p}} \cup \mathfrak{C}^{\textnormal{p}} \cup \mathfrak{S}^{\textnormal{p}}$. 
    \item Furthermore, let $\cI^{\textnormal{s}}$ be the set of stable intercalates in $\bfT \cap \bfL = \bfT \cap \mr{sw}_{\cI(\mathfrak{R}^{\textnormal{p}},\mathfrak{C}^{\textnormal{p}},\mathfrak{S}^{\textnormal{p}})}(\bfL)$ disjoint from $\cR^{\textnormal{s}}, \cC^{\textnormal{s}}, \cS^{\textnormal{s}} ,$ and disjoint from all cycles $\cY(z_1,z_2)$ for $(z_1, z_2) \in \mathfrak{R}^{\textnormal{p}} \cup \mathfrak{C}^{\textnormal{p}} \cup \mathfrak{S}^{\textnormal{p}}$. Then $\cI^{\textnormal{s}}$ is a parity generator with respect to $\bfT \cap \mr{sw}_{\cI(\mathfrak{R}^{\textnormal{p}},\mathfrak{C}^{\textnormal{p}},\mathfrak{S}^{\textnormal{p}})}(\bfL), \cR^{\textnormal{f}}, \cC^{\textnormal{f}}, \cS^{\textnormal{f}}$, where $\cR^{\textnormal{f}}:=[n]\setminus \cR^{\textnormal{s}}$,\,\, $\cC^{\textnormal{f}}:=[n]\setminus \cC^{\textnormal{s}}$,\, and $\cS^{\textnormal{f}}:=[n]\setminus \cS^{\textnormal{s}}$.
\end{itemize}
\end{lemma}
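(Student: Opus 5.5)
Throughout, probabilities are taken jointly over $\bfT$ and $\bfL$. The first step is to apply \cref{lem:few_stubborn_rows_cols_symbols}. With probability $1-\exp(-\omega(n\log^2 n))$ it supplies stubborn sets $\cR^{\textnormal{s}},\cC^{\textnormal{s}},\cS^{\textnormal{s}}$, each of size at most $\ell$. It also supplies a set $\cI$ of stable intercalates in $\bfT\cap\bfL$ that is an $n/(100\log^{11}n)$-robust parity generator. The second bullet of the lemma then comes almost for free, as follows.

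\begin{itemize}
\item The repairing intercalates are $\bfT$-disjoint. Switching them therefore does not change $\bfT\cap\bfL$, and the stable intercalates (and their parity-generator property) are unaffected.
\item The cycles $\cY(z_1,z_2)$ number at most $3\ell$ and each has length at most $n/\log^{100}n$. They occupy at most $3\ell\cdot n/\log^{100}n\ll n/(100\log^{11}n)$ entries of the square.
\item Letting $E$ be the set of these entries, robustness shows that $\cI_E$ is a parity generator. We then check that $\cI^{\textnormal{s}}\supseteq\cI_E$, or take $\cI^{\textnormal{s}}=\cI_E$ after the natural reinterpretation of ``disjoint from the cycles''.
\end{itemize}

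So the whole task is to construct the mate system and the repairing set.

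We build the mate system and repairing intercalates greedily, one stubborn element at a time. By symmetry between rows, columns and symbols, it suffices to handle a stubborn row $r_1$. We must find a non-stubborn mate $r_2$, not yet used, and at most two $\bfT$-disjoint intercalates through $r_2$ (avoiding everything previously used) whose switching produces in $\sigma(r_1,r_2)$ an even cycle of length at most $k=n/\log^{100}n$ that avoids $\cC^{\textnormal{s}},\cS^{\textnormal{s}}$. By \cref{fact:intercalate-switching}, a single intercalate in $r_2$ whose two columns $c,c'$ lie on a common cycle at cyclic distance $d$, with $d$ even and $d\le k$, excises a cycle of length $d$. Two intercalates can first merge two odd cycles (if all cycles are short and odd) and then excise an even piece. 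The ``forbidden'' set has size only $\mathrm{polylog}(n)\cdot k$ and must be avoided throughout. The key point is to prove that there are many candidate mates: for most rows $x$ outside a small set, the required intercalates exist.

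To prove that the required intercalates exist, condition on the rows $\{r_1\}\cup X$, where $X$ is a set of, say, $n/2$ candidate mates. Apply \cref{lem:LS-to-hypergraphs} with $W=\bfL[\{r_1\}\cup X]$, $w\approx n/2$. The property to transfer is \emph{it fails for many $x\in X$ that some other row $y\notin\{r_1\}\cup X$ and columns $c,c'$ in a common cycle of $\sigma(r_1,x)$ at even distance at most $k$ form an intercalate $\{(x,c),(x,c'),(y,c),(y,c')\}$ avoiding $\bfT$ and the forbidden sets}. Since the rows of $W$ are fixed, the cycle structures of all $\sigma(r_1,x)$ are known. The bad event is monotone decreasing in the random hyperedges (absence of intercalates). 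In $\mb G^{(3)}(W,p)$ with $p=\alpha/(n-w)$, each $x$ has about $n\cdot k$ admissible pairs $(c,c')$. Each pair is completed by some $y$ with probability about $n\cdot p^2\asymp 1/n$, so we expect about $k\gg 1$ intercalates per $x$. Failure for a set of $\log^{12}n$ rows $x$, which involve disjoint sets of hyperedges, has probability $\exp(-\Omega(k\log^{12}n))=\exp(-\omega(n\log^2 n))$. This beats the $\exp(O(n\log^2 n))$ transfer loss and the union bound over $W$-independent choices. For this to work we want $k\log^{12}n\gg n\log^2 n$, which fails for $k=n/\log^{100}n$. Hence we instead aggregate over all $\Theta(n)$ candidate $x$ simultaneously: we show that all but $\ell$ of them succeed, with failure probability $\exp(-\Omega(\ell k))$. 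We adjust the exponents (or use Freedman, \cref{freedman_inequality}) so that the tail is $\exp(-\omega(n\log^2n))$. The merge case (all cycles odd and short) is handled similarly, using an intercalate joining two odd cycles. It is followed by a second excision; alternatively, we use directly an intercalate whose columns lie in two different odd cycles, creating a single even cycle of length at most $2k$ when both are short.

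The main obstacle is this transfer step. The quantity must be aggregated over many mates so that the probability is small enough to survive the $\exp(O(n\log^2 n))$ factor of \cref{lem:LS-to-hypergraphs}. Additional work is needed to keep the avoidance constraints (which depend on earlier greedy choices, $\bfT$, and the stubborn sets) independent of the random hyperedges. I would handle this by fixing $\bfT$ independently and by union-bounding over the $n^{O(\ell)}$ possible stubborn and forbidden configurations. Finally, we combine the statements using \cref{fact:joint_prob_vs_most_outcomes} and a union bound over the choice of $r_1$ and the rectangle rows.
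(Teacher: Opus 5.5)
Your handling of the second bullet matches the paper: the repairing intercalates are $\bfT$-disjoint, so $\bfT\cap\bfL$ is unchanged, and the at most $6\ell\cdot n/\log^{100}n$ cycle entries are absorbed by the $n/(100\log^{11}n)$-robustness. Your excision and merge mechanics are also fine. The gap is in the probabilistic core, which you flag but do not resolve.

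\textbf{The probability bound is too weak.} Your fallback, ``all but $\ell$ candidates succeed, with failure probability $\exp(-\Omega(\ell k))$'', is numerically the bound you had just rejected. Since $\ell k=n/\log^{89}n\ll n\log^2 n$, it cannot absorb the $\exp(O(n\log^2 n))$ loss in \cref{lem:LS-to-hypergraphs}. ``Adjusting the exponents'' is not available, because $k=n/\log^{100}n$ and $\ell=\log^{11}n$ are fixed by the statement. The supporting claim that different candidates $x$ involve disjoint sets of hyperedges is also false: all completing hyperedges lie in the same random rows and overlap heavily.

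\textbf{Helper rows are never made disjoint.} Even if many candidates succeed, you never ensure that their helper rows $y$ are distinct from each other and from the candidates. This disjointness is what makes the greedy step work, since each previously used row then blocks at most one candidate. It also gives the independence needed to make the intercalates avoid $\bfT$. If many candidates rely on the same $y$, one earlier choice can kill all of them.

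\textbf{The missing idea.} The paper (\cref{lem:chains} via \cref{lem:many-intercalates}) asks for much less per candidate but makes the bad event global.
\begin{itemize}
\item Only $n/\log^{1000}n$ good mates are needed, because the greedy step blocks just $O(\log^{11}n)$ of them.
\item The paper reveals $w=n/\log^{200}n$ candidate rows. It proves that every $h$-set of candidates and every $h$-set of helper rows, with $h=n/\log^{300}n$, are joined by a good intercalate.
\item Freedman's inequality is applied to the maximum number of disjoint good intercalates. This quantity has Lipschitz constant $3$ and mean $\Omega(h^2d^2/n^2)$, where $d=n/\log^{115}n$.
\item The resulting failure probability is $\exp(-\Omega(h^4d^4/n^6))=\exp(-n^2/\log^{1660}n)$.
\item This is small enough to union bound over all $2^{2w}$ pairs of subsets and over all $n!$ possible stubborn rows $D$ (so there is no need to condition on the stubborn row). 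It also covers the forbidden sets and absorbs the transfer loss.
\item A maximal family of disjoint (candidate, helper) pairs then leaves at most $h$ candidates unmatched. This yields many successes and disjoint helpers at once.
\end{itemize}

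\textbf{Your choice of $w$ also fails.} The half-intercalate count needs column sets of size $d\ge 3w$. This is because each column can have up to $w$ partners blocked by symbol conflicts with $W$. With $w\approx n/2$ and at most $k$ admissible partners per column along a cycle, a worst-case $W$ could block all of them. So $w$ must be small, as in the paper.
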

We postpone the proof of \cref{lemma:T-disjoint_chains_rows_cols_symbols} to \cref{sec:technical_lemma}.

One more ingredient we need is the following theorem by Cavenagh and Wanless \cite{CW16} which shows that the four possible parities of $(N_{\textnormal{row}}(\bfL),N_{\textnormal{col}}(\bfL))$ have approximately the same probability of being attained. 
\begin{theorem}[\cite{CW16}]\label{thm:equidistribution-on-parities}
Let $\bfL$ be a uniformly random $n\times n$ Latin square. Then for any $r,c\in \{0,1\}$ we have
\[
\mathbb{P}[\, \big(N_{\textnormal{row}}(\bfL),\, N_{\textnormal{col}}(\bfL)\, \big)=(r,c)\!\!\! \pmod 2]=\frac{1}{4}+o(1).
\]
\end{theorem}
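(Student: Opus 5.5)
The plan is to build a parity-changing involution on Latin squares, in the spirit of the switchings used throughout the paper. The key observation is that switching a row cycle preserves $N_{\mathrm{row}}\bmod 2$ and flips $N_{\mathrm{col}}\bmod 2$ exactly when the cycle has odd length. Fix rows $x,r$ and a cycle of $\sigma_L(x,r)$ of length $k$, and switch it.
\begin{itemize}
\item Each of rows $x$ and $r$ is composed with a $k$-cycle, so each changes parity by $k-1$. Since both change, $N_{\mathrm{row}}$ changes by an even amount.
\item Each of the $k$ columns involved has two entries transposed, so each of these columns flips parity. Hence $N_{\mathrm{col}}$ changes by $k \bmod 2$.
\end{itemize}
Moreover, switching a cycle of $\sigma_L(x,r)$ only replaces $\sigma_L(x,r)$ by a permutation with the same cycle supports (the switched cycle is inverted). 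So the rule ``switch the odd cycle of $\sigma_L(1,2)$ containing the smallest column'' is an involution on the set of squares where such a cycle exists. By symmetry, the same rule for symbol cycles between symbols $1,2$ flips $N_{\mathrm{row}}$ and preserves $N_{\mathrm{col}}$: by the row/column/symbol symmetry, the symbols of a symbol cycle play the role that columns played above, while rows and columns stand in for the two pairs of "outer" coordinates.

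\textbf{Step 1 (the involutions).} Let $\Phi_{\mathrm{row}}$ and $\Phi_{\mathrm{col}}$ be these two maps. I will work with $\Phi_{\mathrm{col}}$, the row-cycle map that flips $N_{\mathrm{col}}$, and $\Phi_{\mathrm{row}}$, the symbol-cycle map that flips $N_{\mathrm{row}}$. To make them commute and avoid interference, I will instead take the canonical cycle among a fixed family of $n/4$ disjoint row pairs for one map, and among disjoint symbol pairs for the other. Each map then has a fixed pair whose odd cycle it uses, and I must check that the choice made by one map is unchanged after applying the other.

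\textbf{Step 2 (counting).} Let $\cB$ be the set of squares on which either map is undefined, meaning all the designated pairs have only even cycles. Off $\cB$ the maps give bijections between the four classes $(N_{\mathrm{row}},N_{\mathrm{col}})\bmod 2$. So each class has probability $1/4\pm O(\Pr[\bfL\in\cB])$, and it suffices to show $\Pr[\bfL\in\cB]=o(1)$. For $n$ odd, $\cB=\emptyset$ trivially, since a permutation of an odd number of points must have an odd cycle.

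\textbf{Step 3 (main obstacle: $\cB$ is rare).} Here I would use a many-to-few switching. Given $L\in\cB$, split an even cycle of some designated pair $(x,r)$ into two odd cycles. To do this, switch an intercalate on row $r$ (avoiding row $x$) whose two columns lie on the same cycle of $\sigma_L(x,r)$ at odd distance. By \cref{fact:intercalate-switching}, this produces two odd cycles, and the resulting square is outside $\cB$.

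To guarantee many such intercalates, I would use \cref{lemma:existence_split_stable_intercalate}, with stubborn rows handled by using many designated pairs, to find, for all but $\ell$ of the pairs, many stable intercalates on the mate row. Among these, a constant fraction should have odd column distance. The point to argue is that column positions along the cycle are "random enough"; failing that, I would choose the pairing of rows after revealing structure, via \cref{lem:LS-to-hypergraphs} with $W$ the revealed rows.

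Each $L\in\cB$ then has $\Omega(n)$ images, while each image has only $O(n)$ preimages (at most $n^{O(1)}$ intercalates can be undone). Making the ratio of these two counts tend to infinity requires showing that bad squares have $\omega(1)$ times more valid switches than the reverse direction allows. I expect this quantitative step to be the hardest part.
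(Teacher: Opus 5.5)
Your Steps 1--2 give a sound reduction, modulo two slips. Note that the paper does not prove this statement itself; it imports it from \cite{CW16}.

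First, an odd \emph{symbol} cycle of length $k$ composes $k$ row permutations and $k$ column permutations with a transposition. So it flips both $N_{\mathrm{row}}$ and $N_{\mathrm{col}}$, not just $N_{\mathrm{row}}$. The map that flips only $N_{\mathrm{row}}$ uses odd \emph{column} cycles. This is harmless, because the two maps still act transitively on the four classes.

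Second, no commutation is needed. Switching inside one designated pair leaves the other pairs untouched, and inverting a cycle preserves cycle supports. So each map separately is an involution off its own bad set $\mathcal{B}_i$ that swaps two parity classes, and the four classes agree up to $O(|\mathcal{B}_1|+|\mathcal{B}_2|)$. For odd $n$ both bad sets are empty, so that case is finished.

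The genuine gap is Step 3, which for even $n$ is the whole theorem: you must show $\Pr[\bfL\in\mathcal{B}]=o(1)$, and your sketched switching does not give this.

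\emph{The degree comparison goes the wrong way.} Your forward count is $\Omega(n)$, essentially one usable intercalate per designated pair. That is also all a matching argument in the style of \cref{lem:chains} and \cref{lem:many-intercalates} provides. In reverse, the pair is forced, since it is the unique designated pair with odd cycles. But you must then choose an intercalate through row $r$ of that pair, avoiding $x$, with one column in each of the two odd cycles. The only available bound on that number is the trivial $\binom{n}{2}$. Your own phrase ``$O(n)$ preimages (at most $n^{O(1)}$\dots)'' concedes this.

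\emph{What an $o(1)$ bound would require.} You need the forward degree to exceed the backward degree by a factor $\omega(1)$. That means constant-factor control of intercalates through individual rows. At minimum you would need an order-$n^2$ aggregate lower bound on good switches together with an $o(n^2)$ upper bound on intercalates through any single row. This is exactly what the paper says its methods cannot do. \cref{lem:random_LS_to_triangle_removal} loses a factor $\exp(O(n\log^2 n))$, so it only transfers events of probability $\exp(-\omega(n\log^2 n))$, and statements about a single row are not rare enough for that.

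\emph{The odd-distance condition is unsupported.} You hope a constant fraction of intercalates have their columns at odd distance on a common cycle of $\sigma_L(x,r)$, but give no argument. It also fails outright for pairs whose cycles all have length $2$ or $4$. There, an odd-distance split would create a fixed point of $\sigma_L(x,r)$, which is impossible in a Latin square, and merging two even cycles gives another even cycle.

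So your proposal reduces the theorem to the claim that some designated pair has an odd cycle with probability $1-o(1)$. That claim is where all the difficulty lies, and it is left unproved.
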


We are now ready to prove \Cref{thm:main} assuming \cref{lemma:T-disjoint_chains_rows_cols_symbols}. Starting from a Latin square $L$ with $N_{\mathrm{row}}(L)  = |\vec r|_1 \pmod 2$, $N_{\mathrm{col}}(L)  = |\vec c|_1 \pmod 2$, $N_{\mathrm{sym}}(L)  = |\vec s|_1 \pmod 2$, we perform a sequence of three steps of switchings to obtain a Latin square $L'$ with the prescribed parities of all rows, columns, and symbols. We first apply \cref{lemma:T-disjoint_chains_rows_cols_symbols} to prepare a repairing set of intercalates and a parity generator such that all switchings can be performed without interfering with each other. As a first step of the switching, we switch the intercalates in the repairing set to ensure that for each stubborn row, column, and symbol, there is a short even-length cycle in the permutation induced by it and its mate. We then set aside these short even-length cycles without switching them yet, since doing so would potentially change the set of stable intercalates and destroy our parity generator\footnote{Our short even-length cycle switches will be disjoint from the intercalates we need to switch in the parity generator, so these two types of switching operations do in principle commute. However, since the first type of switching can affect which intercalates are stable, the order of operations we describe here makes it much cleaner to reason about the number of switchings in both directions.}. Instead, as a second step of the switching, we switch some stable intercalates that are part of the parity generator in order to get the desired parities for all non-stubborn rows, columns, and symbols (taking into account the changes that we anticipate due to the short even-length cycle switchings). Finally, as a third step of the switching, we switch the short even-length cycles that are required to fix the parities of the stubborn rows, columns, and symbols.

\begin{proof}[Proof of \Cref{thm:main}]

The fact that \[\mathbb{P}\big[(\vec\xi_{\textnormal{row}}(\mbf L),\vec\xi_{\textnormal{col}}(\mbf L),\vec\xi_{\textnormal{sym}}(\mbf L))=(\vec r,\vec c, \vec s)\big] \leq 2^{-3n+o(n)}\] follows from \cite[Theorem 1.3(5)]{kwan2025parities}. It remains to show the matching lower bound.

\medskip\noindent\textit{Set-up.} Fix any $\vec r,\vec c,\vec s\in \mb F_2^n$ satisfying $|\vec r|_1+|\vec c|_1+|\vec s|_1\equiv f(n) \pmod 2$ and denote $\vec\eta=(\vec r, \vec c,\vec s)$.
Let $\bfT \subseteq [n]^2$ be a random subset of row/column pairs, where each is included with probability $\varepsilon = o(\log^{-1}{n})$ independently and let $\bfL \sim \unif(\cL_n)$. For $T\subseteq[n]^2$ and $L\in\cL_n$, we say that the pair $(T,L)$ is \emph{good} if the conclusion of \cref{lemma:T-disjoint_chains_rows_cols_symbols} holds for it. \cref{lemma:T-disjoint_chains_rows_cols_symbols} thus says that $(\bfT,\bfL)$ is good with probability at least $1-\exp(-\omega(n\log^2 n))$. By \cref{fact:joint_prob_vs_most_outcomes}(\ref{fact:joint_to_separate}) with probability at least $1-\exp(-\omega(n\log^2 n))$ the outcome $T$ of $\bfT$ is such that $(T,\bfL)$ is good with probability at least $1-\exp(-\omega(n\log^2 n))$.
For the rest of the proof let us fix such an outcome $T$ of $\bfT$. 

Next, also fix an outcome $L$ of $\bfL$ such that $(T,L)$ is good and such that $N_{\mathrm{row}}(L)=|\vec r|_1 \pmod 2$ and $N_{\mathrm{col}}(L)=|\vec c|_1 \pmod 2$; we know that $\bfL$ satisfies these two parity conditions simultaneously with probability $\frac{1}{4}+o(1)$ due to \cref{thm:equidistribution-on-parities}. Note that this also implies $N_{\mathrm{sym}}(L)=|\vec s|_1 \pmod 2$. Let $\cR^{\textnormal{s}},\cC^{\textnormal{s}},\cS^{\textnormal{s}} \subset [n]$ be the stubborn rows, columns, and symbols given by \cref{lemma:T-disjoint_chains_rows_cols_symbols}. Furthermore, let $(\mathfrak{R}^{\textnormal{p}}, \mathfrak{C}^{\textnormal{p}}, \mathfrak{S}^{\textnormal{p}})$ be the mate system for $\cR^{\textnormal{s}},\cC^{\textnormal{s}},\cS^{\textnormal{s}}$, and let $\cI(\mathfrak{R}^{\textnormal{p}}, \mathfrak{C}^{\textnormal{p}}, \mathfrak{S}^{\textnormal{p}})$ be the set of intercalates that is $(T,n/\log^{100}{n})$-repairing with associated cycles $\cY(z_1,z_2)$ for $(z_1,z_2) \in \mathfrak{R}^{\textnormal{p}} \cup \mathfrak{C}^{\textnormal{p}} \cup \mathfrak{S}^{\textnormal{p}}$ in $\mr{sw}_{\cI(\mathfrak{R}^{\textnormal{p}},\mathfrak{C}^{\textnormal{p}},\mathfrak{S}^{\textnormal{p}})}(L)$ guaranteed by the fact that $(T,L)$ is good.
Finally, let $\cI^{\textnormal{s}}$ be the set of stable intercalates in $T \cap \mr{sw}_{\cI(\mathfrak{R}^{\textnormal{p}},\mathfrak{C}^{\textnormal{p}},\mathfrak{S}^{\textnormal{p}})}(L)$ avoiding $\cR^{\textnormal{s}}, \cC^{\textnormal{s}}, \cS^{\textnormal{s}}, $ as well as avoiding all cycles $\cY(z_1,z_2)$ for $(z_1, z_2) \in \mathfrak{R}^{\textnormal{p}} \cup \mathfrak{C}^{\textnormal{p}} \cup \mathfrak{S}^{\textnormal{p}}$. Note that by \cref{lemma:T-disjoint_chains_rows_cols_symbols}, $\cI^{\textnormal{s}}$ is a parity generator with respect to $T \cap \mr{sw}_{\cI(\mathfrak{R}^{\textnormal{p}},\mathfrak{C}^{\textnormal{p}},\mathfrak{S}^{\textnormal{p}})}(L)$,\,\, $\cR^{\textnormal{f}} = [n]\setminus \cR^{\textnormal{s}}$,\,\, $\cC^{\textnormal{f}} = [n] \setminus\cC^{\textnormal{s}}$,\,\, $\cS^{\textnormal{f}} = [n] \setminus \cS^{\textnormal{s}}$. 

\medskip\noindent\textit{Switching procedure.} We would like to do some cycle switches so that all rows, columns, and symbols have their sequence of parities according to $\vec\eta=(\vec r, \vec c, \vec s)$. 
We first switch all the intercalates in $\cI(\mathfrak{R}^{\textnormal{p}},\mathfrak{C}^{\textnormal{p}},\mathfrak{S}^{\textnormal{p}})$. By the properties of $(T, n/\log^{100}{n})$-repairing sets of intercalates, for each $(z_1,z_2)\in \mathfrak{R}^{\textnormal{p}}\cup \mathfrak{C}^{\textnormal{p}}\cup \mathfrak{S}^{\textnormal{p}}$ there exists an even-length cycle $\cY(z_1,z_2)$ of length at most $n/\log^{100}n$ in $\sigma_{L'}(z_1,z_2)$ of $L':=\mr{sw}_{\cI(\mathfrak{R}^{\textnormal{p}}, \mathfrak{C}^{\textnormal{p}},\mathfrak{S}^{\textnormal{p}})}(L)$. Let $\cR^{\textnormal{s}}_{\textnormal{off}}$, $\cC^{\textnormal{s}}_{\textnormal{off}}$ and $\cS^{\textnormal{s}}_{\textnormal{off}}$ be the stubborn rows, stubborn columns, and stubborn symbols respectively whose parities in $L$ do not agree with $\vec\eta$ and let $(\mathfrak{R}^{\textnormal{p}}_{\textnormal{off}},\mathfrak{C}^{\textnormal{p}}_{\textnormal{off}},\mathfrak{S}^{\textnormal{p}}_{\textnormal{off}})$ be their corresponding mate system (which is a subset of $(\mathfrak{R}^{\textnormal{p}}, \mathfrak{C}^{\textnormal{p}}, \mathfrak{S}^{\textnormal{p}})$). Next, let $\cY_{\textnormal{off}} = \cY_{\textnormal{off}}(L,\cY,\cR^{\textnormal{s}},\cC^{\textnormal{s}},\cS^{\textnormal{s}}):=\bigcup_{(z_1,z_2)\in \mathfrak{R}^{\textnormal{p}}_{\textnormal{off}}\cup\mathfrak{C}^{\textnormal{p}}_{\textnormal{off}}\cup\mathfrak{S}^{\textnormal{p}}_{\textnormal{off}}}\cY(z_1,z_2)$ be the union of the cycles of all stubborn rows, columns, and symbols with parities not agreeing with $\vec\eta$.

Let $\vec\nu = \vec \nu (\cY_{\textnormal{off}})\in(\mb F_2^n)^3$ be the vector recording, for every individual row, column, and symbol, the parity of the number of cycles in $\cY_{\textnormal{off}}$ intersecting it. Then as $\cI^{\textnormal{s}}$ is a parity generator with respect to $T \cap \mr{sw}_{\cI(\mathfrak{R}^{\textnormal{p}},\mathfrak{C}^{\textnormal{p}},\mathfrak{S}^{\textnormal{p}})}(L),$ $ \cR^{\textnormal{f}}, \cC^{\textnormal{f}}, \cS^{\textnormal{f}}$, we can switch some intercalates from $\cI^{\textnormal{s}}$ such that all the rows in $\cR^{\textnormal{f}}$, the columns in $\cC^{\textnormal{f}}$, and the symbols in $\cS^{\textnormal{f}}$ obtain the parity prescribed by $\vec\eta + \vec\nu$. Note that since $\cI^{\textnormal{s}}$ avoids $\cR^{\textnormal{s}}, \cC^{\textnormal{s}}, \cS^{\textnormal{s}}, $ as well as all cycles $\cY(z_1,z_2)$ in $\mr{sw}_{\cI(\mathfrak{R}^{\textnormal{p}},\mathfrak{C}^{\textnormal{p}},\mathfrak{S}^{\textnormal{p}})}(L)$, this step does not change the parity of $\cR^{\textnormal{s}}, \cC^{\textnormal{s}}, \cS^{\textnormal{s}}$ and preserves the cycles $\cY_{\textnormal{off}}$. Finally, we switch all the cycles $\cY_{\textnormal{off}}$. By properties (\cref{def:disjoint_cycles}) and (\cref{def:mate_repair_system_cycles_vs_rows_cols_symbs}) of \cref{def:flexible_LS}, these cycles are pairwise disjoint so we can switch all of them in any order, and also no stubborn row, column, or symbol is ever affected by any cycle other than the one associated with it. Thus, this makes the parities of all rows, columns, and symbols agree with $\vec\eta$.

\medskip\noindent\textit{Defining the auxiliary switching graph.} To make this quantitative, we also need the following. Let $\cA_1$ be the set of all Latin squares $L_1$ such that $(T,L_1)$ is good, $N_{\mathrm{row}}(L_1)=|\vec r|_1 \pmod 2$ and $N_{\mathrm{col}}(L_1)=|\vec c|_1 \pmod 2$, and let $\cA_4$ be the set of Latin squares $L_4$ whose row, column and symbol parities are given by
$\vec r,\vec c,\vec s$
respectively. 
Consider the auxiliary bipartite graph between $\cA_1$ and $\cA_4$ where we add an edge between $L_1 \in \cA_1$ and $L_4 \in \cA_4$ if $L_4$ can be obtained from $L_1$ by the procedure described above. Since $\Pr[\bfL \in \cA_1] = \frac{1}{4} + o(1)$, if we were to show that $|\cA_4|/|\cA_1| \geq 2^{-3n + o(n)}$, the conclusion of our theorem would follow. We would like to compare the sizes of the sets $\cA_1$ and $\cA_4$ by a switching argument. However, since the degrees in the auxiliary bipartite graph can vary due to the potentially varying size of the set of stable intercalates $\cI^{\textnormal{s}}$ in the second step, we 
split the switching described above into three steps: 
\begin{enumerate} 
    \item \label{switch_step:1} firstly, we switch the $(T,n/\log^{100}n)$-repairing set $\cI(\mathfrak{R}^{\textnormal{p}},\mathfrak{C}^{\textnormal{p}},\mathfrak{S}^{\textnormal{p}})$ of intercalates to create the short even-length cycles in $\cY$;
    \item \label{switch_step:2} secondly, we switch some stable intercalates in $\cI^{\textnormal{s}}$ to obtain the correct parities of the non-stubborn rows, columns, and symbols (taking into account the planned cycle switches for the third step);
    \item \label{switch_step:3} and thirdly, we switch the even-length cycles in $\cY_{\textnormal{off}}$ that repair the parities of the stubborn rows, columns, and symbols.
\end{enumerate}
We represent this as a switching graph with layers $\cA_1,\cA_2,\cA_3,\cA_4$ and a bipartite subgraph between each pair of consecutive layers, where an edge between two vertices indicates that we can switch from one to the other in the procedure described above. In particular, the edges between $\cA_1$ and $\cA_2$ correspond to step (\cref{switch_step:1}), the edges between $\cA_2$ and $\cA_3$ correspond to step (\cref{switch_step:2}) and finally the edges between $\cA_3$ and $\cA_4$ correspond to step (\cref{switch_step:3}). We formally define the sets $\cA_2$ and $\cA_3$ as follows (note that throughout the switching, we need to keep track of the stubborn rows, mate system and even-length cycles, which are recorded in $\cA_2$ and $\cA_3$).

The set $\cA_2$ consists of $4$-tuples $(L_2, (\cR^{\textnormal{s}},\cC^{\textnormal{s}},\cS^{\textnormal{s}}),(\mathfrak{R}^{\textnormal{p}},\mathfrak{C}^{\textnormal{p}},\mathfrak{S}^{\textnormal{p}}),\cY)$ obtained in the following way. For each $L_1 \in \cA_1$, recall that $(T,L_1)$ is good, so the conclusion of \cref{lemma:T-disjoint_chains_rows_cols_symbols} holds for it. Let $\cR^{\textnormal{s}},\cC^{\textnormal{s}},\cS^{\textnormal{s}}$ be the stubborn rows, columns, and symbols given by \cref{lemma:T-disjoint_chains_rows_cols_symbols} for $L_1$, let $(\mathfrak{R}^{\textnormal{p}},\mathfrak{C}^{\textnormal{p}},\mathfrak{S}^{\textnormal{p}})$ be the mate system for $\cR^{\textnormal{s}},\cC^{\textnormal{s}},\cS^{\textnormal{s}}$ given by the lemma, and let $\cI(\mathfrak{R}^{\textnormal{p}},\mathfrak{C}^{\textnormal{p}},\mathfrak{S}^{\textnormal{p}})$ be the $(T,n/\log^{100}{n})$-repairing set of intercalates with associated cycles $\cY$ given by the lemma. Let $L_2 := \mr{sw}_{\cI(\mathfrak{R}^{\textnormal{p}},\mathfrak{C}^{\textnormal{p}},\mathfrak{S}^{\textnormal{p}})}(L_1)$ be the Latin square obtained by applying step (\ref{switch_step:1}) of the switching procedure to $L_1$. We then let $(L_2, (\cR^{\textnormal{s}},\cC^{\textnormal{s}},\cS^{\textnormal{s}}),(\mathfrak{R}^{\textnormal{p}},\mathfrak{C}^{\textnormal{p}},\mathfrak{S}^{\textnormal{p}}),\cY) \in \cA_2$ and we add an edge in the switching graph between $L_1$ and $(L_2, (\cR^{\textnormal{s}},\cC^{\textnormal{s}},\cS^{\textnormal{s}}),(\mathfrak{R}^{\textnormal{p}},\mathfrak{C}^{\textnormal{p}},\mathfrak{S}^{\textnormal{p}}),\cY)$. All members $(L_2, (\cR^{\textnormal{s}},\cC^{\textnormal{s}},\cS^{\textnormal{s}}),(\mathfrak{R}^{\textnormal{p}},\mathfrak{C}^{\textnormal{p}},\mathfrak{S}^{\textnormal{p}}),\cY)$ of $\cA_2$ are obtained this way, which is why they have the following properties:
\begin{itemize}
    \item  $(\mathfrak{R}^{\textnormal{p}},\mathfrak{C}^{\textnormal{p}},\mathfrak{S}^{\textnormal{p}})$ is a mate system for $\cR^{\textnormal{s}}, \cC^{\textnormal{s}},\cS^{\textnormal{s}}$;
    \item $L_2$ is a $(\cR^{\textnormal{s}},\cC^{\textnormal{s}},\cS^{\textnormal{s}},n/\log^{100}{n})$-flexible Latin square with mate system  $(\mathfrak{R}^{\textnormal{p}},\mathfrak{C}^{\textnormal{p}},\mathfrak{S}^{\textnormal{p}})$ and set of cycles $\cY$; 
    \item $N_{\mathrm{row}}(L_2)=|\vec r|_1 \pmod 2$ and $N_{\mathrm{col}}(L_2)=|\vec c|_1 \pmod 2$;
    \item $\cR^{\textnormal{s}},\cC^{\textnormal{s}},\cS^{\textnormal{s}}$ each have size at most $\ell = \log^{11}{n}$ and have the property that the set $\cI^{\textnormal{s}}$ of stable intercalates in $T\cap L_2$ disjoint from $\cR^{\textnormal{s}}, \cC^{\textnormal{s}},\cS^{\textnormal{s}},$ and disjoint from all cycles in $\cY$ is a parity generator with respect to $T\cap L_2, \cR^{\textnormal{f}}, \cC^{\textnormal{f}},\cS^{\textnormal{f}}$, where $\cR^{\textnormal{f}}:=[n]\setminus \cR^{\textnormal{s}}$,\,\, $\cC^{\textnormal{f}}:=[n]\setminus \cC^{\textnormal{s}}$,\,\, $\cS^{\textnormal{f}}:=[n]\setminus \cS^{\textnormal{s}}$.
\end{itemize}

Similarly, let $\cA_3$ be the set of 4-tuples $(L_3,(\cR^{\textnormal{s}},\cC^{\textnormal{s}},\cS^{\textnormal{s}}),(\mathfrak{R}^{\textnormal{p}},\mathfrak{C}^{\textnormal{p}},\mathfrak{S}^{\textnormal{p}}),\cY)$ with the following property: there is some $(L_2,(\cR^{\textnormal{s}},\cC^{\textnormal{s}},\cS^{\textnormal{s}}),(\mathfrak{R}^{\textnormal{p}},\mathfrak{C}^{\textnormal{p}},\mathfrak{S}^{\textnormal{p}}),\cY) \in \cA_2$ such that applying step (\ref{switch_step:2}) of our switching procedure to $L_2$ yields $L_3$. That is, if $\vec \nu = \vec \nu (\cY_{\textnormal{off}}) $ for $\cY_{\textnormal{off}} = \cY_{\textnormal{off}}(L_2, \cY, \cR^s,\cC^s,\cS^s)$, then switching some of the stable intercalates $\cI^s$ in $T\cap L_2$ disjoint from $\cR^s,\cC^s,\cS^s$ and disjoint from all the cycles in $\cY$ to obtain row, column, and symbol parities prescribed by $\vec \eta + \vec \nu$ results in $L_3$. Thus, the row, column, and symbol parities of any $L_3$ that is part of a $4$-tuple in $\cA_3$ agree with $\vec \eta + \vec \nu$. We represent this by an edge in the switching graph between $(L_2,(\cR^{\textnormal{s}},\cC^{\textnormal{s}},\cS^{\textnormal{s}}),(\mathfrak{R}^{\textnormal{p}},\mathfrak{C}^{\textnormal{p}},\mathfrak{S}^{\textnormal{p}}),\cY)$ and $(L_3,(\cR^{\textnormal{s}},\cC^{\textnormal{s}},\cS^{\textnormal{s}}),(\mathfrak{R}^{\textnormal{p}},\mathfrak{C}^{\textnormal{p}},\mathfrak{S}^{\textnormal{p}}),\cY)$.

Finally, given a $4$-tuple $(L_3,(\cR^{\textnormal{s}},\cC^{\textnormal{s}},\cS^{\textnormal{s}}),(\mathfrak{R}^{\textnormal{p}},\mathfrak{C}^{\textnormal{p}},\mathfrak{S}^{\textnormal{p}}),\cY) \in \cA_3$, if we apply step~(\cref{switch_step:3}) of our switching procedure by switching the cycles in $\cY_{\textnormal{off}} =  \cY_{\textnormal{off}}(L_3,\cY,\cR^{\textnormal{s}},\cC^{\textnormal{s}},\cS^{\textnormal{s}})$, we get a Latin square $L_4$ whose row, column, and symbol parities agree with $\vec r, \vec c, \vec s$, so $L_4 \in \cA_4$. We represent this by an edge in the switching graph between $(L_3,(\cR^{\textnormal{s}},\cC^{\textnormal{s}},\cS^{\textnormal{s}}),(\mathfrak{R}^{\textnormal{p}},\mathfrak{C}^{\textnormal{p}},\mathfrak{S}^{\textnormal{p}}),\cY)$ and $L_4$.

Then our switching procedure takes a Latin square $L_1\in \cA_1$ and applies the three steps to it, first switching from $\cA_1$ to $\cA_2$ (step~(\ref{switch_step:1})), then from $\cA_2$ to $\cA_3$ (step~(\ref{switch_step:2})), and finally from $\cA_3$ to a Latin square $L_4\in \cA_4$ (step~(\ref{switch_step:3})) with row, column, and symbol parities that agree with $\vec\eta.$ Instead of comparing the relative sizes of $\cA_1$ and $\cA_2$ directly, we compare the relative sizes of the two sets involved in each step of the switching, decomposing
\[\frac{|\cA_4|}{|\cA_1|}=\frac{|\cA_2|}{|\cA_1|}\cdot \frac{|\cA_3|}{|\cA_2|}\cdot \frac{|\cA_4|}{|\cA_3|}.\]

\begin{figure}[!hbt]
\centering
\includegraphics{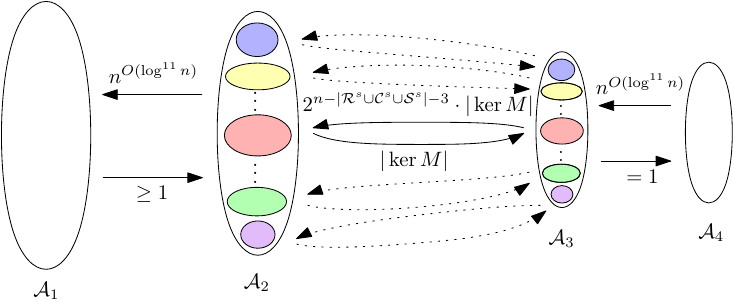}
\caption{Visualisation of the switching procedure. The arrows between consecutive sets with formulae correspond to the bounds on the degrees we prove. In the `middle' switching between $\cA_2$ and $\cA_3$, we consider each connected component of the bipartite graph independently (this is visualised by the different colours). In the picture we use $M$ to denote the matrix $M(\cR^{\textnormal{f}},\cC^{\textnormal{f}},\cS^{\textnormal{f}},\cI^{\textnormal{s}})$.}
\end{figure}

\medskip\noindent\textit{Analysis of the switching procedure.} Firstly, consider the switching between $\cA_1$ and $\cA_2$. By \cref{lemma:T-disjoint_chains_rows_cols_symbols}, each vertex in $\cA_1$ has degree at least $1$ to $\cA_2$. On the other hand, given $(L_2,(\cR^{\textnormal{s}},\cC^{\textnormal{s}},\cS^{\textnormal{s}}),(\mathfrak{R}^{\textnormal{p}},\mathfrak{C}^{\textnormal{p}},\mathfrak{S}^{\textnormal{p}}),\cY)\in \cA_2$, in order to obtain a neighbour of it $L_1 \in \cA_1$, we need to reverse the $(T,n/\log^{100}n)$-repairing intercalate switches. So for each $(z_1,z_2)\in \mathfrak{R}^{\textnormal{p}}\cup\mathfrak{C}^{\textnormal{p}}\cup\mathfrak{S}^{\textnormal{p}}$, we need to define $\cI(z_1,z_2) \in \cI(\mathfrak{R}^{\textnormal{p}},\mathfrak{C}^{\textnormal{p}},\mathfrak{S}^{\textnormal{p}})$. The set $\cI(z_1,z_2)$ consists of at most $2$ intercalates intersecting $z_2$, and an intercalate in $L_2$ intersecting $z_2$ (or lack thereof) can be chosen in at most $n\cdot\binom{n}{2} + 1\leq n^3$ many ways. Hence, the degree to $\cA_1$ of each vertex in $\cA_2$ is at most $ \left(n^{6}\right)^{3\log^{11}n}=n^{O(\log^{11} n)}$ as $|\mathfrak{R}^{\textnormal{p}}\cup\mathfrak{C}^{\textnormal{p}}\cup\mathfrak{S}^{\textnormal{p}}|\leq 3\log^{11}n$. Therefore, 
\[\frac{|\cA_2|}{|\cA_1|}\geq \frac{1}{n^{O(\log^{11} n)}}.\]

Next, consider the final switching between $\cA_3$ and $\cA_4.$ Each vertex in $\cA_3$ has degree to $\cA_4$ exactly $1$, as switching the prescribed even-length cycles in $\cY_{\textnormal{off}}$ produces a unique Latin square whose row, column, and symbol parities agree with $\vec\eta$. For a Latin square $L_4\in \cA_4$ whose parities agree with $\vec\eta$, to reverse the switching, we need to decide on the stubborn rows, columns, and symbols, assign a mate and an even-length cycle to each of them, and decide which of the stubborn rows, columns, and symbols were originally in $\cR^{\textnormal{s}}_{\textnormal{off}}\cup\cC^{\textnormal{s}}_{\textnormal{off}}\cup\cS^{\textnormal{s}}_{\textnormal{off}}$ in order to determine the cycles $\cY_{\textnormal{off}}$ to be switched. We can choose the stubborn rows, columns, and symbols in $\left(\sum_{i=0}^{\log^{11}n}\binom{n}{i}\right)^3\leq 2n^{3\log^{11}n}$ many different ways. For each such stubborn row, column, or symbol $z_1$, we need to pick its mate $z_2$, for which we have at most $n$ choices, and a cycle in $\sigma_{L_4}(z_1,z_2)$, for which we also have at most $n$ choices. Finally, we determine which of these were originally in $\cR^{\textnormal{s}}_{\textnormal{off}}\cup\cC^{\textnormal{s}}_{\textnormal{off}}\cup\cS^{\textnormal{s}}_{\textnormal{off}}$, for which we have at most $2^{3\log^{11}n}$ choices. Therefore, each vertex in $\cA_4$ has degree at most $ 2n^{3\log^{11}n}\cdot n^{6\log^{11}n}\cdot 2^{3\log^{11}n}= n^{O(\log^{11}n)}$ to $\cA_3.$ Hence, we get \[\frac{|\cA_4|}{|\cA_3|}\geq \frac{1}{n^{O(\log^{11}n)}}.\]

Finally, let us look at the switching between $\cA_2$ and $\cA_3.$ For $(L_2, (\cR^{\textnormal{s}},\cC^{\textnormal{s}},\cS^{\textnormal{s}}), (\mathfrak{R}^{\textnormal{p}},\mathfrak{C}^{\textnormal{p}},\mathfrak{S}^{\textnormal{p}}), \cY )\in \cA_2$, let $E$ consist of all entries in cycles $\cY(z_1,z_2)$ for $(z_1,z_2) \in \mathfrak{R}^{\textnormal{p}} \cup \mathfrak{C}^{\textnormal{p}} \cup \mathfrak{S}^{\textnormal{p}}$. 
Recall that the intercalates $\cI^{\textnormal{s}}$ that we consider for this switching are the stable intercalates in $L_2\cap T$ that avoid $\cR^{\textnormal{s}}\cup\cC^{\textnormal{s}}\cup\cS^{\textnormal{s}}$ and do not intersect $E$. We may notice that if two vertices are connected by an edge in the switching graph between $\cA_2$ and $\cA_3$, they have the same set of stable intercalates $\cI^{\textnormal{s}}$ (in the sense of the set $\cS(L\cap T)$, disregarding the ``switched state'' of each intercalate as in \cref{def:intercalate-abstraction}, and with any intercalates that intersect $(\cR^{\textnormal{s}}\cup\cC^{\textnormal{s}}\cup\cS^{\textnormal{s}}\cup E)$ removed). Thus, adjacent vertices in the layer of the switching graph between $\cA_2$ and $\cA_3$ have the same matrix $M(\cR^{\textnormal{f}},\cC^{\textnormal{f}},\cS^{\textnormal{f}},\cI^{\textnormal{s}})$, where $\cR^{\textnormal{f}} = [n] \setminus \cR^{\textnormal{s}}$ and similarly for $\cC^{\textnormal{f}}$ and $\cS^{\textnormal{f}}$. Hence, if we consider the connected components of the bipartite graph between $\cA_2$ and $\cA_3$, all vertices in the same connected component have the same matrix $M(\cR^{\textnormal{f}},\cC^{\textnormal{f}},\cS^{\textnormal{f}},\cI^{\textnormal{s}})$. 

Consider a connected component $C$ and denote $C_2=C\cap \cA_2$ and $C_{3}=C\cap \cA_3$. For a vertex $(L_2,(\cR^{\textnormal{s}},\cC^{\textnormal{s}},\cS^{\textnormal{s}}),(\mathfrak{R}^{\textnormal{p}},\mathfrak{C}^{\textnormal{p}},\mathfrak{S}^{\textnormal{p}}),\cY)\in C_2$, its degree to $C_3$ in the switching graph is $|\ker M(\cR^{\textnormal{f}},\cC^{\textnormal{f}},\cS^{\textnormal{f}},\cI^{\textnormal{s}})|$. For a vertex  $(L_3,(\cR^{\textnormal{s}},\cC^{\textnormal{s}},\cS^{\textnormal{s}}),(\mathfrak{R}^{\textnormal{p}},\mathfrak{C}^{\textnormal{p}},\mathfrak{S}^{\textnormal{p}}),\cY)\in C_3$ in the same connected component $C$, we know its degree to $C_2$ is at most \[2^{3n-|\cR^{\textnormal{s}}|-|\cC^{\textnormal{s}}|-|\cS^{\textnormal{s}}|-3}\cdot|\ker M(\cR^{\textnormal{f}},\cC^{\textnormal{f}},\cS^{\textnormal{f}},\cI^{\textnormal{s}})|.\]
This is because for each $(L_3,(\cR^{\textnormal{s}},\cC^{\textnormal{s}},\cS^{\textnormal{s}}),(\mathfrak{R}^{\textnormal{p}},\mathfrak{C}^{\textnormal{p}},\mathfrak{S}^{\textnormal{p}}),\cY)\in C_3$, we have $2^{3n-|\cR^{\textnormal{s}}|-|\cC^{\textnormal{s}}|-|\cS^{\textnormal{s}}|-3}$ possibilities for the parities of the non-stubborn rows, columns, and symbols in its neighbour $L_2$ such that $N_{\mathrm{row}}(L_2)=|\vec{r}|_1\pmod 2$, $N_{\mathrm{col}}(L_2)=|\vec{c}|_1\pmod 2$ and $N_{\mathrm{sym}}(L_2)=|\vec{s}|_1\pmod 2$ (recall that by \cref{remark:matrix_interpretation}, intercalate switches cannot alter the parity of the number of odd rows, the number of odd columns, or the number of odd symbols). And for each such possible choice, there are at most $|\ker M(\cR^{\textnormal{f}},\cC^{\textnormal{f}},\cS^{\textnormal{f}},\cI^{\textnormal{s}})|$ many ways to switch backwards from $L_3$ to achieve these parities. 
Hence, \[\frac{|C_{3}|}{|C_{2}|}\geq\frac{1}{2^{3n-|\cR^{\textnormal{s}}|-|\cC^{\textnormal{s}}|-|\cS^{\textnormal{s}}|-3}}\geq \frac{1}{2^{3n}}.\] As this holds for each connected component $C$, we also get 
\[\frac{|\cA_3|}{|\cA_2|}\geq\frac{1}{2^{3n}}.\]
Altogether, we get \[\frac{|\cA_4|}{|\cA_1|}=\frac{|\cA_2|}{|\cA_1|}\cdot \frac{|\cA_3|}{|\cA_2|}\cdot \frac{|\cA_4|}{|\cA_3|}\geq \frac{1}{n^{O(\log^{11} n)}} \cdot \frac{1}{2^{3n}} \cdot \frac{1}{n^{O(\log^{11}n)}}=2^{-3n+o(n)}, \] as desired.
\end{proof}

\section{Proof of the main technical lemma}
\label{sec:technical_lemma}
In this section, we give the proof of \cref{lemma:T-disjoint_chains_rows_cols_symbols}. This involves an application of \cref{lem:few_stubborn_rows_cols_symbols} to find a robust parity generator, as well as another helper lemma used to guarantee the existence of a repairing set of intercalates. We first need to define row, column, and symbol \emph{repair systems} which specify a set of candidate mates, together with a set of candidate repairing intercalates, for a given row, column, or symbol. We will need many candidate mates for each stubborn row/column/symbol, so that we can greedily build a mate system in which all the mates, and their accompanying repairing intercalates, are disjoint, and additionally the repairing intercalates are $\bfT$-disjoint.

\begin{definition}[Repair systems]
\label{def:repair_system}
    For a $1\times n$ Latin rectangle $D$, a Latin square $L$, a set of columns $\tilde{\cC}$, and a set of symbols $\tilde{\cS}$, a \emph{$(\tilde{\cC},\tilde{\cS})$-avoiding row $k$-repair-system} $(\cF,\cX,\cI)_D$ consists of 
    \begin{enumerate}
        \item \label{def:repair_system_rows} a set $\cF$ of rows;
        \item \label{def:repair_system_row_sets} for each $r \in \cF$, a set $\cX(r)$ of rows with $0 \leq |\cX(r)|\leq 2$ such that the sets $\mc X(r)$ are disjoint from each other and from $\mc F$;
        \item \label{def:repair_system_cycles} for each $r \in \cF$, a set $\cI(r)= \{I(r,r')\}_{r' \in \cX(r)}$ of at most two disjoint intercalates such that the intercalate $I(r,r')$ for each $r' \in \cX(r)$ intersects $r$ and $r'$ and avoids $\tilde{\cC} \cup \tilde{\cS}$, and such that the permutation $\sigma(D,L[r])$ in $\mr{sw}_{\cI(r)}(L)$ has an even-length cycle of length at most $k$ that avoids $\tilde{\cC}\cup\tilde{\cS}$.
    \end{enumerate}
    A $(\tilde{\cR},\tilde{\cS})$-avoiding column $k$-repair-system and a $(\tilde{\cR},\tilde{\cC})$-avoiding symbol $k$-repair-system are defined similarly.
\end{definition}

The aforementioned helper lemma states that such repair systems exist in most Latin squares for all choices of sets $\tilde{\cR},\tilde{\cC},\tilde{\cS}$, provided that these sets are not too large. 

\begin{lemma}
\label{lem:chains}
For a uniformly random Latin square $\bfL\sim \unif(\cL_n)$, the following holds with probability at least $ 1 - \exp(-\omega(n^2/\log^{2000}{n}))$. For every choice of $1\times n$ Latin rectangle $D$, set $\tilde{\cC}$ of forbidden columns, and set $\tilde{\cS}$ of forbidden symbols with $|\tilde{\cC}|,|\tilde{\cS}|\leq \log^{12}n$, there is a $(\tilde{\cC},\tilde{\cS})$-avoiding row $n/\log^{100}{n}$-repair-system $(\cF, \cX, \cI)_D$ with $|\cF| = n/\log^{1000}{n}$.

\end{lemma}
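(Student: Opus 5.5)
The plan is a union bound over the data $(D,\tilde\cC,\tilde\cS)$, combined with a conditioning argument that reduces to the approximation machinery of \cref{lem:LS-to-hypergraphs}. There are at most $n!\cdot n^{2\log^{12}n}=\exp(O(n\log n))$ choices of $(D,\tilde\cC,\tilde\cS)$. Since this is negligible against the target $\exp(-\omega(n^2/\log^{2000}n))$, it suffices to fix $D,\tilde\cC,\tilde\cS$ and prove that the desired repair system fails to exist with probability $\exp(-\omega(n^2/\log^{2000}n))$. Fix a set $\cV$ of $n/2$ rows. Candidate rows $r\in\cF$ will come from $\cV$, and the partner rows $r'\in\cX(r)$ from $[n]\setminus\cV$. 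We condition on the rectangle $W=\bfL[\cV]$ and prove the bound uniformly over all $W\in\cL_n^{\cV}$. Once $W$ is revealed, every permutation $\sigma(D,L[r])$ with $r\in\cV$ is determined.

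The combinatorial input is a deterministic fact about a single permutation $\sigma=\sigma(D,W[r])$, read via \cref{fact:intercalate-switching}. A switch of an intercalate on row $r$ and a row $r'\notin\cV$, using columns $c_1,c_2$, merges or splits cycles of $\sigma$. Let $k=n/\log^{100}n$. I claim there are always $\Omega(n)$ pairs $\{c_1,c_2\}$, avoiding $\tilde\cC\cup\tilde\cS$ and arbitrary $O(\log^{12}n)$ further forbidden columns, for which one such switch creates an even cycle of length at most $k$ avoiding $\tilde\cC\cup\tilde\cS$. The cases are as follows.
\begin{itemize}
\item If $\sigma$ has a cycle of length at least $3$, choose $c_1,c_2$ on it at cycle-distance even and at most $k$ (for a long cycle), or use the fact that splitting an odd cycle leaves one even part. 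In both cases the excised even piece is short.
\item If $\sigma$ has many fixed points, choose $c_1,c_2$ to be two of them; merging them gives a $2$-cycle.
\item If $\sigma$ already has a short even cycle avoiding the forbidden sets, take $\cX(r)=\emptyset$.
\end{itemize}
Since at most $O(\log^{12}n)$ columns or symbols are forbidden and cycles containing them can be ignored, at least one of these cases supplies $\Omega(n)$ (in fact usually $\Omega(nk)$) good column pairs. The second intercalate allowed in \cref{def:repair_system} serves as a fallback, for example merging two fixed points and then excising, if a degenerate case needs it. Call such a pair $\{c_1,c_2\}$ with symbols $s_1,s_2$ \emph{good for $r$}. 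An intercalate realising it exists exactly when some row $r'\notin\cV$ contains the two entries $(r',c_1,s_2)$ and $(r',c_2,s_1)$ of the completion.

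Now pass to the random completion $\bfL\sim\unif(\cL_n^{\overline\cV}(W))$. Let $X$ be the maximum number of rows $r\in\cV$ for which we can pick \emph{pairwise disjoint} realised good configurations, meaning distinct partners $r'$ and disjoint columns and symbols. The event $X<n/\log^{1000}n$ (together with some slack) is monotone decreasing in the hyperedge set. It is also $(1,m)$-inherited, because a subset of a bad $L$ is bad. So part~(\ref{LS-to-hyper-decreasing}) of \cref{lem:LS-to-hypergraphs} applies with $\alpha$ a small constant and $p=\alpha/(n-w)$, and it suffices to show $\Pr[X(\bfG^*)<n/\log^{1000}n]\le\exp(-\omega(n^2/\log^{2000}n))$ in the binomial model. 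The expectation is easy. Each good pair is realised in a given $r'$ with probability $\Theta(1/n^2)$, and it survives conflict deletion with probability $\Omega(1)$. Summing over $\Omega(n)$ good pairs and $n/2$ rows $r'$ gives that each $r$ has $\Omega(1)$ expected realised configurations, so $\mathbb E X=\Omega(n)$ after a greedy disjointness argument. For concentration, build $\cF$ greedily, processing rows $r\in\cV$ one at a time and discarding the $O(|\cF|)$ rows, columns and symbols already used. Then apply \cref{freedman_inequality} to a Lipschitz proxy, namely the number of $r$ having a realised configuration avoiding a fixed reserved set, where each hyperedge affects at most $O(1)$ rows $r$.

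The main obstacle is getting failure probability $\exp(-\omega(n^2/\log^{2000}n))$ rather than $\exp(-\Omega(n))$. A plain count whose mean is $\Theta(n)$ only concentrates at scale $e^{-\Theta(n)}$, and this does not even beat the $\exp(O(n\log^2n))$ loss from \cref{lem:LS-to-hypergraphs}. To fix this I would exploit the large number of good pairs: in the long-cycle case there are $\Theta(nk)$ of them per $r$, giving expectation $\Theta(nk)$ per $r$ before disjointness. I would then run Freedman on the total count $\sum_{r}(\text{number of realised configurations for }r)$, whose mean is $\Theta(n^2/\log^{100}n)$ and whose variance proxy $\sum_i p_iK_i^2$ is of the same order. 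This yields deviation probability $\exp(-\Omega(n^2/\log^{200}n))$. The remaining task is to show that a large total count forces $n/\log^{1000}n$ rows with disjoint configurations. This holds because any single partner row or column absorbs few configurations, so the many configurations cannot all be concentrated on a small set of rows. The slack between $\log^{200}$ and $\log^{2000}$ absorbs the degenerate (fixed-point-heavy) cases, which will need a separate, cruder counting bound.
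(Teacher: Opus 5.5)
Your framework matches the paper's: a union bound over $D,\tilde\cC,\tilde\cS$, conditioning on a block of rows, and transfer to $\bfG^*$ via the first part of \cref{lem:LS-to-hypergraphs}, which applies because the bad event is decreasing and $(1,m)$-inherited. However, the concentration step you propose fails. \cref{freedman_inequality} uses \emph{worst-case} Lipschitz constants. For the total count $Y$ of realised configurations, $K_i$ is at least the number of potential configurations containing hyperedge $i$. Indeed, a hyperedge $(r',c,s)$ is half of a potential configuration for every $r\in\cV$, with other half $(r',W[r]^{-1}(s),W[r][c])$. These partners pairwise share only the vertex $r'$, so they can all lie in $\bfG^*$ at once. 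Every configuration uses two hyperedges, so Cauchy--Schwarz gives $\sum_i p_iK_i^2\ge p(\sum_i K_i)^2/N\ge 4\mu^2/(p^3N)=\Omega(\mu^2)$. Here $\mu=\E Y$, $p=\Theta(1/n)$, and $N=\Theta(n^3)$ is the number of potential hyperedges. So Freedman gives only $\exp(-O(1))$, not $\exp(-\Omega(n^2/\log^{200}n))$. In your long-cycle case a hyperedge lies in $\Theta(k)$ configurations on average, not $O(1)$. For the same reason your ``Lipschitz proxy'' is not $O(1)$-Lipschitz: one hyperedge can complete configurations for $\Theta(n)$ rows $r$. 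Even a concentrated total count would not yield $n/\log^{1000}n$ rows with distinct partners. Configurations for a fixed $r$ may reuse the entries of row $r$ across many partners, so $Y$ can be carried by few rows. The claim that ``a single partner row absorbs few configurations'' is probabilistic and would need its own tail bound at the same scale.

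What you need is an $O(1)$-Lipschitz statistic with mean $\omega(n^2/\log^{1000}n)$. The paper uses the size $\bfZ$ of a maximum family of pairwise disjoint good intercalates. It changes by at most $3$ when one hyperedge of $\bfG$ is toggled, so the variance proxy is $O(n^2)$. \cref{lem:many-intercalates} applies this to every pair of $h$-sets $\cR_0'\subseteq\cR_0$, $\cR_1'\subseteq\cR_1$. The mean is $\Omega(h^2d^2/n^2)$ and the exponent is $h^4d^4/n^6=n^2/\log^{1660}n$, which beats both the $2^{2w}$ union bound and the $\exp(O(n\log^2n))$ transfer loss. A maximal family of disjoint pairs then leaves at most $h$ rows unmatched. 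Alternatively, with an entry-disjoint family you could extract a matching of size $\ge 2\bfZ/n$ by K\"onig's theorem, since each row lies in at most $n/2$ of its intercalates.

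Either way you need on the order of $nk$ good column pairs for most rows, and your case list does not supply this. If $\sigma(D,W[r])$ is a product of $3$-cycles (an adversarial $W$ allows this for every $r\in\cV$), splitting gives only $n$ good pairs and there are no fixed points to merge. Such rows contribute $O(1)$ expected configurations each, and a mean of order $n$ cannot beat the transfer loss, whatever slack you leave in the exponent. The missing idea is to merge two \emph{distinct} short odd cycles into a short even cycle. This is the paper's short-cycle case, with $A^r_1,B^r_1$ unions of such cycles, and it gives $\Theta(n^2)$ good pairs.

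Finally, with $|\cV|=n/2$ the conditioned rectangle can itself block configurations. A configuration needs $s_2$ absent from column $c_1$ of $W$ and $s_1$ absent from column $c_2$. The paper keeps the block small, $w\le d/3$, precisely to control this.
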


We now show how to deduce our main technical lemma (\cref{lemma:T-disjoint_chains_rows_cols_symbols}) from \cref{lem:chains}.
The proof is algorithmic: we go through the stubborn rows one-by-one, greedily selecting a mate for each, while taking care to avoid the already-selected mates (with their intercalates and cycles) and the entries of $\mbf T$. For this to be possible, we first need to use the randomness of $\mbf T$ to establish that typically each stubborn row/column/symbol has many candidate mates whose associated intercalates avoid $\mbf T$ (even when we forbid small sets of rows, columns and symbols $\cR^{\textnormal{a}},\cC^{\textnormal{a}},\cS^{\textnormal{a}}$).

\begin{proof}[Proof of \cref{lemma:T-disjoint_chains_rows_cols_symbols}]
\medskip\noindent\textit{Set-up.}
For $\bfL \sim \unif(\cL_n)$, the conclusion of  \cref{lem:chains} holds with probability at least $1 - \exp(-\omega(n\log^2{n}))$. That is, for every choice of a $1\times n$ Latin rectangle~$D$ and sets $\cC^{\textnormal{a}}$ and $\cS^{\textnormal{a}}$ of forbidden columns and symbols with $|\cC^{\textnormal{a}}|,|\cS^{\textnormal{a}}|\leq \log^{12}n$, there exists a $(\cC^{\textnormal{a}},\cS^{\textnormal{a}})$-avoiding row $n/\log^{100}{n}$-repair-system $(\cF, \cX, \cI)_D$ with $|\cF| = n/\log^{1000}{n}$. By the symmetry between rows, columns, and symbols, analogous statements with columns or symbols playing the special role of rows hold as well with the same probability. Let $\cL_n^{\textnormal{cycles}}$ be the set of Latin squares for which these three statements hold. 

\medskip\noindent\textit{Repair systems with $\bfT$-disjoint intercalates.}
We would now like to show that $\bfT$ is with high probability such that for each row, column, or symbol~$z$, many of the members of $\cI$ for $(\cF,\cX,\cI)_{\bfL[z]}$ are in fact disjoint from $\bfT$. We need this to find a mate system with a set of intercalates that is $(\bfT,n/\log^{100}{n})$-repairing. However, we would like the mates we choose to avoid the stubborn rows, columns, and symbols, as well as some additional rows, columns, and symbols, and these depend on the outcome of $\bfT$. Thus, we will need to take the union bound over the choice of avoided rows, columns, and symbols.

\begin{claim}
\label{claim:many-T-disjoint-chains}
    For each $L \in \cL_n^{\textnormal{cycles}}$, let $\bfT \subseteq [n]^2$ be a random subset of $[n] \times [n]$, where each is included with probability $\varepsilon = o(\log^{-1}{n})$ independently. Then with probability $1 - \exp(-\omega(n\log^2{n}))$, $\bfT$ is such that for every choice $\cR^{\textnormal{a}},\cC^{\textnormal{a}},\cS^{\textnormal{a}}$ of at most $\log^{12}{n}$ many rows, columns, and symbols, the following holds.
    \begin{enumerate}
        \item \label{claim:rows} For each row $r$, there is a $(\cC^{\textnormal{a}},\cS^{\textnormal{a}})$-avoiding row $n/\log^{100}{n}$-repair-system \[\cJ(r):= (\cF, \cX, \cI)_{L[r]}\] with $|\cF| = n/(2\log^{1000}{n})$ such that all intercalates in $\cI$ are $\bfT$-disjoint.
        \item \label{claim:columns} For each column $c$, there is a $(\cR^{\textnormal{a}},\cS^{\textnormal{a}})$-avoiding column $n/\log^{100}{n}$-repair-system \[\cJ(c):= (\cF, \cX, \cI)_{L[c]}\] with $|\cF| = n/(2\log^{1000}{n})$ such that all intercalates in $\cI$ are $\bfT$-disjoint.
        \item \label{claim:symbols} For each symbol $s$, there is a $(\cR^{\textnormal{a}},\cC^{\textnormal{a}})$-avoiding symbol $n/\log^{100}{n}$-repair-system \[\cJ(s):= (\cF, \cX, \cI)_{L[s]}\] with $|\cF| = n/(2\log^{1000}{n})$ such that all intercalates in $\cI$ are $\bfT$-disjoint.
    \end{enumerate}
\end{claim}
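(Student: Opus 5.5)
The plan is to fix $L\in\cL_n^{\textnormal{cycles}}$ and take a union bound over all choices of the object $z$ (a row, column or symbol) and of the sets $\cR^{\textnormal{a}},\cC^{\textnormal{a}},\cS^{\textnormal{a}}$. There are at most $3n\cdot n^{3\log^{12}n}=\exp(O(\log^{13}n))$ such choices, which is negligible against the target $\exp(-\omega(n\log^2 n))$. So it suffices to show that for fixed $z$ and fixed forbidden sets, say $z=r$ a row with forbidden $(\cC^{\textnormal{a}},\cS^{\textnormal{a}})$ (columns and symbols are symmetric), the desired $\bfT$-disjoint repair system fails to exist with probability $\exp(-\omega(n\log^2n))$.

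\textbf{Step 1 (one system).} By the definition of $\cL_n^{\textnormal{cycles}}$ there is a $(\cC^{\textnormal{a}},\cS^{\textnormal{a}})$-avoiding row $n/\log^{100}n$-repair-system $(\cF,\cX,\cI)_{L[r]}$ with $|\cF|=n/\log^{1000}n$. For each $r'\in\cF$ let $G(r')$ be the set of at most $8$ entries of the intercalates in $\cI(r')$. These groups are pairwise disjoint, since $\cF$ and the sets $\cX(\cdot)$ are pairwise disjoint and every entry of $I(r',r'')$ lies in row $r'$ or row $r''$. Restricting $\cF$ to those $r'$ with $G(r')\cap\bfT=\emptyset$ still gives a valid repair system, because conditions (\ref{def:repair_system_rows})--(\ref{def:repair_system_cycles}) are inherited by subsets. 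Hence failure forces $\bfT$ to hit at least $|\cF|/2$ disjoint groups. Each group is hit independently with probability at most $8\varepsilon$, so this has probability at most $2^{|\cF|}(8\varepsilon)^{|\cF|/2}=\exp(-\Omega(|\cF|\log(1/\varepsilon)))$. On its own this is far too weak, since $|\cF|\log(1/\varepsilon)\ll n\log^2 n$.

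\textbf{Step 2 (amplification; the main obstacle).} The fix I would try is to use many repair systems for the same $D=L[r]$ whose entry sets overlap only boundedly. Failure then requires $\bfT$ to hit half the groups of every system simultaneously. To build them, I would invoke the conclusion of \cref{lem:chains} repeatedly, each time enlarging the forbidden sets by up to $\log^{12}n-|\cC^{\textnormal{a}}|$ of the columns and symbols that are most heavily used by the systems chosen so far. This produces systems $\cJ_1,\dots,\cJ_m$ such that every entry lies in groups of at most $\Delta$ of them.

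Let $U$ be the union of all groups. If all $m$ systems fail, then $|\bfT\cap U|\ge m|\cF|/(2\Delta)$, while $\E|\bfT\cap U|\le 8\varepsilon m|\cF|$. A Chernoff bound (or \cref{freedman_inequality}) then gives failure probability $\exp\big(-\Omega\big((m|\cF|/\Delta)\log(1/(\varepsilon\Delta))\big)\big)$. This is $\exp(-\omega(n\log^2n))$ provided $m/\Delta\ge\log^{1002}n$.

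The crux is therefore to show that the heavy-use pruning controls $\Delta$ well enough. The difficulty is that the forbidden sets only allow $\log^{12}n$ columns and symbols to be excluded, while a single system touches about $n/\log^{1000}n$ of them. So I would need to argue that a typical entry is reused by only a few systems, perhaps by recording only entries in rows outside $r$'s system and exploiting the freedom in choosing which rows form $\cF$. I expect this to be the hardest part, and would attempt it first.

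\textbf{Step 3.} With Step 2 in hand, take the union bound over the $\exp(O(\log^{13}n))$ choices of $z$ and forbidden sets, and apply the symmetric argument for columns and symbols. This yields all three parts of the claim with probability $1-\exp(-\omega(n\log^2n))$.
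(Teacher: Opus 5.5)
Your Step~1, together with the union bound in Step~3, is exactly the paper's entire proof of this claim. The paper takes the system $(\cF',\cX',\cI')_{L[r]}$ supplied by $L\in\cL_n^{\textnormal{cycles}}$. It notes that the groups of at most $8$ entries attached to distinct $r'\in\cF'$ are disjoint, since the rows of $\cF'\cup\bigcup_{r'}\cX'(r')$ are distinct. A Chernoff bound shows that more than half the groups avoid $\bfT$ except with probability $\exp(-\Omega(n/\log^{1000}n))$. It then restricts $\cF'$ to the survivors and union bounds over the $\exp(O(\log^{13}n))$ choices of $z,\cR^{\textnormal{a}},\cC^{\textnormal{a}},\cS^{\textnormal{a}}$, and declares the claim proved. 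Your arithmetic is right. This argument only gives failure probability $\exp(-\Omega(n\log(1/\varepsilon)/\log^{1000}n))$, which for any $\varepsilon\ge n^{-O(1)}$ is nowhere near $\exp(-\omega(n\log^2 n))$. So the paper's own argument proves the claim only with this weaker bound.

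Your Step~2, meant to bridge this, is a genuine gap, and the obstruction you flagged is fatal rather than technical. \cref{lem:chains} is a bare existence statement. Its forbidden sets contain at most $\log^{12}n$ columns and symbols and no rows, whereas a single system involves $n/\log^{1000}n$ rows, each with its own intercalates and short cycle. So the system returned for your enlarged forbidden sets may coincide with $\cJ_1$ on every row whose intercalates and cycle miss the newly forbidden columns and symbols. If no column or symbol meets more than an $o(\log^{-12}n)$-fraction of these rows, that is all but a vanishing fraction of $\cJ_1$. Membership in $\cL_n^{\textnormal{cycles}}$ gives you no other handle, short of strengthening \cref{lem:chains} itself. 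Hence heavy-use pruning yields no bound better than $\Delta\le m$, and $m/\Delta\ge\log^{1002}n$ is out of reach.

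The fix is to keep Step~1 and settle for failure probability $\exp(-\Omega(n/\log^{1000}n))$, which is all that is ever used. Through \cref{fact:joint_prob_vs_most_outcomes}(\ref{fact:separate_to_joint}) it gives \cref{lemma:T-disjoint_chains_rows_cols_symbols} with the same failure probability. The proof of \cref{thm:main} then only needs a fixed $T$ for which $(T,\bfL)$ is good with probability $1-o(1)$, so that $\Pr[\bfL\in\cA_1]\ge 1/4-o(1)$. Bounds like $\exp(-\omega(n\log^2 n))$ are essential only for events pushed through the approximation lemma (\cref{lem:random_LS_to_triangle_removal}), as in \cref{lem:chains} and \cref{lemma:existence_split_stable_intercalate}. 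The randomness of $\bfT$ in this claim never goes through that lemma.
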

\begin{claimproof}
We will show (\cref{claim:rows}), and the other two parts will follow by symmetry between the rows, columns, and symbols.

For (\cref{claim:rows}), we will take a union bound over the $\binom{n}{\log^{12}{n}}^3$ choices of $\cR^{\textnormal{a}}, \cC^{\textnormal{a}}, \cS^{\textnormal{a}}$, so fix one such choice. Since $L \in \cL_n^{\textnormal{cycles}}$, for each row $r$, 
we have a $(\cC^{\textnormal{a}},\cS^{\textnormal{a}})$-avoiding row $n/\log^{100}{n}$-repair-system $\cJ'(r)= (\cF', \cX', \cI')_{L[r]}$ with $|\cF'| = n/\log^{1000}{n}$. For each row $r_2 \in \cF'$, there are at most two associated intercalates in $\cI'(r_2)$, and the probability that all of their entries are disjoint from $\bfT$ is at least $(1 - o(1/\log{n}))^8 =1 - o(1/\log{n})$. Note that all rows in $\cF' \cup \bigcup_{r_2\in\cF'}\cX(r_2)$ are disjoint, so by the Chernoff bound the probability that fewer than half of the rows in $\cF'$ have $\bfT$-disjoint intercalates is $\exp(-\Omega(n/\log^{1000}{n}))$. A union bound over all the rows $r$ in $L$, as well as over the choices of $\cR^{\textnormal{a}}, \cC^{\textnormal{a}}, \cS^{\textnormal{a}}$, concludes the proof.
\end{claimproof}

\medskip\noindent\textit{Most outcomes of $L$ and $T$ are suitable.} Recall that by \cref{lem:chains} we have $\Pr[\bfL\in \cL_n^{\textnormal{cycles}}]\geq 1 - \exp(-\omega(n\log^2{n}))$. Thus, by \cref{claim:many-T-disjoint-chains} and \cref{fact:joint_prob_vs_most_outcomes}(\ref{fact:separate_to_joint}), with probability at least $1 - \exp(-\omega(n\log^2{n}))$, $\bfL$ and $\bfT$ are such that for every choice of $\cR^{\textnormal{a}}, \cC^{\textnormal{a}}, \cS^{\textnormal{a}}$, parts (\ref{claim:rows}), (\ref{claim:columns}), and (\ref{claim:symbols}) of \cref{claim:many-T-disjoint-chains} hold. Furthermore, by \cref{lem:few_stubborn_rows_cols_symbols}, with probability $1 - \exp(-\omega(n\log^2{n}))$, $\bfL$ and $\bfT$ are such that there are sets of stubborn rows $\cR^{\textnormal{s}}$, stubborn columns $\cC^{\textnormal{s}}$, and stubborn symbols $\cS^{\textnormal{s}}$ of size at most $\log^{11}{n}$ each, for which the set $\cI^{\textnormal{stable}}$ of $\cR^{\textnormal{s}}\cup \cC^{\textnormal{s}} \cup \cS^{\textnormal{s}}$-avoiding stable intercalates in $\bfT \cap \bfL$ is a $n/(100\log^{11}{n})$-robust parity generator with respect to $\bfT \cap \bfL$, $\cR^{\textnormal{f}}, \cC^{\textnormal{f}}, \cS^{\textnormal{f}}$, where $\cR^{\textnormal{f}} := [n] \setminus \cR^{\textnormal{s}}, \cC^{\textnormal{f}} := [n] \setminus \cC^{\textnormal{s}}, \cS^{\textnormal{f}} := [n] \setminus \cS^{\textnormal{s}}$. From now on, we assume that $L$ and $T$ are outcomes of $\bfL$ and $\bfT$ such that both of these conclusions hold, which happens with probability at least $1 - \exp(-\omega(n\log^2{n}))$.

The rest of the proof is deterministic. We first construct the mate system $(\mathfrak{R}^{\textnormal{p}},\mathfrak{C}^{\textnormal{p}},\mathfrak{S}^{\textnormal{p}})$ for $\cR^{\textnormal{s}}, \cC^{\textnormal{s}}, \cS^{\textnormal{s}}$, along with a set $\cI(\mathfrak{R}^{\textnormal{p}},\mathfrak{C}^{\textnormal{p}},\mathfrak{S}^{\textnormal{p}})$ of $(T, n/\log^{100}{n})$-repairing intercalates. We pick a mate for each $r \in \cR^{\textnormal{s}}$ one-by-one, then a mate for each $c \in \cC^{\textnormal{s}}$, and finally a mate for each $s \in \cS^{\textnormal{s}}$.

\medskip\noindent\textit{Picking mates for the stubborn rows.}
Suppose we have already chosen a mate $r_2$ for each $r_1 \in \tilde{\cR}^{\textnormal{s}} \subset \cR^{\textnormal{s}}$ along with associated $T$-disjoint intercalates $\cI(r_1,r_2)$ that will be part of the $(T,n/\log^{100}{n})$-repairing set $\cI(\mathfrak{R}^{\textnormal{p}},\mathfrak{C}^{\textnormal{p}},\mathfrak{S}^{\textnormal{p}})$. Let $r \in \cR^{\textnormal{s}} \setminus \tilde{\cR}^{\textnormal{s}}$. By part (\ref{claim:rows}) of \cref{claim:many-T-disjoint-chains}, there is a $(\cC^{\textnormal{s}},\cS^{\textnormal{s}})$-avoiding row $n/\log^{100}{n}$-repair-system $\cJ(r)= (\cF, \cX, \cI)_{L[r]}$ with $|\cF| = n/(2\log^{1000}{n})$ such that all intercalates in $\cI$ are $T$-disjoint. Since $|\tilde{\cR^{\textnormal{s}}}| \leq |\cR^{\textnormal{s}}|\leq \log^{11}{n}$, there is a set $\tilde{\cR}^{\textnormal{a}}$ of at most $4 \log^{11}{n}$ rows that include $\cR^{\textnormal{s}}$, the mate $r_2$ of each row $r_1 \in \tilde{\cR}^{\textnormal{s}}$ and the rows intersecting the intercalates $\cI(r_1,r_2)$. Thus, we can find a mate $r' \in \cF$ for $r$ such that $r'$ and its associated rows $\cX(r')$ are disjoint from all rows in $\tilde{\cR}^{\textnormal{a}}$. We then take the associated intercalates $\cI(r')$ of $r'$ in $\cJ(r)$ as the set $\cI(r,r') \subset \cI(\mathfrak{R}^{\textnormal{p}},\mathfrak{C}^{\textnormal{p}},\mathfrak{S}^{\textnormal{p}})$.

\medskip\noindent\textit{Picking mates for the stubborn columns.}
Now suppose we have already chosen $\mathfrak{R}^{\textnormal{p}}$ and all associated intercalates $\cI(r_1,r_2) \subset \cI(\mathfrak{R}^{\textnormal{p}},\mathfrak{C}^{\textnormal{p}},\mathfrak{S}^{\textnormal{p}})$ for $(r_1,r_2) \in \mathfrak{R}^{\textnormal{p}}$. Suppose furthermore we have chosen a mate $c_2$ for each $c_1 \in \tilde{\cC}^{\textnormal{s}} \subset \cC^{\textnormal{s}}$ along with the associated intercalates $\cI(c_1, c_2) \subset \cI(\mathfrak{R}^{\textnormal{p}},\mathfrak{C}^{\textnormal{p}},\mathfrak{S}^{\textnormal{p}})$. Let $c \in \cC^{\textnormal{s}} \setminus \tilde{\cC}^{\textnormal{s}}$. Let $\cR^{\textnormal{a}}:= \cR^{\textnormal{s}} \cup \cR^{\textnormal{m}} \cup \cR^{\textnormal{i}}$, where $\cR^{\textnormal{i}}$ is the set of all rows intersecting an intercalate in $\cI(r_1,r_2)$ for $(r_1,r_2) \in \mathfrak{R}^{\textnormal{p}}$. Additionally, let $\cC^{\textnormal{a}} $ be the set of all columns intersecting an intercalate in $\cI(r_1,r_2)$ for $(r_1,r_2) \in \mathfrak{R}^{\textnormal{p}}$ and note that $|\cC^{\textnormal{a}}|\leq 4|\cR^{\textnormal{s}}| \leq 4\log^{11}{n}$. By part (\ref{claim:columns}) of \cref{claim:many-T-disjoint-chains}, there is a $(\cR^{\textnormal{a}},\cS^{\textnormal{s}})$-avoiding column $n/\log^{100}{n}$-repair-system $\cJ(c)= (\cF, \cX, \cI)_{L[c]}$ with $|\cF| = n/(2\log^{1000}{n})$ such that all intercalates in $\cI$ are $T$-disjoint. Since $|\tilde{\cC^{\textnormal{s}}}|\leq |\cC^{\textnormal{s}}| \leq \log^{11}{n}$, there is a set $\tilde{\cC}^{\textnormal{a}}$ of at most $4 \log^{11}{n}$ columns that include $\cC^{\textnormal{s}}$, the mate $c_2$ of each column $c_1\in \tilde{\cC}^{\textnormal{s}}$, and the columns intersecting the intercalates $\cI(c_1,c_2)$. Thus, we can find a mate $c' \in \cF$ for $c$ such that $c'$ and its associated columns $\cX(c')$ are disjoint from all columns in $\tilde{\cC}^{\textnormal{a}} \cup \cC^{\textnormal{a}}$. We then take the associated intercalates $\cI(c')$ of $c'$ in $\cJ(c)$ as the set $\cI(c,c') \subset \cI(\mathfrak{R}^{\textnormal{p}},\mathfrak{C}^{\textnormal{p}},\mathfrak{S}^{\textnormal{p}})$.

\medskip\noindent\textit{Picking mates for the stubborn symbols.}
Finally, suppose we have already chosen $\mathfrak{R}^{\textnormal{p}}$ and $\mathfrak{C}^{\textnormal{p}}$ and all associated intercalates $\cI(z_1,z_2) \subset \cI(\mathfrak{R}^{\textnormal{p}},\mathfrak{C}^{\textnormal{p}},\mathfrak{S}^{\textnormal{p}})$ for $(z_1,z_2)\subseteq \mathfrak{R}^{\textnormal{p}} \cup \mathfrak{C}^{\textnormal{p}}$. Suppose furthermore we have chosen a mate $s_2$ for each $s_1 \in \tilde{\cS}^{\textnormal{s}} \subset \cS^{\textnormal{s}}$ along with the associated intercalates $\cI(s_1, s_2)\subset \cI(\mathfrak{R}^{\textnormal{p}},\mathfrak{C}^{\textnormal{p}},\mathfrak{S}^{\textnormal{p}})$. Let $s \in \cS^{\textnormal{s}} \setminus \tilde{\cS}^{\textnormal{s}}$. Let $\cR^{\textnormal{a}}:= \cR^{\textnormal{s}} \cup \cR^{\textnormal{m}} \cup \cR^{\textnormal{i}}$ as before and $\cC^{\textnormal{a}} := \cC^{\textnormal{s}} \cup \cC^{\textnormal{m}} \cup \cC^{\textnormal{i}}$ where $\cC^{\textnormal{i}}$ is the set of all columns intersecting an intercalate in $\cI(c_1,c_2)$ for $(c_1,c_2) \in \mathfrak{C}^{\textnormal{p}}$. Additionally, let $\cS^{\textnormal{a}}$ be the set of all symbols intersecting an intercalate in $\cI(r_1,r_2)$ for $(r_1,r_2) \in \mathfrak{R}^{\textnormal{p}}$ or in $\cI(c_1,c_2)$ for $(c_1,c_2) \in \mathfrak{C}^{\textnormal{p}}$ and note that $|\cS^{\textnormal{a}}|\leq 4 (|\cR^{\textnormal{s}}| + |\cC^{\textnormal{s}}|) \leq 8\log^{11}{n}$. By part (\ref{claim:symbols}) of \cref{claim:many-T-disjoint-chains}, there is a $(\cR^{\textnormal{a}},\cC^{\textnormal{a}})$-avoiding symbol $n/\log^{100}{n}$-repair-system $\cJ(s)= (\cF, \cX, \cI)_{L[s]}$ with $|\cF| = n/(2\log^{1000}{n})$ such that all intercalates in $\cI$ are $T$-disjoint. Since $|\tilde{\cS^{\textnormal{s}}}|\leq |\cS^{\textnormal{s}}| \leq \log^{11}{n}$, there is a set $\tilde{\cS}^{\textnormal{a}}$ of at most $4 \log^{11}{n}$ symbols that include $\cS^{\textnormal{s}}$, the mate of each symbol $s_1 \in \tilde{\cS}^{\textnormal{s}}$, and the symbols intersecting the intercalates $\cI(s_1,s_2)$. Thus, we can find a mate $s' \in \cF$ for $s$ such that $s'$ and its associated symbols $\cX(s')$ are disjoint from all symbols in $\tilde{\cS}^{\textnormal{a}} \cup \cS^{\textnormal{a}}$. We then take the associated intercalates $\cI(s')$ of $s'$ in $\cJ(s)$ as the set $\cI(s,s') \subset \cI(\mathfrak{R}^{\textnormal{p}},\mathfrak{C}^{\textnormal{p}},\mathfrak{S}^{\textnormal{p}})$.

\medskip\noindent\textit{The chosen set of intercalates is repairing.}
We will now check that the set $\cI(\mathfrak{R}^{\textnormal{p}}, \mathfrak{C}^{\textnormal{p}}, \mathfrak{S}^{\textnormal{p}})$ is indeed $(T, n/\log^{100}{n})$-repairing for $L$. Firstly, at the end of this process, each row, column, and symbol in $\cR^{\textnormal{s}} \cup \cC^{\textnormal{s}} \cup \cS^{\textnormal{s}}$ has a distinct mate which is disjoint from $\cR^{\textnormal{s}} \cup \cC^{\textnormal{s}} \cup \cS^{\textnormal{s}}$ due to our choice of $\tilde{\cR}^{\textnormal{a}},\tilde{\cC}^{\textnormal{a}},\tilde{\cS}^{\textnormal{a}}$. Thus, $(\mathfrak{R}^{\textnormal{p}}, \mathfrak{C}^{\textnormal{p}}, \mathfrak{S}^{\textnormal{p}})$ is a mate system. Part~(\ref{def:mate_repair_system_ints}) of \cref{def:repairing_intercalates} follows from the fact that we only selected intercalates which were $T$-disjoint since we applied \cref{claim:many-T-disjoint-chains}, and from the fact that each mate $z_2$ of $z_1 \in \cR^{\textnormal{s}}\cup\cC^{\textnormal{s}}\cup\cS^{\textnormal{s}}$ was chosen to be a member of $\cF$ from $\cJ(z_1) = (\cF,\cX,\cI)_{L[z_1]}$, where the intercalates in $\cI(z_2)$ intersect $z_2$ by \cref{def:repair_system}. Part~(\ref{def:mate_repair_system_ints_disjoint}) of \cref{def:repairing_intercalates} follows since each time we chose a new mate with its associated intercalates, the intercalates avoided the rows of $\cI(r_1,r_2)$ for the already chosen $(r_1,r_2) \in \mathfrak{R}^{\textnormal{p}}$, the columns of $\cI(c_1,c_2)$ for the already chosen $(c_1,c_2) \in \mathfrak{C}^{\textnormal{p}}$, and the symbols of $\cI(s_1,s_2)$ for the already chosen $(s_1,s_2) \in \mathfrak{S}^{\textnormal{p}}$. Part~(\ref{def:mate_repair_system_ints_vs_rows_cols_symbs}) of \cref{def:repairing_intercalates} holds since each time we chose a new mate with its associated intercalates, the intercalates avoided $\cR^{\textnormal{s}} \cup \cC^{\textnormal{s}} \cup \cS^{\textnormal{s}}$, as well as the members of $\cR^{\textnormal{m}} \cup \cC^{\textnormal{m}} \cup \cS^{\textnormal{m}}$ that had already been chosen; on the other hand, new members of $ \cR^{\textnormal{m}} \cup \cC^{\textnormal{m}} \cup \cS^{\textnormal{m}}$ were chosen to avoid the rows, columns, and symbols of intercalates that have already been selected in $\cI(z_1, z_2)$ for some $(z_1,z_2) \in \mathfrak{R}^{\textnormal{p}} \cup \mathfrak{C}^{\textnormal{p}} \cup \mathfrak{S}^{\textnormal{p}}$.

For part~(\ref{def:mate_repair_system_ints_create_cycles}) of \cref{def:repairing_intercalates}, first note that due to part~(\ref{def:mate_repair_system_cycles_vs_rows_cols_symbs}), switching an intercalate in $\cI(z_1,z_2)$ can never affect any entry in $\cR^{\textnormal{s}} \cup \cC^{\textnormal{s}} \cup \cS^{\textnormal{s}} \cup \cR^{\textnormal{m}} \cup \cC^{\textnormal{m}} \cup \cS^{\textnormal{m}} \setminus \{z_2\}$. This implies that for each $(z_1,z_2) \in \mathfrak{R}^{\textnormal{p}} \cup \mathfrak{C}^{\textnormal{p}} \cup \mathfrak{S}^{\textnormal{p}}$, the entries in row/column/symbol $z_1$ and row/column/symbol $z_2$ are the same in $\textnormal{sw}_{\cI(\mathfrak{R}^{\textnormal{p}},\mathfrak{C}^{\textnormal{p}},\mathfrak{S}^{\textnormal{p}})}(L)$ and in $\textnormal{sw}_{\cI(z_1,z_2)}(L)$. Thus, by part~(\ref{def:repair_system_cycles}) of \cref{def:repair_system}, the permutation $\sigma_{L'}(z_1,z_2)$ in $L':=\textnormal{sw}_{\cI(\mathfrak{R}^{\textnormal{p}},\mathfrak{C}^{\textnormal{p}},\mathfrak{S}^{\textnormal{p}})}(L)$ contains (at least one) even-length cycle of length at most $n/\log^{100}{n}$. Choose one of these short even-length cycles for each $(z_1,z_2) \in \mathfrak{R}^{\textnormal{p}} \cup \mathfrak{C}^{\textnormal{p}} \cup \mathfrak{S}^{\textnormal{p}}$ to construct the set $\cY$. We will show that $\textnormal{sw}_{\cI(\mathfrak{R}^{\textnormal{p}},\mathfrak{C}^{\textnormal{p}},\mathfrak{S}^{\textnormal{p}})}(L)$ together with $(\mathfrak{R}^{\textnormal{p}},\mathfrak{C}^{\textnormal{p}},\mathfrak{S}^{\textnormal{p}})$ is $(\cR^{\textnormal{s}}, \cC^{\textnormal{s}}, \cS^{\textnormal{s}})$-flexible with cycles $\cY$. Part~(\ref{def:even_short_cycles}) of \cref{def:flexible_LS} follows from the preceding discussion. Part~(\ref{def:disjoint_cycles}) of \cref{def:flexible_LS} holds because all rows, columns, and symbols in $\cR^{\textnormal{s}}, \cC^{\textnormal{s}}, \cS^{\textnormal{s}}, \cR^{\textnormal{m}}, \cC^{\textnormal{m}}, \cS^{\textnormal{m}}$ are distinct, and because all column cycles avoid the rows in $\cR^{\textnormal{s}} \cup \cR^{\textnormal{m}}$, and all symbol cycles avoid the rows in $\cR^{\textnormal{s}} \cup \cR^{\textnormal{m}} $ and the columns in $ \cC^{\textnormal{s}} \cup \cC^{\textnormal{m}}$. Similarly, part~(\ref{def:mate_repair_system_cycles_vs_rows_cols_symbs}) of \cref{def:flexible_LS} holds because the row $n/\log^{100}{n}$-repair-system we chose for each row in $\cR^{\textnormal{s}}$ is $(\cC^{\textnormal{a}},\cS^{\textnormal{a}})$-avoiding with $\cC^{\textnormal{a}} \supseteq \cC^{\textnormal{s}}$ and $\cS^{\textnormal{a}} \supseteq \cS^{\textnormal{s}}$ (and similarly for the column, respectively symbol $n/\log^{100}{n}$-repair-system for each column and symbol).

\medskip\noindent\textit{The appropriate set of stable intercalates is a parity generator.}
Recall that $\cI^{\textnormal{stable}}$ is the set of $\cR^{\textnormal{s}} \cup \cC^{\textnormal{s}} \cup \cS^{\textnormal{s}}$-avoiding stable intercalates in $T \cap L$, and that it is a $n/(100\log^{11}{n})$-robust parity generator with respect to $T \cap L, \cR^{\textnormal{f}}, \cC^{\textnormal{f}}, \cS^{\textnormal{f}}$. Let $E$ consist of all entries in cycles $\cY(z_1,z_2)$ for $(z_1,z_2) \in \mathfrak{R}^{\textnormal{p}} \cup \mathfrak{C}^{\textnormal{p}} \cup \mathfrak{S}^{\textnormal{p}}$, and note that since each cycle has length at most $n/\log^{100}{n}$, we have $|E| \leq 6n/\log^{89}{n}$. Then the set $\cI^{\textnormal{s}}\subseteq \cI^{\textnormal{stable}}$ of stable intercalates in $T \cap L = T \cap \textnormal{sw}_{\cI(\mathfrak{R}^{\textnormal{p}},\mathfrak{C}^{\textnormal{p}},\mathfrak{S}^{\textnormal{p}})}(L)$ that do not intersect $E$  is a parity generator with respect to $T \cap \textnormal{sw}_{\cI(\mathfrak{R}^{\textnormal{p}},\mathfrak{C}^{\textnormal{p}}, \mathfrak{S}^{\textnormal{p}})}(L), \cR^{\textnormal{f}}, \cC^{\textnormal{f}}, \cS^{\textnormal{f}}$, as desired.
\end{proof}

It remains to prove \cref{lem:chains}, which guarantees the existence of repair systems. The crucial ingredient of its proof is the following lemma, which shows that with high probability, a uniformly random completion of a Latin rectangle contains an intercalate in a prescribed set of rows and columns. In particular, the guaranteed intercalate always has one row in the Latin rectangle, and for each such potential row, there are (possibly different) prescribed sets of columns that the intercalate should intersect. This lemma will be helpful for finding the intercalates in the repair system, as we will need to split or merge specific cycles, which calls for fine control over the location of the intercalates we look for.

\begin{lemma}
\label{lem:many-intercalates}
Fix a constant $\zeta \in (0,1)$ and let $w \leq (1 - \zeta)n$ and $\cR_0 = [w]$. Suppose $h, d \in [n]$ are such that $h = o(n)$ and $h^4 d^4 = \omega(n^7\log^2{n})$, as well as $d \geq 3w$.
Let $W\in\cL_n^w$ be a $w\times n$ Latin rectangle. Let $\cR'_0 \subseteq [w]$ and $\cR_1 \subseteq [n]\setminus[w]$ be sets of rows each of size at least $h$. Suppose for each $i \in \cR'_0$, $A_i, B_i\subset [n]$ are sets of columns with $A_i \cap B_i = \varnothing$, each of size at least $d$.  Consider a uniformly random completion $\bfL \sim \unif(\cL_n(W))$ of $W$ to a Latin square. With probability at least $1-\exp(-\Omega(h^4d^4/n^6))$, the following holds. There exist $i \in \cR'_0$ and $j \in \cR_1$ such that there is an intercalate intersecting both rows $i$ and $j$ and one column in each of $A_i$ and $B_i$.
\end{lemma}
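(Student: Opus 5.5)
The plan is to transfer the problem to a binomial random hypergraph via \cref{lem:LS-to-hypergraphs}(\ref{LS-to-hyper-decreasing}) and then prove the statement there. Let $\cT$ be the set of completions in $\cL_n^{\overline{w}}(W)$ with no intercalate of the desired type: rows $i\in\cR'_0$ and $j\in\cR_1$, columns $a\in A_i$, $b\in B_i$. Note that such an intercalate consists of the two $W$-entries $(i,a,s)$ and $(i,b,s')$, which are fixed by $W$, together with the two hyperedges $(j,a,s')$ and $(j,b,s)$ of the completion. So the only random part is a pair of hyperedges in row $j$.

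Let $\cU$ be the property of partial rectangles $P\in\cP^{\overline{w}}_n$ containing no such pair $\{(j,a,s'),(j,b,s)\}$. This property is monotone decreasing. It is also $(1,m)$-inherited from $\cT$, since a subset of an intercalate-free completion is still free of such pairs. Taking $\alpha$ a small constant and $p=\alpha/(n-w)$, \cref{lem:LS-to-hypergraphs}(\ref{LS-to-hyper-decreasing}) gives
\[\Pr[\bfL\in\cT]\le \exp(O(n\log^2 n))\,\Pr[\bfG^*\in\cU],\]
where $\bfG\sim\mb G^{(3)}(W,p)$ and $\bfG^*$ is obtained by deleting hyperedges involved in conflicts. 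It therefore suffices to show
\[\Pr[\bfG^*\in\cU]\le\exp(-\Omega(h^4d^4/n^6)).\]
The hypothesis $h^4d^4=\omega(n^7\log^2n)$ is exactly what makes this beat the $\exp(O(n\log^2 n))$ loss.

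To bound $\Pr[\bfG^*\in\cU]$, let $\bfX$ count the \emph{good pairs}: pairs $\{(j,a,s'),(j,b,s)\}$ as above that both lie in $\bfG$ and are both conflict-free in $\bfG$. Every candidate hyperedge is a triangle in $G^W$, so it does not conflict with $W$.
\begin{itemize}
\item The number of candidates is at least $h\cdot h\cdot d^2\ge h^2d^2$, counting $i$, $j$, $a\in A_i$, $b\in B_i$ (the symbols are then determined).
\item Some candidates are not valid hyperedges of $G^{W,(3)}$, for instance when $s'$ already appears in column $a$ within $W$. Only $O(w)$ of the $b$'s are lost per choice of $(i,a)$, because symbol $s'$ is forbidden in column $a$ for at most $w$ values. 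This is where $d\ge3w$ is used, to keep a constant fraction of the pairs.
\item Each remaining pair is present and isolated with probability at least $p^2(1-p)^{O(n)}=\Omega(n^{-2})$.
\end{itemize}
Hence $\E\bfX=\Omega(h^2d^2/n^2)$.

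For concentration, apply \cref{freedman_inequality} to $\bfX$ with $N=O(n^3)$ variables and $p_i=p$. The issue is the Lipschitz constants. One hyperedge $(j,a,s')$ lies in at most $h$ pairs, one for each $i$, and deleting it can create or destroy isolation for $O(1)$ other hyperedges. That gives $K_i=O(h)$ directly, but this is too crude in the worst case. I would instead pass to a maximal collection of \emph{disjoint} good pairs, or use a Lipschitz-friendly variant of $\bfX$: for example, count good pairs but cap each row $j$'s contribution, or count pairs no two of which share a hyperedge. This makes $K=O(1)$.

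With $K=O(1)$, Freedman gives a lower-tail probability of
\[\exp\!\big(-\Omega(\mu^2/(pN+\mu))\big)=\exp(-\Omega(h^4d^4/n^4\cdot n^{-2}))=\exp(-\Omega(h^4d^4/n^6)),\]
using $pN=O(n^2)$. This is exactly the target exponent, which suggests this is the intended bound.

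The main obstacle is this concentration step. A genuine $O(1)$ Lipschitz bound requires controlling how many good pairs share a hyperedge. An alternative I would try is Janson's inequality for the monotone event of containing a pair in $\bfG$, handling the conflict deletion separately by a union bound. The condition $h=o(n)$ plausibly guarantees the overlap term $\Delta$ is negligible: two pairs sharing the hyperedge $(j,a,s')$ correspond to different $i$'s, and there are at most $h^2$ such choices.
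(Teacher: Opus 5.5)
Your proposal is correct and is essentially the paper's proof: transfer via \cref{lem:LS-to-hypergraphs}(\ref{LS-to-hyper-decreasing}) using the $(1,\alpha n(n-w))$-inherited decreasing property, use $d\ge 3w$ to get $\Omega(h^2d^2/n^2)$ expected good intercalates in $W\cup\bfG^*$, and apply \cref{freedman_inequality} to the size $\bfZ$ of a \emph{maximum} family of disjoint good intercalates, whose bounded-differences constant (at most $3$) comes for free. The step you flag as the obstacle is really the first-moment bound $\E[\bfZ]=\Omega(h^2d^2/n^2)$, which the paper gets by deletion: it subtracts the expected number of pairs of good intercalates sharing an entry, which is an $O(h/n)=o(1)$ fraction of the main term, and this is exactly where $h=o(n)$ is used.
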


We now show how to deduce \cref{lem:chains} from \cref{lem:many-intercalates}.

\begin{proof}[Proof of~\cref{lem:chains}]
For every choice of a $1\times n$ Latin rectangle $D$ and forbidden sets $\tilde{\cC},\tilde{\cS}$ of, respectively, columns and symbols, each of size at most $\log^{12}n$, we would like to show the existence of a sufficiently large $(\tilde{\cC},\tilde{\cS})$-avoiding row $n/\log^{100}{n}$-repair-system $(\cF, \cX, \cI)_D$. Roughly speaking, this entails finding a set of rows $\cF$ such that each row $r\in\cF$ has a set $\mc I(r)$ of at most two associated intercalates that can be used to create a short even-length cycle in $\sigma(D,L[r]).$
We will use a union bound over all possible $1\times n$ Latin rectangles $D$ and all possible choices for $\tilde{\cC},\tilde{\cS}$. For now, let us fix some $1\times n$ Latin rectangle $D$, forbidden set of columns $\tilde{\cC}$, and forbidden set of symbols $\tilde{\cS}$.

\medskip\noindent\textit{Strategy overview.} Let $w = n/\log^{200}{n}$ and consider the top $3w$ rows of $\bfL$, that is, $\cR_0 =[w]$, $\cR_1 = [2w]\setminus [w]$, and $\cR_2 = [3w] \setminus[2w]$. Set initially $\mc F=\varnothing$ and $\cX(r):= \varnothing$ for each $r\in \cR_0$. We will refer to the rows in $\cF$ as \emph{matched}. The rows in $\cR_0$ are our candidates for $r\in \mc F$: for each of them consider the cycle structure of $\sigma(D,L[r])$. Let us call a row cycle $(\tilde{\cC},\tilde{\cS})$-\emph{good} if it avoids $\tilde{\cC},\tilde{\cS}.$ We would like to show that we can use intercalate switches to create a short even-length $(\tilde{\cC},\tilde{\cS})$-good cycle in $\sigma(D,L[r])$ for most of the choices of $r\in\cR_0.$

Our strategy is to use intercalates with one row in $\cR_0$ and another row in $\cR_1 \cup \cR_2$ to change the cycle structure of $\sigma(D,L[r])$ for $r \in \cR_0$. For rows $r \in \cR_0$ such that $\sigma(D,L[r])$ does not already contain an even-length $(\tilde{\cC},\tilde{\cS})$-good cycle of length at most $n/\log^{100}{n}$, we distinguish two cases:
\begin{enumerate}
\item \label{case:long_cycle} there is at least one cycle of length at least $q:=n/\log^{101}{n}$, or 
\item \label{case:short_cycles} all cycles have length less than $q$.
\end{enumerate}
We refer to rows $r \in \cR_0$ as type~(\ref{case:long_cycle}) and type~(\ref{case:short_cycles}) depending on which case they fall into. For rows of type~(\ref{case:long_cycle}), in a first round we will \emph{excise} (recall \cref{fact:intercalate-switching}) a short $(\tilde{\cC},\tilde{\cS})$-good cycle from the cycle of length at least $q$ with the help of an intercalate intersecting $r$ and $\cR_1$. If that cycle happens to have odd length, in a second round we will further split it into two parts with the help of an intercalate intersecting $r$ and $\cR_2$ (so at least one part will have even length). For rows of type~(\ref{case:short_cycles}), one round will be sufficient---we will merge two short odd-length $(\tilde{\cC},\tilde{\cS})$-good cycles to form an even-length $(\tilde{\cC},\tilde{\cS})$-good cycle with the help of an intercalate intersecting $\cR_1$. At each step, we will add a row $r$ from $\cR_0$ to $\cF$ and create the corresponding set $\cX(r) \subseteq \cR_1 \cup \cR_2$. We will denote by $\widehat{\cR}_0\subseteq \cR_0$ the rows that require a second round. To begin with, we set $\widehat{\cR}_0:=\varnothing$. 

To show the existence of suitable intercalates as described above, we will use \cref{lem:many-intercalates} in both rounds. The Latin rectangle spanned by $\cR_0$ in the first round, respectively by $\cR_0 \cup \cR_1$ in the second round, will play the role of $W$ in \cref{lem:many-intercalates}. To apply \cref{lem:many-intercalates}, we first need to define the suitable $A^r_1, B^r_1$ for each $r\in \cR_0$ (which will play the role of $A_i, B_i$).

\medskip\noindent\textit{Set-up for round $1$.}
Let $\bfL\sim \unif(\cL_n)$ and let us reveal the entries in rows $\cR_0$, but no other entries. We go through the elements of $\cR_0$ one-by-one, so let $r \in \cR_0$. According to the previous plan, we distinguish a few cases based on how $\sigma(D,L[r])$ looks. If there already exists a short even-length $(\tilde{\cC},\tilde{\cS})$-good cycle, then we add $r$ to $\mc F$ and keep $\cX(r)=\varnothing.$ Otherwise, either $\sigma(D,L[r])$ contains at least one cycle of length at least $q= \frac{n}{\log^{101}n}$ (case (\ref{case:long_cycle})) or all cycles have length less than $q$ (case (\ref{case:short_cycles})).

In case (\ref{case:long_cycle}), even if all the forbidden columns and symbols $\tilde{\cC},\tilde{\cS}$ intersect the chosen long cycle of size at least $q$, by the pigeonhole principle there has to exist a continuous segment of this cycle of length at least $\frac{q-3\log^{12}n}{3\log^{12}n}\geq\frac{n}{\log^{114}n}$ that avoids $\tilde{\cC},\tilde{\cS}$. We select such a segment and select a subsegment of length exactly $\frac{n}{\log^{114}n}$. Let $A_1^r$ denote the first third of this subsegment and $B_1^r$ denote the last third.

In case (\ref{case:short_cycles}), there are no cycles of length at least $q$. That means there are at least $\frac{n}{q}=\log^{101}n$ cycles. As we have only at most $\log^{12}n$ forbidden columns in $\tilde{\cC}$ and at most $\log^{12}n$ forbidden symbols in $\tilde{\cS}$, there exist at least $\log^{101}n-2\log^{12}n\geq \log^{100}n$ cycles that are $(\tilde{\cC},\tilde{\cS})$-good and that together cover at least half of all the entries. As all of these cycles are short (of length at most $q=\frac{n}{\log^{101}n}$), they must all have odd length (otherwise $r$ was already matched). We split these odd-length cycles into two parts of roughly equal total number of entries and denote the union of the cycles in the first part (respectively the second part) by $A_1^r$ (respectively $B_1^r$).

After this, each $r\in \cR_0$ is either already matched (these rows are now in $\cF$) or has been assigned its $A^r_1,B^r_1$. Notice that for each $r\in \cR_0 \setminus \cF$, we always have $|A^r_1|,|B^r_1|\geq d:= \frac{n}{\log^{115}n}.$
If $|\cF|>\frac{n}{\log^{1000}n}$, then considering the so-far-constructed set $\cF$ already proves the lemma. Otherwise, we would like to use \cref{lem:many-intercalates} to find suitable intercalates that could be used to create short even-length $(\tilde{\cC},\tilde{\cS})$-good cycles in $\sigma(D,L[r])$. 

\medskip\noindent\textit{Round $1$.} We say that an intercalate is $(r,1)$-\emph{good} if it intersects each of $r,r',A^r_1,B^r_1$ for some row $r'\in \cR_1$. Let $\cR_0'\subseteq \cR_0 \setminus \cF$ and $\cR_1'\subseteq\cR_1$ be any subsets with $|\cR_0'|, |\cR_1'|\geq h:= \frac{n}{\log^{300}n}$. Then by \cref{lem:many-intercalates} with $\cR_0,\cR_0', \cR'_1$, and the previously constructed $A^r_1,B^r_1$ for rows $r\in \cR_0'$, with probability at least $1-\exp(-\Omega(h^4d^4/n^6))$, there exists an $(r,1)$-good intercalate intersecting a row in $\cR'_1$ and with $r\in \cR'_0$. As $\cR_0',\cR_1'$ were arbitrary subsets, by the union bound this holds for all pairs $\cR_0',\cR_1'$ with probability at least $1-2^{2w}\exp(-\Omega(h^4d^4/n^6))$ (as there are at most $2^w\cdot 2^w$ choices for $\cR_0',\cR_1'$). Let us condition on this event and reveal the outcome of~$\cR_1$.

Let $\cP_1$ be a maximal family of disjoint pairs of rows $(r_0,r_1)$ with $r_0 \in \cR_0 \setminus \cF$ and $r_1 \in \cR_1$ such that there is an $(r_0,1)$-good intercalate intersecting $r_1$. We claim that $|\cP_1|\geq w-|\cF|-h.$
Indeed, suppose this is not the case. Then there exist at least $h=n/\log^{300}n$ unmatched rows in $\cR_0$ and also in $\cR_1.$ So by the previous paragraph, there has to exist an $(r_0,1)$-good intercalate intersecting some $r_1\in \cR_1$ with $(r_0,r_1)$ disjoint from $\cP_1$, thus giving us a contradiction.

Next, for each pair $(r_0,r_1)\in\cP_1$, we add $r_1$ to $\cX(r_0)$. If $r_0$ was of type~(\ref{case:long_cycle}) and the found $(r_0,1)$-good intercalate would \emph{excise} an odd-length $(\tilde{\cC},\tilde{\cS})$-good cycle (see \cref{fact:intercalate-switching} for the effect of switching an intercalate on the cycle structure between two rows) we also add $r_0$ to $\widehat{\cR}_0$ and otherwise add $r_0$ to $\cF$. Now, if we already have enough matched rows (meaning $|\cF|\geq n/\log^{1000}n$), we are done with constructing the repair system---this happens if most of the rows $r\in \cR_0$ were of type~(\ref{case:short_cycles}), or of type~(\ref{case:long_cycle}) but the cycle we obtained in the first step had even length, or if they already contained a short even-length $(\tilde{\cC},\tilde{\cS})$-good  cycle in $\sigma(D,L[r])$ to begin with.

\medskip\noindent\textit{Round $2$.} Otherwise, $|\widehat{\cR}_0|\geq w-n/\log^{1000}n- h \geq w - 2n/\log^{300}{n}$, so we perform a second round where we further split the short odd-length cycles from the first round with $\widehat{\cR}_0$ in place of $\cR_0$. That is, for each $r\in \widehat{\cR}_0$, we know that there exists an intercalate $I_r$ intersecting $r$ and $\cR_1$ such that switching it would\footnote{Note that throughout this proof we actually don't perform any switches, we just keep track of how the cycle structure would change if we did.} create an odd-length $(\tilde{\cC},\tilde{\cS})$-good cycle of length at least$\frac{n}{\log^{115}n}$ and at most $\frac{n}{\log^{114}n}$. Let us divide the columns of this cycle that do not intersect $I_r$ (since we would like to find an intercalate disjoint from $I_r$) into two parts of roughly equal size and denote them by $A_2^r,B_2^r.$ We would again like to use \cref{lem:many-intercalates} to show that most of these cycles can be split with an intercalate intersecting $\cR_2.$ 
We say that an intercalate is $(r,2)$-\emph{good} if it intersects each of $r,r',A^r_2,B^r_2$ for some row $r'\in \cR_2$. So we again consider all pairs of subsets $\cR_0'\subseteq\widehat{\cR}_0$ and $\cR_2'\subseteq\cR_2$ with $|\cR_0'|,|\cR_2'|\geq h$. By the union bound and \cref{lem:many-intercalates}, with probability at least $1-2^{2w}\exp(-\Omega(h^4d^4/n^6))$, for all pairs $\cR_0',\cR_2'$, there exists an $(r,2)$-good intercalate intersecting a row in $\cR_2'$ and with $r \in \cR_0'$.

Similarly to before, let $\cP_2$ be a maximal family of disjoint pairs of rows $(r_0,r_2)$ with $r_i \in \cR_i$ for $i\in\{0,2\}$ such that there is an $(r_0,2)$-good intercalate intersecting $r_2$. By an analogous argument as in the first round, we have $|\cP_2|\geq |\widehat{\cR}_0|-h.$

Next, for each pair $(r_0,r_2)\in\cP_2$, we have its corresponding $(r_0,2)$-good intercalate. This intercalate would split the previously-created short odd-length $(\tilde{\cC},\tilde{\cS})$-good cycle into two cycles, so that one of them has even length. We add $r_2$ to $\cX(r_0)$ and also add $r_0$ to $\cF$.

\medskip\noindent\textit{Failure probability.} For a fixed choice of $D, \tilde{\cC},$ and $\tilde{\cS}$, we invoke \cref{lem:many-intercalates} $2^{2w+1}$ many times. By the union bound, all these applications are successful with probability at least $1-2^{2w+1}\exp(-\Omega(h^4d^4/n^6))$. In all such cases, we construct a set $\cF\subseteq\cR_0$ of $|\cF|\geq n/\log^{1000}n$ rows $r$ with their corresponding $\cX(r)$ for which we can perform at most two intercalate switches such that $\sigma(D,L[r])$ has a small even-length cycle. We also need to take the union bound over all $n!= 2^{\Theta(n\log n)}$ possible $1\times n$ Latin rectangles $D$ as well as over the $\binom{n}{\log^{12}n}^2= 2^{\Theta(\log^{13}n)}$ possibilities for $\tilde{\cC},\tilde{\cS}$. In total, our argument fails with probability at most 
\[2^{1+2w+\Theta(n\log n)+\Theta(\log^{13}n)}\exp(-\Omega(n^2/\log^{1660} n))\leq \exp(-\omega(n^2/\log^{2000} n)). \qedhere\]
\end{proof}

\begin{figure}[!hbt]
\centering
\includegraphics{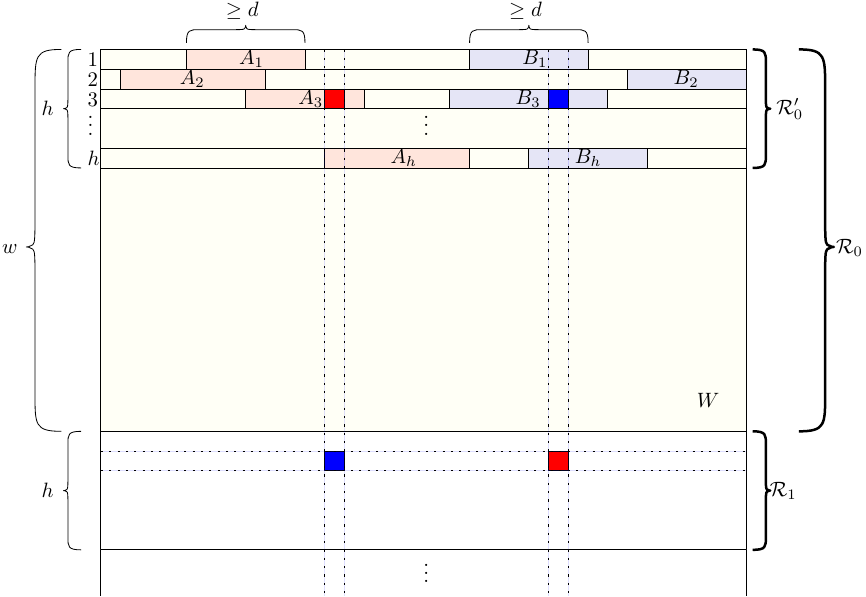}
\caption{The set-up of \cref{lem:many-intercalates}: there is always an intercalate intersecting a row $i\in \cR_0'$, a column in $A_i$, a column in $B_i$, and a row in $\cR_1$. Note that $A_i$ and $B_i$ for $i\in[h]$ are each represented as contiguous segments for illustrative purposes only.}
\label{fig:lemma-6.4}
\end{figure}

Finally, we prove \cref{lem:many-intercalates} with the help of \cref{lem:LS-to-hypergraphs} (the corollary of our approximation lemma). This allows us to work in the binomial random hypergraph model $\mathbb{G}^{(3)}(W,\alpha/(n-w))$, where we deduce the conclusion via a concentration inequality. The approach is based on the proof of \cite[Lemma 10.2]{kwan2025parities}. The statement of \cref{lem:many-intercalates} is illustrated in \cref{fig:lemma-6.4}.

\begin{proof}[Proof of \cref{lem:many-intercalates}]
We can assume without loss of generality that $|\cR_0'|=|\cR_1|=h$. We define an $(i,j,A_i,B_i)$-intercalate to be an intercalate that intersects rows $i$ and $j$ and has one column in each of $A_i$ and $B_i$. Let $\Tbad\subseteq \cL^{\overline{w}}_n(W)$ be the set of Latin rectangles $P$ which do not conflict with $W$ such that there is no $(i,j,A_i,B_i)$-intercalate with $i\in \cR'_0$ and $j\in \cR_1$ in $W \cup P$. Similarly, let $\Ubad\subseteq \cP^W$ be the set of partial Latin rectangles $P$ such that $W \cup P$ has no intercalate of this form.

For an $n\times n$ Latin square $L$ and integers $x, y$ with $1 \leq x\leq y\leq n$, let $L[x,y]$ be the Latin rectangle obtained by restricting to rows $x, x+1, \dots, y$ of $L$. We would like to bound the probability $\Pr[\bfL[w+1,n] \in \Tbad].$ Our strategy will be to consider the corresponding event in the binomial random hypergraph model $\mathbb{G}^{(3)}(W, \alpha/(n-w))$ for some constant $\alpha \in (0,1/2)$. Let $\bfG \sim \mathbb{G}^{(3)}(W, \alpha/(n-w))$ and let $\bfG^*$ be obtained from $\bfG$ by deleting all hyperedges involved in conflicts, as in \cref{lem:LS-to-hypergraphs}.
 
\begin{claim}
\label{clm:bound-ubad}
$\Pr[\bfG^*\in \Ubad]\leq \exp(-\Omega(h^4d^4/n^6)).$    
\end{claim}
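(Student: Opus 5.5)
We need to bound $\Pr[\bfG^*\in \Ubad]$, where $\bfG\sim\mathbb{G}^{(3)}(W,p)$ with $p=\alpha/(n-w)$. The event $\bfG^*\in\Ubad$ says that $W\cup\bfG^*$ contains no $(i,j,A_i,B_i)$-intercalate with $i\in\cR_0'$ and $j\in\cR_1$. The plan is to count a structured family of such intercalates, show that the count has large mean, and apply \cref{freedman_inequality} to get a lower tail bound.

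\textbf{Which intercalates to count.} Fix $i\in\cR_0'$, $j\in\cR_1$, columns $c_1\in A_i$, $c_2\in B_i$. Row $i$ is in $W$, so the symbols $s_1=W(i,c_1)$ and $s_2=W(i,c_2)$ are determined. An $(i,j,A_i,B_i)$-intercalate on these rows and columns exists exactly when $j$ has symbol $s_2$ at $c_1$ and $s_1$ at $c_2$. So it is present in $W\cup\bfG^*$ when:
\begin{itemize}
\item the two hyperedges $(j,c_1,s_2)$ and $(j,c_2,s_1)$ are potential hyperedges of $G^{W,(3)}$, which holds provided neither symbol already appears in those columns of $W$;
\item both hyperedges lie in $\bfG$;
\item neither hyperedge is in conflict with another hyperedge of $\bfG$.
\end{itemize}
Let $\bfX$ be the number of such quadruples $(i,j,c_1,c_2)$ whose two hyperedges survive in $\bfG^*$. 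Then $\bfX>0$ already rules out $\Ubad$.

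\textbf{The mean of $\bfX$.} Column $c_1$ contains only $w$ symbols of $W$, and $d\ge 3w$. So for each $i$ and $c_1$, discarding the $c_2\in B_i$ whose symbol $s_2$ lies in column $c_1$ of $W$ (and symmetrically) still leaves $\Omega(d^2)$ valid pairs $(c_1,c_2)$. Each required hyperedge is present with probability $p=\Theta(1/n)$. It is conflict-free with probability $(1-p)^{O(n)}=\Omega(1)$, and this holds jointly for both hyperedges, since they share only the row vertex $j$ and involve distinct column/symbol pairs. Hence $\E\bfX=\Omega(h^2d^2/n^2)$.

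\textbf{Concentration.} $\bfX$ is a function of the independent indicators of hyperedges of $G^{W,(3)}$, and I bound the effect of changing one hyperedge $e=(j,c,s)$ with $j\in\cR_1$.
\begin{itemize}
\item As a participant, $e$ determines $j$, $c_1=c$ and $s_2=s$. For each $i$ this fixes $c_2$ as the column where row $i$ has $s_2$, giving at most $h$ quadruples; by the symmetric role of $e$, at most $2h$ in total.
\item As a conflicting hyperedge, $e$ can destroy quadruples whose hyperedges share two vertices with $e$. The counts through each pair $(j,c)$, $(j,s)$, $(c,s)$ are each $O(h)$ by the same argument.
\end{itemize}
Hence $K_i=O(h)$. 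Moreover $\sum_i p_iK_i^2\lesssim p\cdot(\#\text{relevant hyperedges})\cdot h^2$. A naive count of relevant hyperedges gives $hn^2$, which makes the variance proxy too large. Instead, each hyperedge has $K_e\le$ the number of quadruples it affects, and $\sum_e K_e=O(\E\bfX/p)$. This yields $\sum_e p_eK_e^2\le pKh\cdot\sum_e K_e/h$, that is, $O(h\,\E\bfX)$.

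Applying \cref{freedman_inequality} with $t=\E\bfX/2$ then gives an exponent of order $(\E\bfX)^2/(h\,\E\bfX)=\E\bfX/h=h d^2/n^2$. This does not match the target $h^4d^4/n^6=(\E\bfX)^2/n^2$.

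\textbf{Main obstacle.} The target exponent $(\E\bfX)^2/n^2$ corresponds to a variance proxy of order $n^2$. To reach it I would sharpen the Lipschitz analysis, in the spirit of \cite[Lemma 10.2]{kwan2025parities}. First, bound $K_e$ by the actual number of $(i,c_2)$ compatible with $e$, which on average over $e$ is $O(h d^2/n^2)$ rather than $h$. Second, handle the conflict contributions by restricting to a sub-count over a typical event, or by using that $p_e$-weighted sums of squares are governed by the number of pairs of quadruples sharing a hyperedge. I would estimate that second-moment count directly as $O(\E\bfX^2\cdot n/(hd)+\E\bfX)$-type terms. Combined with the hypothesis $h^4d^4=\omega(n^7\log^2 n)$, this should yield a variance proxy of order $n^2$ and hence the claimed bound $\exp(-\Omega(h^4d^4/n^6))$. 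This variance bookkeeping is the step I expect to be hardest.
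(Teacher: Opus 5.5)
Your set-up and your lower bound on the mean are fine. The gap is in the concentration step, and it cannot be fixed by better bookkeeping: with the raw count $\bfX$, \cref{freedman_inequality} cannot produce the claimed exponent. That inequality uses worst-case Lipschitz constants. For $\bfX$, the constant of a hyperedge $e$ is at least a constant fraction of the number of valid quadruples using $e$, because the partner hyperedges of those quadruples can all be present and conflict-free simultaneously.

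Write $N_Q$ for the number of valid quadruples. Then $N_Q\ge h\cdot hd^2/3$ and $\E\bfX=\Theta(N_Qp^2)$. So your bound $\sum_e K_e=O(\E\bfX/p)$ is off by a factor of order $n$: in fact $\sum_e K_e\ge 2N_Q=\Theta(\E\bfX/p^2)$. Only at most $hn^2$ hyperedges are involved, so by Cauchy--Schwarz $\sum_e pK_e^2=\Omega(pN_Q^2/(hn^2))$, however you refine the individual $K_e$. With $t=\Theta(\E\bfX)$ the exponent is therefore at most $O\big((N_Qp^2)^2\cdot hn^2/(pN_Q^2)\big)=O(h/n)=o(1)$. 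Averaging the $K_e$ over $e$ cannot help, and a second-moment (Chebyshev) estimate only gives a polynomially small failure probability. The claim needs an exponent that beats the $\exp(O(n\log^2 n))$ loss in \cref{lem:LS-to-hypergraphs}, so your final ``should yield'' step is unsupported.

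The missing idea is to change the statistic so that it is $O(1)$-Lipschitz. Let $\bfZ$ be the maximum size of a family of pairwise disjoint good intercalates in $W\cup\bfG^*$.
\begin{itemize}
\item Toggling one hyperedge of $\bfG$ changes $\bfG^*$ by at most three hyperedges, since for each pair of vertices of the toggled hyperedge at most one hyperedge of $\bfG^*$ contains that pair.
\item Each hyperedge lies in at most one member of a disjoint family, so $|\Delta\bfZ|\le 3$.
\item There are at most $n^3$ potential hyperedges, each present with probability $\alpha/(n-w)$. So the variance proxy is $O(n^2)$ and $Kt=O(\E\bfZ)=O(n^2)$, and \cref{freedman_inequality} gives $\Pr[\bfZ=0]\le\exp(-\Omega((\E\bfZ)^2/n^2))$. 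This is exactly the $n^2$ variance proxy you identified as necessary.
\end{itemize}
For the mean, bound $\E\bfZ$ below by the expected number of good intercalates minus the expected number of pairs of good intercalates sharing an entry. Let $X$ be the number of half-intercalates. Pairs meeting inside $\cR'_0$ contribute $O(Xnh^2p^4)$, and pairs meeting inside $\cR_1$ contribute $O(Xh^2p^3)$. Each is an $O(h/n)=o(1)$ fraction of the main term $\Theta(Xhp^2)$. This is where $h=o(n)$ is needed, a hypothesis your argument never uses. Combined with your count of good intercalates, this gives $\E\bfZ=\Omega(h^2d^2/n^2)$ and hence the bound $\exp(-\Omega(h^4d^4/n^6))$.
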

Before proving the claim, let us first show how to finish the proof assuming that it holds. Firstly, $\Ubad$ is $(1,\alpha n(n-w))$-inherited from $\Tbad$ because if a partial Latin square does not contain an $(i,j,A_i,B_i)$-intercalate for $i\in\cR'_0, j \in \cR_1$, then a subset of its entries also does not. Thus, by \cref{lem:LS-to-hypergraphs}, we have
\[
\Pr[\bfL[w+1,n]\in \Tbad]\leq \exp(O(n\log^2n))\Pr[\bfG^* \in \Ubad]\leq \exp(-\Omega(h^4d^4/n^6)),
\]
where we used that $h^4d^4 = \omega(n^7\log^2{n})$.
So it is sufficient to prove \cref{clm:bound-ubad}.
\begin{claimproof}
We say that an intercalate is \emph{good} if its two rows $i$ and $j$ satisfy $i\in \cR'_0$ and $j\in \cR_1$, and if it has a column in $A_i$ and a column in $B_i$. 
Let $\bfZ$ be the size of a maximum family of disjoint good intercalates.
We first bound the number $X$ of half-intercalates, which we define as the pairs $(r,c_1,s_1),(r,c_2,s_2) \in W$ such that $r\in \cR'_0$, $c_1 \in A_r, c_2 \in B_r$ and $s_1$ does not appear in column $c_2$ in $W$ and $s_2$ does not appear in column $c_1$ in $W$.

For each $r \in \cR'_0$, there are $|A_r||B_r|$ many pairs of entries $(r,c_1,s_1),(r,c_2,s_2) \in W$ with $c_1\in A_r$ and $c_2 \in B_r$. At most $(|A_r|+|B_r|)w$ of them are not half-intercalates, since for each entry $(r,c_1,s_1)$ with $c_1 \in A_r$, each symbol in column $c_1$ of $W$ can prevent at most one pair of entries containing $(r,c_1,s_1)$ from being a half-intercalate (and similarly for entries $(r,c_2,s_2)$ with $c_2\in B_r$). Thus,
\[X \geq \sum_{r\in\cR'_0} \big(|A_r||B_r| - (|A_r|+ |B_r|)w \big)\geq \sum_{r\in\cR'_0} \big(d^2-2wd \big) \geq h d^2/3,\]
where we used that $d/3\geq w$ and that $|A_r|,|B_r| \geq d$. We then have
\begin{align*}
\E[\bfZ]&\geq X h (\alpha/(n-w))^2(1-\alpha/(n-w))^{6n-6}-Xnh^2(\alpha/(n-w))^4-Xh^2(\alpha/(n-w))^3\\
& = \Omega\left( \frac{h^2d^2}{n^2} \right).
\end{align*}
The first term in this lower bound on $\mathbb{E}[\mathbf{Z}]$ bounds from below the expected number of good intercalates in $W \cup \bfG^*$ (the multiplicative term $(1-\alpha/(n-w))^{6n-6}$ ensures the two entries in $\cR_1$ are not involved in conflicts). The second term bounds from above the number of pairs of intercalates intersecting in an entry inside $\cR'_0$ (since for each half-intercalate, we have $n$ choices for the other column of the second intercalate and $h^2$ choices for the rows in $\cR_1$, see \cref{fig:a}). The third term bounds from above the number of pairs of intercalates intersecting in an entry inside $\cR_1$ (since for each half-intercalate, we have $h$ choices for the common row in $\cR_1$ and $h$ choices for the other row of the second intercalate; the other column of the second intercalate is then determined uniquely by these choices due to the Latin property of $W$, see \cref{fig:b}). 
\begin{figure}[!hbt]
\label{fig:intersecting_intercalates}
    \centering
    \begin{subfigure}{.5\textwidth}
      \centering
      \includegraphics{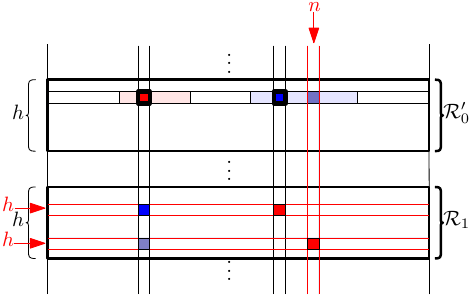}
      \caption{Two intercalates intersecting inside $\cR'_0$.}
      \label{fig:a}
    \end{subfigure}
    ~ 
    \begin{subfigure}{.5\textwidth}
      \centering
      \includegraphics{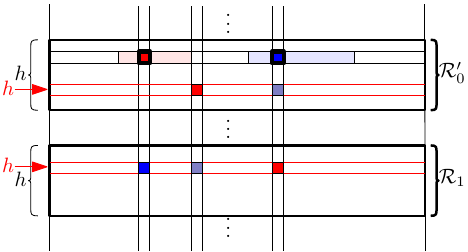}
      \caption{Two intercalates intersecting inside $\cR_1.$}
      \label{fig:b}
    \end{subfigure}\\
    \caption{The two different ways in which a pair of intercalates of interest can intersect.}
\end{figure}

Because we are considering families of disjoint intercalates, the presence of any particular hyperedge in $\bfG$ can affect $\bfZ$ by at most 3. The total number of potential hyperedges is $n(n-|\cR_0|)^2\leq n^3$. Thus, by \cref{freedman_inequality}, we have $\Pr[\bfZ\geq1]\geq 1-\exp(-\Omega(h^4d^4/n^6)).$
\end{claimproof}
Hence, $\Pr[\bfL[w+1,n]\in \Tbad]\leq \exp(-\Omega(h^4d^4/n^6))$, as desired.
\end{proof}

\bibliographystyle{amsplain_initials_nobysame_nomr}
\bibliography{bibliography}

\providecommand{\bysame}{\leavevmode\hbox to3em{\hrulefill}\thinspace}
\providecommand{\MR}{\relax\ifhmode\unskip\space\fi MR }
\providecommand{\MRhref}[2]{%
  \href{http://www.ams.org/mathscinet-getitem?mr=#1}{#2}
}
\providecommand{\href}[2]{#2}
\begin{thebibliography}{10}

\bibitem{cameron1976parallelisms}
P.~J. Cameron, \emph{Parallelisms of complete designs}, London Mathematical Society Lecture Note Series, vol. No. 23, Cambridge University Press, Cambridge-New York-Melbourne, 1976.

\bibitem{Cam01}
P.~J. Cameron, \emph{{BCC} problem list}, \url{https://webspace.maths.qmul.ac.uk/p.j.cameron/bcc/allprobs.pdf}, 2001.

\bibitem{CW16}
N.~J. Cavenagh and I.~M. Wanless, \emph{There are asymptotically the same number of {L}atin squares of each parity}, Bull. Aust. Math. Soc. \textbf{94} (2016), no.~2, 187--194.

\bibitem{MR2239987}
T.~M. Cover and J.~A. Thomas, \emph{Elements of information theory}, second ed., Wiley-Interscience [John Wiley \& Sons], Hoboken, NJ, 2006.

\bibitem{Ego81}
G.~P. Egorychev, \emph{The solution of van der {W}aerden's problem for permanents}, Adv. in Math. \textbf{42} (1981), no.~3, 299--305.

\bibitem{Fal81}
D.~I. Falikman, \emph{Proof of the van der {W}aerden conjecture on the permanent of a doubly stochastic matrix}, Mat. Zametki \textbf{29} (1981), no.~6, 931--938, 957.

\bibitem{Fre75}
D.~A. Freedman, \emph{On tail probabilities for martingales}, Ann. Probability \textbf{3} (1975), 100--118.

\bibitem{FM19}
B.~Friedman and S.~McGuinness, \emph{The {A}lon-{T}arsi conjecture: a perspective on the main results}, Discrete Math. \textbf{342} (2019), no.~8, 2234--2253.

\bibitem{HJ96}
R.~H\"aggkvist and J.~C.~M. Janssen, \emph{All-even {L}atin squares}, Proceedings of the 6th {C}onference on {F}ormal {P}ower {S}eries and {A}lgebraic {C}ombinatorics ({N}ew {B}runswick, {NJ}, 1994), vol. 157, 1996, pp.~199--206.

\bibitem{janson2011random}
S.~Janson, T.~{\L}uczak, and A.~Rucinski, \emph{Random {G}raphs}, Wiley-Interscience Series in Discrete Mathematics and Optimization, Wiley-Interscience, New York, 2000.

\bibitem{Jan95}
J.~C.~M. Janssen, \emph{On even and odd {L}atin squares}, J. Combin. Theory Ser. A \textbf{69} (1995), no.~1, 173--181.

\bibitem{KD15}
A.~D. Keedwell and J.~D\'{e}nes, \emph{Latin squares and their applications}, second ed., Elsevier/North-Holland, Amsterdam, 2015, With a foreword to the previous edition by Paul Erd\H{o}s.

\bibitem{kwan2020almost}
M.~Kwan, \emph{Almost all {S}teiner triple systems have perfect matchings}, Proc. Lond. Math. Soc. (3) \textbf{121} (2020), no.~6, 1468--1495.

\bibitem{kwan2025parities}
M.~Kwan, K.~Petrova, and M.~Sawhney, \emph{Parities in random {L}atin squares}, arXiv preprint arXiv:2509.13125 (2025).

\bibitem{KSS21}
M.~Kwan, A.~Sah, and M.~Sawhney, \emph{Note on random {L}atin squares and the triangle removal process}, arXiv preprint arXiv:2109.15201 (2021).

\bibitem{KSSS22}
M.~Kwan, A.~Sah, and M.~Sawhney, \emph{Large deviations in random {L}atin squares}, Bull. Lond. Math. Soc. \textbf{54} (2022), no.~4, 1420--1438.

\bibitem{McD89}
C.~McDiarmid, \emph{On the method of bounded differences}, Surveys in {C}ombinatorics, 1989 ({N}orwich, 1989), London Math. Soc. Lecture Note Ser., vol. 141, Cambridge Univ. Press, Cambridge, 1989, pp.~148--188.

\bibitem{samotij2026entropy}
W.~Samotij, \emph{Entropy methods in combinatorics}, International Congress of Mathematicians 2026, SIAM, 2026, pp.~253--273.

\bibitem{warnke2016method}
L.~Warnke, \emph{On the method of typical bounded differences}, Combinatorics, Probability and Computing \textbf{25} (2016), no.~2, 269--299.

\bibitem{Zap96}
P.~Zappa, \emph{Triplets of {L}atin squares}, Boll. Un. Mat. Ital. A (7) \textbf{10} (1996), no.~1, 63--69.

\end{thebibliography}

\end{document}